\newtheorem{theorem}{Theorem}    
\newtheorem{proposition}[theorem]{Proposition}
\newtheorem{lemma}[theorem]{Lemma}
\newtheorem{remark}[theorem]{Remark}
\newtheorem{definition}[theorem]{Definition}
\theoremstyle{definition}
\numberwithin{theorem}{section} \numberwithin{theorem}{section}
\numberwithin{equation}{section}
\def\Rn{{\mathbb{R}^n}}
\def\supp{\operatorname{supp}}
\def\essi{\operatornamewithlimits{ess\,inf}}
\def\esss{\operatornamewithlimits{ess\,sup}}
\begin{document}
\title[Weighted variational inequalities for heat semigroups]
{Weighted variational inequalities for heat semigroups associated with Schr\"odinger operators related to critical radius functions}

\author{Yongming Wen$^{\ast}$ and Huoxiong Wu}

\subjclass[2020]{
42B20; 42B25; 35J10.
}

%
\keywords{variation operators, heat semigroups, maximal operators, quantitative weighted bounds, two-weight inequalities, mixed weak type estimates, restricted weak type inequalities.}
\thanks{$^*$Corresponding author.}
\thanks{Yongming Wen is supported by the National Natural Science Foundation of China (Grant No. 12301119), Fujian Alliance Of Mathematics (No. 2025SXLMQN07), President’s fund of Minnan Normal University (No. KJ2020020), Institute of Meteorological Big Data-Digital Fujian, Fujian Key Laboratory of Data Science and Statistics, Fujian Key Laboratory of Granular Computing and Applications (Minnan Normal University), China. Huoxiong Wu is supported by the National Natural Science Foundation of China (Grant Nos. 12171399, 12271041).}
\address{School of Mathematics and Statistics, Minnan Normal University, Zhangzhou 363000, China} \email{wenyongmingxmu@163.com}
\address{School of Mathematical Sciences, Xiamen University, Xiamen 361005, China} \email{huoxwu@xmu.edu.cn}



\begin{abstract}
Let $\mathcal{L}$ be a Schr\"{o}dinger operator and $\mathcal{V}_\varrho(e^{-t\mathcal{L}})$ be the variation operator of heat semigroup associated to $\mathcal{L}$ with $\varrho>2$. In this paper, we first obtain the quantitative weighted $L^p$ bounds for $\mathcal{V}_\varrho(e^{-t\mathcal{L}})$ with a class of weights related to critical radius functions, which contains the classical Muckenhoupt weights as a proper subset. Next, a new bump condition, which is weaker than the classical bump condition, 
is given for two-weight inequality of $\mathcal{V}_\varrho(e^{-t\mathcal{L}})$, and the weighted mixed weak type inequality corresponding to Sawyer's conjecture for $\mathcal{V}_\varrho(e^{-t\mathcal{L}})$ are obtained. Furthermore, the quantitative restricted weak type $(p,p)$ bounds for $\mathcal{V}_\varrho(e^{-t\mathcal{L}})$ are also given with a new class of weights $A_{p}^{\rho,\theta,\mathcal{R}}$, which is larger than the classical $A_{p}^{\mathcal{R}}$ weights. 
Meanwhile, several characterizations of $A_{p,q,\alpha}^{\rho,\theta,\mathcal{R}}$ in terms of restricted weak type estimates of maximal operators are established.
\end{abstract}

\maketitle

\section{Introduction and main results}
\subsection{Background}

Let $\{T_t\}_{t>0}$ be a family of operator bounded on some function spaces such that $\lim_{t\rightarrow0}T_tf$ exists in some sense. Variation operator is a substitute for a square function of the type
$$\Big(\sum_{i=1}^\infty|T_{t_i}f-T_{t_{i+1}}f|^2\Big)^{1/2}$$
to measure the speed of convergence of $\{T_t\}_{t>0}$. The study of variation operator was initiated by L\'{e}pengle \cite{Le}, and developed by Bourgain \cite{Bo}, who established variational estimates for ergodic averages of a dynamic system. Since then, the study of variation operator was taken into consideration in harmonic analysis, ergodic theory and probability and so on (for example, one can consult \cite{AJS,Jo,JR} etc.). Particularly, the breakthrough work of variation operators for singular integrals was established in \cite{CaJRW1,CaJRW2}, in which the authors gave the $L^p$-bounds and weak type $(1,1)$ bounds for variation of truncated singular integrals. For further results on variation operators, see \cite{BetFHR,BeHuWuYa,CheDHL,CheDHL2,CheHo,CheHoLi,CrMacMTV,DHL,DuLY,GT,HyLP,LW,MaTX1,MaTX2}, etc.

In this paper, we will focus on the variations of heat semigroups associated to differential operators. Given an operator $\mathcal{L}$, the $\varrho$-variation operators for heat semigroup associated with $\mathcal{L}$ is defined by
\begin{equation*}
\mathcal{V}_\varrho(e^{-t\mathcal{L}})f(x):=\sup_{\{t_j\}\downarrow0}
\Big(\sum_{j=1}^\infty|e^{-t_j\mathcal{L}}f(x)-e^{-t_{j+1}\mathcal{L}}f(x)|^\varrho\Big)^{1/\varrho},
~\varrho>2,
\end{equation*}
where the supreme is taken over all decreasing sequences $\{t_j\}$ that converge to 0. When $\mathcal{L}=-\triangle$, the Laplacian operator, Jones and Reinhold \cite{JR} and Crescimbeni et al. \cite{CrMacMTV} independently obtained weak type (1,1) bounds and $L^p$-bounds for $\mathcal{V}_{\varrho}({e^{-t\triangle}})$. The results in \cite{CrMacMTV,JR} were generalized to variation operators associated with approximate identities by Liu \cite{Liu}. Recently, Guo et al. \cite{GuWWY} extended the results of Liu \cite{Liu} to the weighted cases. When $\mathcal{L}=-\Delta+V$ is the Schr\"{o}dinger operator defined on $\mathbb{R}^n$ with $n\geq3$, where $V$ is a non-negative potential that belongs to $RH_s$, the reverse H\"older class, that is, there is a constant $C>0$ such that
$$\Big(\frac{1}{|B|}\int_BV(y)^sdy\Big)^{1/s}\leq\frac{C}{|B|}\int_BV(y)dy,~ s>n/2,$$
holds for each ball $B\subseteq \mathbb{R}^n$, Betancor et al. \cite{BetFHR} established weak type $(1,1)$ bounds and $L^p$-bounds for $\mathcal{V}_\varrho({e^{-t\mathcal{L}}})$; Tang and Zhang \cite{TZ} and Bui \cite{Bui} generalized the results of Betancor et al. in \cite{BetFHR} to the weighted cases and metric space, respectively; moreover, the current authors in \cite{WW} established quantitative weighted estimates for $\mathcal{V}_{\varrho}({e^{-t\mathcal{L}}})$ with weights adapted to Schr\"odinger setting, which contain the classical Muckenhoupt class as a proper subset (see subsection 2.2). Recently, by sparse dominations, Cao et al. \cite{CaSiZh} established several kinds of weighted inequalities for square functions associated to operators, which include the two-weight inequalities with bump conditions, Fefferman-Stein inequalities with arbitrary weights, the mixed weak type estimates corresponding to Sawyer's conjecture with the classical Muckenhoupt weights, and so on. Base on the above, it is natural to ask the following question.

\textbf{Question 1.} Can we obtain weighted inequalities for $\mathcal{V}_{\varrho}({e^{-t\mathcal{L}}})$ with weights adapted to Schr\"{o}dinger setting via sparse dominations, moreover, for the two-weight inequalities including bump conditions? for the mixed weak type estimates corresponding to Sawyer's conjecture with weights adapted to Schr\"{o}dinger setting?

Our first result is concerned with quantitative weighted estimates for $\mathcal{V}_{\varrho}({e^{-t\mathcal{L}}})$.
\begin{theorem}\label{quantitative Ap}
Let $n\geq3$, $V\in RH_{s}$ with $s>n/2$, and $\mathcal{L}=-\triangle+V$. Let $\varrho>2$, $\theta\geq0$ and $\rho$ be a critical radius function. Then for $1<p<\infty$ and $\omega\in A_p^{\rho,\theta}$,
\begin{equation*}
\|\mathcal{V}_\varrho(e^{-t\mathcal{L}})f\|_{L^p(\omega)}
\lesssim[\omega]_{A_p^{\rho,\theta}}^{\max\{1,1/(p-1)\}}\|f\|_{L^p(\omega)},
\end{equation*}
where $A_p^{\rho,\theta}$ is the weights class related to the critical radius function $\rho(x)$, which contains $A_p$, the classical Muckenhoupt class, as a proper subset (see subsection 2.2).
\end{theorem}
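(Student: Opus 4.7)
The plan is to reduce the quantitative weighted estimate to a pointwise sparse domination of $\mathcal{V}_\varrho(e^{-t\mathcal{L}})$ adapted to the Schr\"odinger setting, and then apply a sharp $A_p^{\rho,\theta}$ bound for the resulting Schr\"odinger-type sparse operator. Concretely, I would first establish a bound of the form
\begin{equation*}
\mathcal{V}_\varrho(e^{-t\mathcal{L}})f(x) \lesssim \sum_{Q\in\mathcal{S}} \bigl(1+r_Q/\rho(x_Q)\bigr)^{\theta_0}\,\langle |f|\rangle_{Q}\,\chi_Q(x),
\end{equation*}
where $\mathcal{S}$ is a sparse collection of cubes adapted to the critical radius dyadic grid and $\theta_0$ depends only on $n$ and the $RH_s$-constant of $V$. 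Combining this sparse bound with a sharp $L^p(\omega)$ estimate for the Schr\"odinger-type sparse form on the right will immediately yield the desired quantitative inequality.

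To prove the sparse bound I would split $\mathcal{V}_\varrho(e^{-t\mathcal{L}})$ into a local piece (times $t\le \rho(x)^2$) and a global piece. For the local piece, the heat kernel of $\mathcal{L}$ enjoys the same pointwise Gaussian bounds as the Laplacian heat kernel, so the sparse domination follows from a standard Lerner--Nazarov stopping-time argument once one invokes the weak type $(1,1)$ and $L^p$-boundedness of $\mathcal{V}_\varrho(e^{-t\mathcal{L}})$ already established in \cite{BetFHR}. For the global piece, the Duhamel-type perturbation identity
\begin{equation*}
e^{-t\mathcal{L}} = e^{t\triangle} - \int_0^t e^{-(t-s)\mathcal{L}} V e^{s\triangle}\,ds
\end{equation*}
combined with the Dziuba\'nski--Zienkiewicz estimates for $V\in RH_s$ furnishes an extra factor $(1+\sqrt{t}/\rho)^{-N}$ for any prescribed $N$. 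This decay produces precisely the inflation factor $(1+r_Q/\rho(x_Q))^{\theta_0}$ at each generation of stopping-time cubes, so the resulting cubes lie in the $\rho$-adapted grid with controlled growth.

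The quantitative weighted estimate for the sparse operator on the right is then obtained by a dualized principal-cube argument: the extra factor $(1+r_Q/\rho(x_Q))^{\theta_0}$ is absorbed by the localization exponent $\theta$ in the definition of $A_p^{\rho,\theta}$, and the sharp dependence $[\omega]_{A_p^{\rho,\theta}}^{\max\{1,1/(p-1)\}}$ is extracted by a Hyt\"onen--Moen type testing argument, exactly parallel to the classical proof of the $A_2$ conjecture. The main obstacle will be the first step: obtaining the sparse bound with the correct inflation factor \emph{uniformly} in the decreasing sequences $\{t_j\}$ defining the supremum in $\mathcal{V}_\varrho$. One must control the $\varrho$-variation of the tail simultaneously for all admissible sequences by a single off-diagonal $L^1\to L^\infty$ estimate, and this uniformity is delicate because the variation supremum does not commute with naive kernel decompositions. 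Once this uniform tail bound is secured, the remaining stopping-time and weighted arguments are routine adaptations of now-classical techniques.
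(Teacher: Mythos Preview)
Your proposal has the direction of the critical-radius factor backwards. You write that the sparse domination should carry an \emph{inflation} factor $(1+r_Q/\rho(x_Q))^{\theta_0}$ with $\theta_0>0$, and that this growth is then ``absorbed by the localization exponent~$\theta$'' in $A_p^{\rho,\theta}$. But the class $A_p^{\rho,\theta}$ is \emph{larger} than $A_p$: the factor $\psi_\theta(Q)^{-1}$ in Definition~\ref{def2.1} weakens the constraint on~$\omega$ as~$\theta$ grows, so to obtain boundedness on this larger weight class the sparse operator must carry additional \emph{decay}, not growth. The correct model is $\mathcal{A}_{\mathcal{S}}^{\rho,N}f=\sum_{Q\in\mathcal{S}}\langle|f|\rangle_{3Q}\,\psi_N(Q)^{-1}\chi_Q$ with $N\ge\theta\max\{1,1/(p-1)\}$, whose sharp bound is Lemma~\ref{weighted estimates for sparse operator}. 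A \emph{global} pointwise domination of $\mathcal{V}_\varrho(e^{-t\mathcal{L}})$ by such a decaying sparse form is strictly stronger than the standard one, and your outline does not explain how to get it: the kernel decay $(1+\sqrt{t}/\rho)^{-N}$ helps at super-critical scales but gives nothing on the sub-critical cubes where the stopping-time selection actually happens.

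The paper avoids this by a different decomposition. Rather than splitting the variation in~$t$ relative to $\rho(x)^2$, it splits~$f$ spatially via a covering $\{B_j=B(x_j,\rho(x_j))\}$ by critical balls (Lemma~\ref{covering lemma}) and associated dyadic cubes~$Q_j$. On each~$B_j$ one writes $f=f\chi_{3Q_j}+f\chi_{(3Q_j)^c}$. The far piece is dominated pointwise by $M^{\rho,\sigma}f$ directly from the kernel bound~\eqref{upper bound} (using the trivial $\ell^1$ majorant $\mathcal{V}_\varrho\le\int_0^\infty|t\mathcal{L}e^{-t\mathcal{L}}(\cdot)|\,dt/t$, which disposes of the ``supremum over sequences'' issue you flag) and is then handled by Lemma~\ref{quantitative weighted bounds for Hardy-Littlewood maximal operators}. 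The near piece is treated by Lerner's abstract principle (Proposition~\ref{principle}), which requires the weak~$(1,1)$ bound for the grand maximal operator $\mathcal{M}_{\mathcal{V}_\varrho(e^{-t\mathcal{L}})}$; that bound (Proposition~\ref{weak type maximal operator}) is the main technical work of the section and is entirely absent from your plan. The payoff of this localization is that the resulting sparse cubes~$P$ sit inside~$Q_j$, whence $r_P\lesssim\rho(x_P)$ and $\psi_N(P)^{-1}\sim1$, so the decay factor can be inserted \emph{for free}, giving the local domination by $\mathcal{A}_{\mathcal{S}_j}^{\rho,N}$ and the sharp weighted bound after summing in~$j$.
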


\begin{remark}
Comparing with Theorem 1.4's (i) in \cite{WW}, the result of Theorem \ref{quantitative Ap} is optimal because of that for any given $\theta>0$, $A_p^{\rho,\theta/p'}\subsetneqq A_p^{\rho,\theta}$. Moreover, our argument will be based on the sparse dominations, which is different from one in \cite{WW}.
\end{remark}

Next, we investigate two-weight inequality for variation operators. Two-weight inequalities for classical operators can be found in \cite{BonHQ,CruzM,CruzPe,Ka,Ler1,LRW,Perez,Sawyer,TV} and the references therein. In the following, we will give a weaker bump condition for two-weight inequality of $\mathcal{V}_{\varrho}({e^{-t\mathcal{L}}})$.
\begin{theorem}\label{quantitative two weight}
Let $n\geq3$, $V\in RH_{s}$ with $s>n/2$ and $\mathcal{L}=-\triangle+V$. For $\varrho>2$, $\theta\geq0$ and $\rho$ is the critical radius function associated with $\mathcal{L}$, if $\Phi,\Psi$ are Young functions such that $\bar{\Phi}\in B_{p'}$, $\bar{\Psi}\in B_p$ and a pair $(\mu,v)$ of weights satisfies
\begin{equation*}\label{bump condition}
[\mu,\nu]_{\Phi,\Psi,p,\rho,\theta}:=\sup_Q\|\nu^{1/p}\|_{\Phi,Q}\|\mu^{-1/p}\|_{\Psi,Q}
\psi_\theta(Q)^{-1}<\infty,
\end{equation*}
here and what follows, $\psi_\theta(Q):=(1+r_Q/\rho(x_Q))^\theta$. Then
\begin{equation*}
\|\mathcal{V}_\varrho(e^{-t\mathcal{L}})f\|_{L^p(\nu)}
\lesssim[\mu,\nu]_{\Phi,\Psi,p,\rho,\theta}[\bar{\Phi}]_{B_{p'}}^{1/p'}[\bar{\Psi}]_{B_p}^{1/p}
\|f\|_{L^p(\mu)}.
\end{equation*}
\end{theorem}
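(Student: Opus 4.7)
The plan is to adapt the sparse-domination machinery used for Theorem~\ref{quantitative Ap}. The starting point is a Schr\"odinger-adapted bilinear sparse bound: for suitable $f,g$ there is a sparse family $\mathcal{S}$ of dyadic cubes so that
$$\bigg|\int \mathcal{V}_\varrho(e^{-t\mathcal{L}})f(x)\, g(x)\, dx\bigg|
\lesssim \sum_{Q\in\mathcal{S}} \psi_\theta(Q)\,\langle f\rangle_Q\,\langle g\rangle_Q\,|Q|.$$
The factor $\psi_\theta(Q)$ records the loss in the heat kernel of $\mathcal{L}$ past the critical scale $\rho(x_Q)$ and is precisely what creates room for both the enlarged weight class $A_p^{\rho,\theta}$ used in Theorem~\ref{quantitative Ap} and the $\psi_\theta^{-1}$-weakened bump here. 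By $L^p(\nu)$-duality it suffices to bound the above bilinear form with $g = h\nu^{1/p}$ and $\|h\|_{L^{p'}} \leq 1$.

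Into each summand one inserts the generalized H\"older inequality for Orlicz averages,
$$\langle f \rangle_Q \leq 2\|f\mu^{1/p}\|_{\bar\Psi,Q}\|\mu^{-1/p}\|_{\Psi,Q}, \qquad \langle h\nu^{1/p}\rangle_Q \leq 2\|h\|_{\bar\Phi,Q}\|\nu^{1/p}\|_{\Phi,Q},$$
and then applies the bump hypothesis $\|\nu^{1/p}\|_{\Phi,Q}\|\mu^{-1/p}\|_{\Psi,Q} \leq [\mu,\nu]_{\Phi,\Psi,p,\rho,\theta}\,\psi_\theta(Q)$ to extract the joint constant, leaving $\|f\mu^{1/p}\|_{\bar\Psi,Q}$ and $\|h\|_{\bar\Phi,Q}$ as the only remaining data associated to $Q$.

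Once the bump is absorbed, $\sigma$-sparseness trades $|Q|$ for $|E_Q|$ with the sets $E_Q \subset Q$ pairwise disjoint, and pointwise domination by Orlicz maximal operators estimates the sum by $\int_{\R^n} M_{\bar\Psi}(f\mu^{1/p})(x)\, M_{\bar\Phi}(h)(x)\, dx$. H\"older's inequality together with the P\'erez $B_p$-bound $\|M_{\bar\Psi}\|_{L^p \to L^p} \lesssim [\bar\Psi]_{B_p}^{1/p}$ (see \cite{Perez}) and its $B_{p'}$ analogue for $M_{\bar\Phi}$ then yield the announced constant $[\mu,\nu]_{\Phi,\Psi,p,\rho,\theta}[\bar\Phi]_{B_{p'}}^{1/p'}[\bar\Psi]_{B_p}^{1/p}$.

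The main obstacle is the bookkeeping of the $\psi_\theta$ factors, because a naive combination of the sparse bound with the bump accumulates a stray $\psi_\theta(Q)^2$ rather than the clean $\psi_\theta(Q)$ one would like. Handling it cleanly requires either stratifying $\mathcal{S}$ into the level sets $\{Q : 2^k \leq 1 + r_Q/\rho(x_Q) < 2^{k+1}\}$ and summing the resulting geometric series via the heat-kernel decay, or upgrading the initial sparse bound so that the exponent of $\psi_\theta$ is already halved — both strategies natural in the critical-radius framework developed in \cite{BetFHR,CaSiZh,WW}.
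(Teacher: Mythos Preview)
Your proposal contains a conceptual error in the sparse bound itself: the factor $\psi_\theta(Q)$ appears on the wrong side. The Schr\"odinger heat kernel carries \emph{extra decay} $(1+\sqrt{t}/\rho(x))^{-N}$ compared with the classical one; this decay propagates to the sparse domination, so the correct bound is
\[
\mathcal{V}_\varrho(e^{-t\mathcal{L}})f(x)\;\lesssim\;\sum_{Q\in\mathcal{S}}\psi_N(Q)^{-1}\langle|f|\rangle_{3Q}\chi_Q(x),
\]
i.e.\ the sparse form $\mathcal{A}_{\mathcal{S}}^{\rho,N}$ of \S2.3, not a form with a $\psi_\theta(Q)^{+1}$ \emph{loss}. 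Your version with $\psi_\theta(Q)$ on top is of course true (it is weaker than the correct one), but it is too weak to close the argument---and this is precisely the ``stray $\psi_\theta(Q)^2$'' obstacle you flag at the end. With the decay in the right place, the generalized H\"older step produces $\|\nu^{1/p}\|_{\Phi,Q}\|\mu^{-1/p}\|_{\Psi,Q}\le [\mu,\nu]_{\Phi,\Psi,p,\rho,\theta}\,\psi_\theta(Q)$, and this $\psi_\theta(Q)$ cancels exactly against the $\psi_N(Q)^{-1}$ coming from the sparse bound (take $N\ge\theta$). There is no bookkeeping problem and no need for the stratification or ``halving'' tricks you suggest.

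A second gap is that you assume a global sparse domination on $\mathbb{R}^n$ as a black box. In the paper this is not available directly; instead one covers $\mathbb{R}^n$ by critical balls $B_j=B(x_j,\rho(x_j))$ (Lemma~\ref{covering lemma}), splits $f=f\chi_{3Q_j}+f\chi_{(3Q_j)^c}$, and treats the two pieces separately. On the local piece the Lerner principle (Proposition~\ref{principle}, via Proposition~\ref{weak type maximal operator}) yields a sparse family $\mathcal{S}_j\subset\mathcal{D}(Q_j)$, and because every $P\in\mathcal{S}_j$ satisfies $r_P\lesssim\rho(x_P)$, the factor $\psi_N(P)^{-1}$ can be inserted for free---this is how \eqref{local estimate} is obtained. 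The far piece is controlled pointwise by $M^{\rho,\sigma}f$ (estimate \eqref{global estimate}), and the two-weight bound for $M^{\rho,\sigma}$ from Lemma~\ref{quantitative weighted bounds for Hardy-Littlewood maximal operators}(2) handles it. Your outline omits both the localization and the far-piece estimate, so as written it does not constitute a proof.
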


\begin{remark}Obviously, the bump condition in our Theorem \ref{quantitative two weight}
\begin{equation*}
\sup_Q\|\nu^{1/p}\|_{\Phi,Q}\|\mu^{-1/p}\|_{\Psi,Q}
\psi_\theta(Q)^{-1}<\infty,
\end{equation*}
is weaker than
\begin{equation*}
\sup_Q\|\nu^{1/p}\|_{\Phi,Q}\|\mu^{-1/p}\|_{\Psi,Q}<\infty,
\end{equation*}
which leads to the two-weight inequality for maximal truncated Calder\'on-Zygmund operator (see \cite[Theorem 1.2]{Ler1}).
\end{remark}

On the other hand, Berra, Pradolini and Quijano \cite{BePrQu} recently studied the following weighted mixed type inequalities for $M^{\rho,\sigma}$ and Schr\"{o}dinger Calder\'{o}n-Zygmund operator $T$.
\begin{theorem}\rm(\cite{BePrQu})\label{mixed weak type of CZ and HL}
Let $\rho$ be a critical radius function. Let $\mu\in A_1^\rho$ and $\nu\in A_\infty^\rho(\mu)$. Then for any $\lambda>0$,
\begin{align*}
\lambda\mu\nu\Big(\Big\{x\in\mathbb{R}^n:\frac{|S(f\nu)(x)|}{\nu(x)}>\lambda\Big\}\Big)\lesssim
\int_{\mathbb{R}^n}|f(x)|\mu(x)\nu(x)dx,
\end{align*}
where $S$ is a Schr\"{o}dinger Calder\'{o}n-Zygmund operator $T$ or $M^{\rho,\sigma}$ for some $\sigma\geq0$.
\end{theorem}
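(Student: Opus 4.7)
The plan is to follow the Cruz-Uribe--Martell--P\'erez strategy for mixed weak-type inequalities, adapted to the Schr\"odinger setting with critical radius function $\rho$. Reducing to $f\ge 0$ and $\lambda=1$ by homogeneity and substituting $g=f\nu$, the claim becomes
\[
\mu\nu\bigl(\{x:|S(g)(x)|>\nu(x)\}\bigr)\lesssim \int g\,d\mu.
\]
The first step is a Calder\'on--Zygmund-type decomposition of $g$ with respect to $d\mu$ at level $1$, carried out only along \emph{sub-critical} stopping cubes (those with $r_Q\lesssim\rho(x_Q)$, so that $\psi_\theta(Q)\sim 1$); any super-critical stopping region is first pre-covered by disjoint critical balls from the Shen-type Whitney decomposition associated with $\rho$, on which the required bound follows directly from $\mu\in A_1^\rho$ and $\nu\in A_\infty^\rho(\mu)$. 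This produces a splitting $g=\phi+\sum_j b_j$ with $\phi$ uniformly bounded and each $b_j$ supported in a sub-critical $Q_j$ with $\int b_j\,d\mu=0$ and $\langle g\rangle_{Q_j,\mu}\sim 1$.

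For the good part $\phi$, I would apply Chebyshev in an appropriate weighted $L^s$ space and invoke the weighted boundedness of $S$ supplied by Theorem \ref{quantitative Ap}: the combination $\mu\in A_1^\rho$ with $\nu\in A_\infty^\rho(\mu)$ places the relevant product weight in a suitable $A_s^{\rho,\theta'}$ class, and the boundedness of $\phi$ together with a H\"older step reduces the bound on $\{|S\phi|>\nu/2\}$ to $\int g\,d\mu$. For the bad part, split the target into $\bigcup_j Q_j^*$ (fixed dilates) and its complement. On the union, a reverse-H\"older-type estimate for $\nu$ with respect to $d\mu$, valid on sub-critical cubes under $A_\infty^\rho(\mu)$, combines with $\langle g\rangle_{Q_j,\mu}\sim 1$ to give $\mu\nu(Q_j^*)\lesssim \int_{Q_j} g\,d\mu$, and summation yields the correct total.

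Off $\bigcup_j Q_j^*$, I would use cancellation: for $S=T$, the smoothness of the Schr\"odinger--Calder\'on--Zygmund kernel together with $\int b_j\,d\mu=0$ and $\mu\in A_1^\rho$ yields the standard estimate
\[
\sum_j\int_{(Q_j^*)^c}\frac{|Tb_j(x)|}{\nu(x)}\mu(x)\nu(x)\,dx\lesssim \sum_j\int_{Q_j}|b_j|\,d\mu\lesssim \int g\,d\mu;
\]
for $S=M^{\rho,\sigma}$, the pointwise bound $M^{\rho,\sigma}b_j(x)\lesssim \langle g\rangle_{Q_j}\lesssim 1$ for $x\notin Q_j^*$ reduces the problem to a maximal-function level-set estimate handled analogously. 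The main obstacle I anticipate is the super-critical regime: the usual $A_1$ doubling of $\mu$ fails above the critical scale, so the CZ stopping cannot be closed inside a super-critical cube, and the growth factor $\psi_\theta(Q)$ from the kernel estimates and from the definition of $A_\infty^\rho(\mu)$ cannot be ignored. The Shen-type Whitney decomposition into disjoint critical balls, used to pre-treat those cubes separately, is what absorbs this factor into finite-overlap constants and makes the whole argument close.
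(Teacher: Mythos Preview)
This statement is not proved in the present paper at all: it is quoted verbatim from \cite{BePrQu} and used as a black box in the proof of Theorem~\ref{mixed weak type inequality for variation of heat semigroup in Sch}. There is therefore no ``paper's own proof'' to compare your attempt against. From Remark~1.7 one learns only that \cite{BePrQu} does argue via a Calder\'on--Zygmund decomposition, so your overall strategy is aligned with the source you would ultimately have to reproduce.

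Two concrete problems with your sketch are worth flagging. First, the cancellation you impose is the wrong one: the Schr\"odinger Calder\'on--Zygmund kernel is smooth in the Lebesgue variable, so the standard off-diagonal estimate for $Tb_j$ requires $\int_{Q_j} b_j(y)\,dy=0$, not $\int_{Q_j} b_j\,d\mu=0$. With only the $\mu$-cancellation you cannot subtract $K(x,x_{Q_j})\int b_j\,dy$ and exploit $|K(x,y)-K(x,x_{Q_j})|\lesssim r_{Q_j}^{\delta}|x-x_{Q_j}|^{-n-\delta}$. The decomposition should therefore be run with Lebesgue averages (with stopping thresholds tied to $\mu$), producing Lebesgue-mean-zero bad pieces; the $A_1^\rho$ condition on $\mu$ is then what converts $\int_{Q_j}|b_j|\,dy$ back into $\int_{Q_j} g\,d\mu$ at the end. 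Second, the reference to Theorem~\ref{quantitative Ap} for the good part is misplaced: that theorem concerns $\mathcal{V}_\varrho(e^{-t\mathcal{L}})$, not $T$ or $M^{\rho,\sigma}$. The weighted $L^s$ bounds you need for $S$ come instead from the Schr\"odinger CZO literature (e.g.\ \cite{BonHS}) and, for the maximal operator, from Lemma~\ref{quantitative weighted bounds for Hardy-Littlewood maximal operators}.
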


It is well-known that the investigation on the weighted mixed weak type inequality for the classical operators in harmonic analysis originated from the work of  Sawyer in \cite{Sawyer1},
which can provide an alternative proof of weighted $L^p$-boundedness of Hardy-Littlewood maximal operator $M$ with $A_p$ weights. Precisely, Sawyer \cite{Sawyer1} obtained the following weighted mixed weak type inequality.

\begin{theorem}\rm(\cite{Sawyer1})
Let $\mu,\nu\in A_1$. Then
\begin{align}\label{general mix}
\lambda\mu\nu\Big(\Big\{x\in\mathbb{R}:\frac{M(f\nu)(x)}{\nu(x)}>\lambda\Big\}\Big)
\lesssim\int_{\mathbb{R}}|f(x)|\mu(x)\nu(x)dx.
\end{align}
\end{theorem}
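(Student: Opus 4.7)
The plan is to reduce, using the $A_1$ condition on $\nu$, the mixed weak-type estimate to a weighted weak-type $(1,1)$ bound for a Hardy--Littlewood-type maximal operator with respect to the product measure $\mu\nu\,dx$, and then to prove this via a Calder\'on--Zygmund decomposition adapted to $d\nu=\nu\,dx$ combined with a careful simultaneous use of both $A_1$ hypotheses. By homogeneity $f\mapsto\lambda f$, reduce to $\lambda=1$. Since $\langle\nu\rangle_Q\leq [\nu]_{A_1}\,\nu(x)$ for a.e.\ $x\in Q$, the condition $M(f\nu)(x)/\nu(x)>1$ forces the existence of a cube $Q\ni x$ with $\nu(Q)^{-1}\int_Q|f|\nu\,dy\gtrsim 1$. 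Defining the $\nu$-weighted maximal operator
$$M_\nu f(x):=\sup_{Q\ni x}\nu(Q)^{-1}\int_Q|f|\nu\,dy,$$
it therefore suffices to prove $\mu\nu(\{M_\nu f>c\})\lesssim \int_{\mathbb{R}}|f|\mu\nu\,dx$.

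Since $\nu\in A_1$ is doubling, $M_\nu$ is the Hardy--Littlewood maximal operator on the space of homogeneous type $(\mathbb{R},d\nu)$. A Calder\'on--Zygmund decomposition of $|f|$ at level $c$ relative to $d\nu$ produces a disjoint family $\{Q_j\}$ of cubes with $c<\nu(Q_j)^{-1}\int_{Q_j}|f|\nu\,dy\lesssim c$ and $|f|\lesssim c$ $\nu$-a.e.\ on the complement. A standard argument then shows $\{M_\nu f>Kc\}\subseteq \bigcup_j Q_j$ up to a $\mu\nu$-null set, reducing the task to the cube-by-cube estimate
$$\mu\nu(Q_j)\lesssim \int_{Q_j}|f|\mu\nu\,dx,$$
which, summed via the disjointness of $\{Q_j\}$, yields the desired inequality.

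The hard part will be this cube-by-cube estimate: one must transfer the $\nu$-average control $\nu(Q_j)^{-1}\int_{Q_j}|f|\nu\gtrsim 1$ into the $\mu\nu$-average control $\mu\nu(Q_j)^{-1}\int_{Q_j}|f|\mu\nu\gtrsim 1$. The obstacle is that the product $\mu\nu$ of two $A_1$ weights is \emph{not} a Muckenhoupt weight in general and may even fail to be locally integrable, so the mixed weak-type inequality cannot be reduced to a single-weight estimate. Overcoming this requires a simultaneous exploitation of both $A_1$ hypotheses: the pointwise $A_1$ lower bound $\mu(y)\gtrsim [\mu]_{A_1}^{-1}\langle\mu\rangle_{Q_j}$ a.e.\ on $Q_j$ gives $\int_{Q_j}|f|\mu\nu\,dy\gtrsim \langle\mu\rangle_{Q_j}\nu(Q_j)$, after which one needs the matching upper bound $\mu\nu(Q_j)\lesssim \langle\mu\rangle_{Q_j}\nu(Q_j)$. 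The latter is not automatic and must be extracted from the reverse H\"older self-improvement of $A_1$ weights and the openness of the $A_1$ class, with compatible exponents chosen for $\mu$ and $\nu$; alternatively, in $\mathbb{R}^n$ one may use a principal-cubes iteration in the spirit of Cruz-Uribe--Martell--P\'erez that avoids estimating $\mu\nu(Q_j)$ directly. This weight-interaction step is precisely what makes Sawyer's inequality subtle, and I expect it to consume the bulk of the technical work.
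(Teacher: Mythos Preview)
The paper does not prove this statement: it is quoted verbatim from Sawyer \cite{Sawyer1} as historical background, with no argument supplied. So there is nothing in the paper to compare your plan against. That said, your plan has a genuine gap worth flagging.

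Your first reduction is correct as a set inclusion: $\nu\in A_1$ gives
\[
\Big\{x:\frac{M(f\nu)(x)}{\nu(x)}>\lambda\Big\}\subseteq\big\{x:M_\nu f(x)>\lambda/[\nu]_{A_1}\big\}.
\]
But the reduced statement you then aim for, namely that $M_\nu$ is of weak type $(1,1)$ with respect to the measure $\mu\nu\,dx$, is \emph{false} in general under the hypotheses $\mu,\nu\in A_1$. Take $\mu(x)=\nu(x)=|x|^{-\alpha}$ on $\mathbb{R}$ with $\tfrac12<\alpha<1$; both are $A_1$, yet $\mu\nu=|x|^{-2\alpha}$ is not locally integrable at $0$. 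With $f=\chi_{[1,2]}$ one has $\int|f|\mu\nu<\infty$, while for any small $c>0$ the set $\{M_\nu f>c\}$ contains a neighbourhood of $0$ (take $Q=[-2,2]$), so $\mu\nu(\{M_\nu f>c\})=\infty$. Thus your cube-by-cube target $\mu\nu(Q_j)\lesssim\langle\mu\rangle_{Q_j}\nu(Q_j)$ cannot hold, and no choice of reverse-H\"older exponents will rescue it: the exponents coming from $\mu$ and $\nu$ need not be conjugate, and in the example above the left side is infinite.

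The point is that your reduction discards exactly the feature that makes Sawyer's inequality true: the genuine level set $\{M(f\nu)/\nu>\lambda\}$ automatically avoids points where $\nu$ is large (since $M(f\nu)$ is locally bounded there while the denominator blows up), and those are precisely the points where $\mu\nu$ can fail to be integrable. Passing to $M_\nu$ loses this. Sawyer's one-dimensional proof keeps careful track of this interaction via a rearrangement argument; the higher-dimensional proofs of Cruz-Uribe--Martell--P\'erez and Li--Ombrosi--P\'erez proceed instead through extrapolation and Coifman--Fefferman type control, never attempting to bound $\mu\nu(Q_j)$ directly. Your final hedge toward ``a principal-cubes iteration in the spirit of Cruz-Uribe--Martell--P\'erez'' is pointing at the right literature, but it is a different argument from the one you have outlined, not a repair of it.
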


Subsequently, Cruz-Uribe et al. \cite{CrMP} extended \eqref{general mix} to higher dimension as well as the corresponding weighted mixed weak type inequality for Calder\'{o}n-Zygmund operator, and Li et al. \cite{LiOP} also obtained the corresponding results under weakening the assumptions in \cite{CrMP}. Ones can consult \cite{CalRi,CaIRXY,CaSiZh,CaXYa,LiOB,WSS} etc. for more development of weighted mixed weak type inequalities.

Inspired by Theorem \ref{mixed weak type of CZ and HL} above, we aim to establish the weighted mixed weak type inequality for $\mathcal{V}_\varrho(e^{-t\mathcal{L}})$.



\begin{theorem}\label{mixed weak type inequality for variation of heat semigroup in Sch}
Let $n\geq3$, $V\in RH_{s}$ with $s>n/2$, and $\mathcal{L}=-\triangle+V$. Let $\varrho>2$ and $\rho$ be a critical radius function. Suppose that $\mu\in A_1^\rho$ and $\nu\in A_\infty^\rho(\mu)$. Then for any $\lambda>0$,
\begin{align*}
\lambda\mu\nu\Big(\Big\{x\in\mathbb{R}^n:\frac{\mathcal{V}_\varrho(e^{-t\mathcal{L}})(f\nu)(x)}{\nu(x)}>
\lambda\Big\}\Big)\lesssim\int_{\mathbb{R}^n}|f(x)|\mu(x)\nu(x)dx.
\end{align*}
\end{theorem}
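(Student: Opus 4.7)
The plan is to combine a pointwise sparse-type domination for $\mathcal{V}_\varrho(e^{-t\mathcal{L}})$ adapted to the critical radius function $\rho$ with the strategy of Berra-Pradolini-Quijano \cite{BePrQu} (see Theorem \ref{mixed weak type of CZ and HL}), which is itself a Schr\"odinger-setting version of the Li-Ombrosi-P\'erez scheme for the classical case. From the proof of Theorem \ref{quantitative Ap}, I would extract a pointwise bound of the form
\begin{equation*}
\mathcal{V}_\varrho(e^{-t\mathcal{L}})f(x) \lesssim \mathcal{A}^{\rho,\theta}_{\mathcal{S}} f(x) := \sum_{Q\in\mathcal{S}} \psi_\theta(Q)\,\langle |f|\rangle_Q\,\mathbf{1}_Q(x),
\end{equation*}
for some sparse family $\mathcal{S}=\mathcal{S}(f)$ and some $\theta\geq 0$. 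It then suffices to prove the mixed weak-type inequality for $\mathcal{A}^{\rho,\theta}_{\mathcal{S}}(f\nu)/\nu$ with respect to the measure $\mu\nu$.

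By homogeneity I may assume $\lambda=1$, and set $F:=|f|\nu$ together with $E:=\{x\in\mathbb{R}^n:\mathcal{A}^{\rho,\theta}_{\mathcal{S}}F(x)>\nu(x)\}$. Chebyshev relative to the measure $\mu\,dx$ yields
\begin{equation*}
\mu\nu(E)\leq \int_{E}\mathcal{A}^{\rho,\theta}_{\mathcal{S}}F(x)\,\mu(x)\,dx = \sum_{Q\in\mathcal{S}}\psi_\theta(Q)\,\langle F\rangle_Q\,\mu(E\cap Q),
\end{equation*}
so the remaining task is combinatorial: bound this sparse sum by $C\int|f|\,\mu\nu$. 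I would carry out a principal-cube / Calder\'on-Zygmund stopping-time decomposition of $\mathcal{S}$ based on the $\mu$-averages of $F$. The condition $\mu\in A_1^\rho$ furnishes a localised estimate of the form $\langle\mu\rangle_Q\lesssim\psi_\theta(Q)\,\essi_Q\mu$, which absorbs the growth factor $\psi_\theta(Q)$ into a uniform control of $\mu$ over $Q$. The condition $\nu\in A_\infty^\rho(\mu)$ then provides a localised Fujii-Wilson / Coifman-Fefferman-type inequality comparing $\mu\nu(E\cap Q)$ with $\mu\nu(Q)$ through a power of $\mu(E\cap Q)/\mu(Q)$, up to additional $\psi_\theta$ corrections. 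Combined with the Carleson packing property of the principal-cube family, the resulting geometric series sums to $\lesssim \int F\,d\mu=\int|f|\,\mu\nu$.

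The main obstacle, compared with the classical case handled in \cite{LiOP}, is the presence of the growth factor $\psi_\theta(Q)$ in the sparse bound: one must show that $\mu\in A_1^\rho$ is strong enough to absorb it, while simultaneously $\nu\in A_\infty^\rho(\mu)$ supplies a localised reverse H\"older inequality for $\nu$ with respect to $\mu$ whose $\psi_\theta$-corrections remain compatible. Tracking how the various powers of $\psi_\theta$ produced at the absorption step and at the Fujii-Wilson step propagate through the principal-cube iteration, so that they do not accumulate uncontrollably along the stopping tree, is the most delicate point of the argument.
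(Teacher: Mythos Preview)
Your proposal has two genuine gaps.

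First, the pointwise sparse bound you claim is not what comes out of the proof of Theorem~\ref{quantitative Ap}. That proof does \emph{not} produce a single sparse family with a growth factor $\psi_\theta(Q)$; it produces, for each critical ball $B_j$, a \emph{local} sparse domination of $\mathcal{V}_\varrho(e^{-t\mathcal{L}})(f\chi_{3Q_j})$ by $\mathcal{A}_{\mathcal{S}_j}^{\rho,N}$ (where all cubes $P\in\mathcal{S}_j$ satisfy $r_P\lesssim\rho(x_P)$, so $\psi_\theta(P)\sim1$), together with a separate pointwise bound of the \emph{global} part $\mathcal{V}_\varrho(e^{-t\mathcal{L}})(f\chi_{(3Q_j)^c})$ by $M^{\rho,\sigma}f$. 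There is no step that merges these into a sparse operator with growth factor $\psi_\theta(Q)$ over cubes of arbitrary scale.

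Second, and more serious, your mechanism for absorbing $\psi_\theta(Q)$ in the stopping-time argument points in the wrong direction. The $A_1^\rho$ condition reads $\langle\mu\rangle_Q\lesssim\psi_{\theta'}(Q)\,\essi_Q\mu$: replacing the average by the infimum \emph{costs} a factor $\psi_{\theta'}(Q)$, it does not cancel one. Likewise $\nu\in A_\infty^\rho(\mu)$ gives $\mu\nu(E)/\mu\nu(Q)\lesssim\psi_{\theta''}(Q)(\mu(E)/\mu(Q))^\varepsilon$, another growth factor. Along the principal-cube tree these factors multiply, and since cubes can have $r_Q\gg\rho(x_Q)$ there is no a priori bound. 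You identify this as ``the most delicate point'' but give no device to prevent the blow-up; without one the argument does not close.

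The paper's route is quite different and avoids this problem entirely. It splits $f=f\chi_{B_x}+f\chi_{B_x^c}$ at the critical scale, uses the perturbation formula of Dziuba\'nski to write $\mathcal{V}_\varrho(e^{-t\mathcal{L}})(f\chi_{B_x})\lesssim \mathcal{V}_\varrho(e^{t\Delta})(f\chi_{B_x})+M^{\rho,\sigma}f$ and $\mathcal{V}_\varrho(e^{-t\mathcal{L}})(f\chi_{B_x^c})\lesssim M^{\rho,\sigma}f$, and then invokes Theorem~\ref{mixed weak type of CZ and HL} for $M^{\rho,\sigma}$. For the classical-Laplacian piece the key observation (Lemma~4.3 in the paper) is that $\mu\in A_1^\rho$ and $\nu\in A_\infty^\rho(\mu)$ force $\mu\in A_1^{\rho,\mathrm{loc}}$ and $\nu\in A_\infty^{\rho,\mathrm{loc}}$; on each critical ball $\widetilde{B_j}$ these restrict and extend to genuine $A_1$, $A_\infty$ weights, so the classical Li--Ombrosi--P\'erez result applies locally and a covering argument finishes. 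All $\psi_\theta$ factors are thus frozen to $O(1)$ by working only at or below the critical scale.
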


\begin{remark}Our original idea is to use Calder\'{o}n-Zygmund decomposition like \cite{BePrQu} to prove our theorem. Unfortunately, it is out of our expectation, since the argument based on this idea comes to an end when we try to use the cancellation of bad function and the perturbation formula in \cite{Dz} because of that the perturbation formula targets two spatial variables, not three different spatial variables. Hence, to obtain our Theorem \ref{mixed weak type inequality for variation of heat semigroup in Sch}, we need to find new techniques to tackle weighted mixed weak type inequalities in the Schr\"{o}dinger setting, which will be presented in Section 4.
\end{remark}

In addition, for $1\leq p<\infty$, Kerman and Torchinsky \cite{KT} introduced a class of weights $A_p^\mathcal{R}$, in which the elements satisfy
\begin{equation*}
\|\omega\|_{A_p^\mathcal{R}}:=\sup_{E\subset Q}\frac{|E|}{|Q|}\Big(\frac{\omega(Q)}{\omega(E)}\Big)^{1/p}<\infty.
\end{equation*}
We mention that $A_p\subseteq A_p^\mathcal{R}$ for $1<p<\infty$ and $A_1=A_1^\mathcal{R}$. $A_p^\mathcal{R}$ class was used in \cite{KT} to characterize the following restricted weak type $(p,p)$ estimate of the Hardy-Littlewood maximal operator $M$,
\begin{align*}
M:L^{p,1}(\omega)\rightarrow L^{p,\infty}(\omega)\Leftrightarrow \omega\in A_p^{\mathcal{R}},
\end{align*}
moreover,
\begin{align*}
\|M\chi_E\|_{L^{p,\infty}(\omega)}\lesssim\|\omega\|_{A_p^\mathcal{R}}\omega(E)^{1/p}.
\end{align*}
Let $\theta\geq0$ and $0\leq \alpha<n$. Define fractional maximal function $M_\alpha^{\rho,\theta}$ associated with critical radius function $\rho$ as below,
\begin{align*}
M_\alpha^{\rho,\theta}f(x):=\sup_{Q\ni x}\frac{1}{(\psi_\theta(Q)|Q|)^{1-\frac{\alpha}{n}}}\int_Q|f(y)|dy.
\end{align*}
For brevity, we denote $M_0^{\rho,\theta}$ by $M^{\rho,\theta}$. $M_\alpha^{\rho,\theta}$ is an vital operator to establish the boundedness of singular integrals associated with differential operators, for instance, see \cite{BonHQ,BuiBD1,BuiBD,LRW,TZ,WW,ZhYa}. It is very natural to consider what types of weights can be used to characterize the restricted weak type estimates of $M_\alpha^{\rho,\theta}$. Obviously, when $\alpha=0$, $A_p^\mathcal{R}$ is not the right class of weights that we seek. Based on the above, we raise the following question.

\textbf{Question 2.} What types of weights can be used to characterize the restricted weak type estimates of $M_\alpha^{\rho,\theta}$? How to deal with this new class of weights to obtain the quantitative restricted weak type estimates of $M_\alpha^{\rho,\theta}$ and $\mathcal{V}_\varrho(e^{-t\mathcal{L}})$?

To solve this question, we introduce a new class of weights, denoted by $A_{p,q,\alpha}^{\rho,\theta,\mathcal{R}}$, where the definition is given in Section 2 (see Definition \ref{a new class of weights}). The following theorem is concerned with the characterization of this new class of weights.
\begin{theorem}\label{characterization restricted weak type}
Let $\theta\geq0$, $0\leq\alpha<n$, $1\leq p<\infty$ and $p\leq q$. Let $\mu$, $\nu$ be weights and $\rho$ be a critical radius function. Then for every measurable function $f$,
\begin{equation}\label{restricted weak type for maximal function}
\|M_\alpha^{\rho,\theta}f\|_{L^{q,\infty}(\nu)}\leq c\|f\|_{L^{p,1}(\mu)}
\end{equation}
if and only if $(\mu,\nu)\in A_{p,q,\alpha}^{\rho,\theta,\mathcal{R}}$. Moreover, if $(\mu,\nu)\in A_{p,q,\alpha}^{\rho,\theta,\mathcal{R}}$, then
\begin{equation*}
\|M_\alpha^{\rho,\theta}f\|_{L^{q,\infty}(\nu)}\lesssim [\mu,\nu]_{A_{p,q,\alpha}^{\rho,\theta,\mathcal{R}}}\|f\|_{L^{p,1}(\mu)}.
\end{equation*}
\end{theorem}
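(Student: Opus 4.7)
The statement is a characterization, so the proof splits naturally into necessity ($\Rightarrow$) and sufficiency ($\Leftarrow$). Both directions pivot on testing the inequality against characteristic functions of measurable sets, since the weight class $A_{p,q,\alpha}^{\rho,\theta,\mathcal{R}}$ is precisely the ``restricted'' analogue of the classical two-weight fractional condition, comparing $\mu(E)$ for $E\subset Q$ with $\nu(Q)$ via the normalization $(\psi_\theta(Q)|Q|)^{1-\alpha/n}$.

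\textbf{Necessity.} Fix a cube $Q$ and a measurable set $E\subset Q$. A direct computation from the definition of $M_\alpha^{\rho,\theta}$ gives
\[
M_\alpha^{\rho,\theta}\chi_E(x)\geq\frac{|E|}{(\psi_\theta(Q)|Q|)^{1-\alpha/n}}\quad\text{for every }x\in Q,
\]
so $Q\subset\{M_\alpha^{\rho,\theta}\chi_E>\lambda\}$ whenever $\lambda$ is slightly smaller than this lower bound. Applying \eqref{restricted weak type for maximal function} to $f=\chi_E$ and using the Lorentz identity $\|\chi_E\|_{L^{p,1}(\mu)}=\mu(E)^{1/p}$ yields
\[
\frac{|E|}{(\psi_\theta(Q)|Q|)^{1-\alpha/n}}\,\nu(Q)^{1/q}\lesssim c\,\mu(E)^{1/p},
\]
and taking the supremum over all admissible pairs $(E,Q)$ delivers $(\mu,\nu)\in A_{p,q,\alpha}^{\rho,\theta,\mathcal{R}}$.

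\textbf{Sufficiency.} The first step is to reduce to characteristic functions. Since $M_\alpha^{\rho,\theta}$ is positive and sublinear, a standard atomic decomposition in $L^{p,1}(\mu)$ shows that \eqref{restricted weak type for maximal function} follows from the special case
\[
\|M_\alpha^{\rho,\theta}\chi_E\|_{L^{q,\infty}(\nu)}\lesssim[\mu,\nu]_{A_{p,q,\alpha}^{\rho,\theta,\mathcal{R}}}\,\mu(E)^{1/p},\quad E\text{ measurable.}
\]
For such $E$ and $\lambda>0$, set $\Omega_\lambda:=\{M_\alpha^{\rho,\theta}\chi_E>\lambda\}$. A Vitali-type selection from the family of cubes realizing the level-set inequality produces a pairwise disjoint subfamily $\{Q_j\}$ with $\Omega_\lambda\subset\bigcup_j 3Q_j$ and
\[
|E\cap Q_j|>\tfrac{\lambda}{2}(\psi_\theta(Q_j)|Q_j|)^{1-\alpha/n}.
\]
Since $\psi_\theta(3Q_j)\lesssim\psi_\theta(Q_j)$, this stopping inequality also bounds $|E\cap 3Q_j|$ from below, up to constants, by the analogous quantity on $3Q_j$. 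Applying the weight condition to each pair $(E\cap 3Q_j,3Q_j)$, combined with the bounded overlap of $\{3Q_j\}$ and the inequality $\sum a_j^{q/p}\leq(\sum a_j)^{q/p}$ (valid since $q/p\geq 1$), gives
\[
\nu(\Omega_\lambda)^{1/q}\lesssim[\mu,\nu]_{A_{p,q,\alpha}^{\rho,\theta,\mathcal{R}}}\,\lambda^{-1}\,\mu(E)^{1/p},
\]
which is the desired restricted weak type estimate.

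\textbf{Main obstacle.} The principal subtlety in the sufficiency direction is coordinating the expansion from $Q_j$ to $3Q_j$ (forced by the covering lemma) with the $\psi_\theta$ factor appearing inside the weight condition; this requires verifying that $\psi_\theta$ is essentially preserved under bounded dilation, which follows from $\psi_\theta(Q)=(1+r_Q/\rho(x_Q))^\theta$ but must be tracked carefully through the inequalities. A secondary issue is the passage from characteristic functions to general $L^{p,1}$ functions: while the standard equivalence for positive sublinear operators allows this reduction, the constants intrinsic to the quasi-Banach structure of $L^{q,\infty}(\nu)$ must be absorbed without degrading the constant $[\mu,\nu]_{A_{p,q,\alpha}^{\rho,\theta,\mathcal{R}}}$ in the final bound.
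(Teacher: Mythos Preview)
Your approach is valid but genuinely different from the paper's. The paper never reduces to characteristic functions: for necessity it invokes Lorentz-space duality, choosing for each cube $Q$ a nonnegative $f$ with $\|f\|_{L^{p,1}(\mu)}=1$ that nearly realizes $\|\chi_Q\mu^{-1}\|_{L^{p',\infty}(\mu)}$, and reads off $[\mu,\nu]_{A_{p,q,\alpha}^{\rho,\theta,\mathcal{R}}}$ directly; for sufficiency it covers the (truncated) level set by a bounded-overlap family, applies H\"older's inequality in Lorentz spaces $\int_{Q_j}|f|\le\|\chi_{Q_j}\mu^{-1}\|_{L^{p',\infty}(\mu)}\|f\chi_{Q_j}\|_{L^{p,1}(\mu)}$, and sums via the summation Lemma~\ref{summation lemma}. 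Your route is more elementary---no Lorentz H\"older, no extremal function---but be aware that it is implicitly built on the \emph{set-based} constant $\|\mu,\nu\|_{A_{p,q,\alpha}^{\rho,\theta,\mathcal{R}}}$ of Theorem~\ref{another characterization restricted weak type}(3) rather than the Lorentz-norm constant $[\mu,\nu]_{A_{p,q,\alpha}^{\rho,\theta,\mathcal{R}}}$ of Definition~\ref{a new class of weights}: your necessity step yields $\|\mu,\nu\|\lesssim c$ and your sufficiency yields the operator bound $\lesssim\|\mu,\nu\|$. To recover the ``Moreover'' clause with the stated constant $[\mu,\nu]$ you still need the comparison $\|\mu,\nu\|\lesssim[\mu,\nu]$, which follows from one application of Lorentz H\"older ($|E|\lesssim\mu(E)^{1/p}\|\chi_Q\mu^{-1}\|_{L^{p',\infty}(\mu)}$ for $E\subset Q$) but is not mentioned in your sketch.
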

\begin{remark}
From the definition of $A_{p,q,\alpha}^{\rho,\theta,\mathcal{R}}$, if $\mu,v\equiv1$, it is not hard to see that $[\mu,\nu]_{A_{p,q,\alpha}^{\rho,\theta,\mathcal{R}}}<\infty$ if and only if $1/p-1/q=\alpha/n$.
\end{remark}

The next theorem indicates the equivalence between $[\mu,v]_{A_{p,q,\alpha}^{\rho,\theta,\mathcal{R}}}$ and $\|\mu,v\|_{A_{p,q,\alpha}^{\rho,\theta,\mathcal{R}}}$, where the latter is useful in revealing the relationship between $A_{p}$ and $A_p^{\rho,\theta,\mathcal{R}}$, see Section 2 for details.
\begin{theorem}\label{another characterization restricted weak type}
Let $\theta\geq0$, $0\leq\alpha<n$, $1\leq p<\infty$ and $p\leq q$. Let $\mu$, $\nu$ be weights and $\rho$ be a critical radius function. The following statements are equivalent:
\begin{itemize}
\item [(1)]$\|M_\alpha^{\rho,\theta}f\|_{L^{q,\infty}(\nu)}\lesssim\|f\|_{L^{p,1}(\mu)}$;
\item [(2)]$\|M_\alpha^{\rho,\theta}(\chi_E)\|_{L^{q,\infty}(\nu)}\lesssim\mu(E)^{1/p}$ for any measurable set $E\subseteq\mathbb{R}^n$;
\item [(3)]
$
\|\mu,\nu\|_{A_{p,q,\alpha}^{\rho,\theta,\mathcal{R}}}:=\sup_Q\nu(Q)^{1/q}
    \sup_{0<\mu(P)<\infty}\frac{|P\cap Q|}{(|Q|\psi_\theta(Q))^{1-\frac{\alpha}{n}}}\mu(P)^{-1/p}
    <\infty;
$
\item [(4)]$(\mu,\nu)\in A_{p,q,\alpha}^{\rho,\theta,\mathcal{R}}$.
\end{itemize}
\end{theorem}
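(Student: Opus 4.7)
The plan is to close the implications in the cycle $(1)\Rightarrow(2)\Rightarrow(3)\Rightarrow(4)\Rightarrow(1)$, exploiting the fact that the last implication $(4)\Rightarrow(1)$ is already furnished by Theorem~\ref{characterization restricted weak type} and therefore requires no additional argument.

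For $(1)\Rightarrow(2)$, I would simply specialize $(1)$ to $f=\chi_E$ and invoke the identity $\|\chi_E\|_{L^{p,1}(\mu)}\simeq\mu(E)^{1/p}$, which follows at once from the distribution-function representation of the Lorentz quasi-norm.

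The heart of the argument is $(2)\Rightarrow(3)$. I would fix an arbitrary cube $Q$ and an arbitrary measurable set $P$ with $0<\mu(P)<\infty$, and apply $(2)$ with $E=P$. Using $Q$ itself in the defining supremum of $M_\alpha^{\rho,\theta}$ yields the pointwise lower bound
\[
M_\alpha^{\rho,\theta}(\chi_P)(x)\;\geq\;\frac{|P\cap Q|}{(\psi_\theta(Q)|Q|)^{1-\alpha/n}},\qquad x\in Q,
\]
so $Q$ is contained in the corresponding level set of $M_\alpha^{\rho,\theta}(\chi_P)$. The definition of the weak $L^{q,\infty}(\nu)$-quasi-norm combined with hypothesis $(2)$ then gives
\[
\frac{|P\cap Q|}{(\psi_\theta(Q)|Q|)^{1-\alpha/n}}\,\nu(Q)^{1/q}\;\leq\;\|M_\alpha^{\rho,\theta}(\chi_P)\|_{L^{q,\infty}(\nu)}\;\lesssim\;\mu(P)^{1/p},
\]
and taking the suprema over $Q$ and $P$ yields exactly $\|\mu,\nu\|_{A_{p,q,\alpha}^{\rho,\theta,\mathcal{R}}}<\infty$, which is statement~$(3)$.

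For $(3)\Leftrightarrow(4)$, I would match the inner supremum of $(3)$ with the one defining $A_{p,q,\alpha}^{\rho,\theta,\mathcal{R}}$ in Section~2. The natural substitution $E:=P\cap Q\subseteq Q$ leaves $|P\cap Q|$ unchanged while $\mu(P)\geq\mu(E)$, so replacing $P$ by $E$ only increases the tested quantity; conversely any admissible $E\subseteq Q$ can be used as a legitimate $P$. Hence the supremum over arbitrary $P$ coincides with the supremum over subsets $E\subseteq Q$, and $(3)$ and $(4)$ agree up to a universal constant; the final implication $(4)\Rightarrow(1)$ is the sufficiency direction of Theorem~\ref{characterization restricted weak type}. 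The only genuinely structural step is $(2)\Rightarrow(3)$, and its mild subtlety is that in $(3)$ the set $P$ is not a priori a subset of $Q$, so the reduction $P\mapsto P\cap Q$ is precisely what realigns the test family with the classical $A^{\mathcal{R}}$-type condition.
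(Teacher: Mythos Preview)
Your cycle $(1)\Rightarrow(2)\Rightarrow(3)\Rightarrow(4)\Rightarrow(1)$ matches the paper's, and your arguments for $(1)\Rightarrow(2)$ and $(2)\Rightarrow(3)$ are the same as the paper's. The gap is in $(3)\Rightarrow(4)$: you have misidentified what condition~(4) actually says. In this paper, $(\mu,\nu)\in A_{p,q,\alpha}^{\rho,\theta,\mathcal{R}}$ is \emph{not} defined via a supremum over subsets $E\subseteq Q$; by Definition~\ref{a new class of weights} it means
\[
[\mu,\nu]_{A_{p,q,\alpha}^{\rho,\theta,\mathcal{R}}}
=\sup_Q\,\nu(Q)^{1/q}\,\frac{\|\chi_Q\mu^{-1}\|_{L^{p',\infty}(\mu)}}{(|Q|\psi_\theta(Q))^{1-\alpha/n}}<\infty.
\]
Your reduction $P\mapsto P\cap Q$ correctly shows that the inner supremum in $(3)$ equals $\sup_{E\subseteq Q}|E|\,\mu(E)^{-1/p}$, but that quantity is not the same object as $\|\chi_Q\mu^{-1}\|_{L^{p',\infty}(\mu)}$, and nothing in your proposal bridges the two.

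The missing step is precisely the level-set computation the paper carries out: for $y>0$ set $P_y=\{x\in Q:\mu(x)^{-1}>y\}$ and observe
\[
y\,\mu(P_y)=\int_{P_y}y\,\mu\le\int_{P_y}\mu^{-1}\mu=|P_y|
\le\|\mu,\nu\|_{A_{p,q,\alpha}^{\rho,\theta,\mathcal{R}}}\,
\frac{\mu(P_y)^{1/p}(|Q|\psi_\theta(Q))^{1-\alpha/n}}{\nu(Q)^{1/q}},
\]
from which $y\,\mu(P_y)^{1/p'}$ is bounded uniformly in $y$, giving the required control on $\|\chi_Q\mu^{-1}\|_{L^{p',\infty}(\mu)}$ (with the obvious modification when $p=1$). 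This Kolmogorov-type passage from the set-based supremum to the weak-$L^{p'}$ norm is the actual content of $(3)\Rightarrow(4)$ and cannot be omitted.
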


In the following, if $p=q$ and $\alpha=0$, we write $(\omega,\omega)\in A_{p,p,0}^{\rho,\theta,\mathcal{R}}$ by $\omega\in A_{p}^{\rho,\theta,\mathcal{R}}$ for brevity. The final theorem is related to the quantitative restricted weak type estimates of $\mathcal{V}_\varrho(e^{-t\mathcal{L}})$.

\begin{theorem}\label{quantitative restricted weak type}
Let $n\geq3$, $V\in RH_{s}$ with $s>n/2$ and $\mathcal{L}=-\triangle+V$. Let $\varrho>2$, $\theta\geq0$ and $\rho$ be a critical radius function. If $\omega\in A_{p}^{\rho,\theta,\mathcal{R}}$ with $1<p<\infty$, then
$$\|\mathcal{V}_\varrho(e^{-t\mathcal{L}})f\|_{L^{p,\infty}(\omega)}
\lesssim[\omega]_{A_{p}^{\rho,\theta,\mathcal{R}}}^{p+1}\|f\|_{L^{p,1}(\omega)}.$$
\end{theorem}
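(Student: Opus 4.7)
The plan is to combine the sparse domination used in the proof of Theorem \ref{quantitative Ap} with the restricted weak type characterization of the critical-radius maximal function $M^{\rho,\theta}$ provided by Theorem \ref{characterization restricted weak type}. Concretely, the sparse bound that underlies Theorem \ref{quantitative Ap} produces a family $\mathcal{S}$ of sparse cubes such that, for suitable auxiliary functions $g$,
$$\int \mathcal{V}_\varrho(e^{-t\mathcal{L}})f\cdot g\,dx \lesssim \sum_{Q\in\mathcal{S}}\psi_\theta(Q)^{\gamma}\langle |f|\rangle_Q\langle |g|\rangle_Q |Q|,$$
where the factor $\psi_\theta(Q)^{\gamma}$ records the perturbation coming from the Schr\"odinger heat kernel estimates via the critical radius function. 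The task is then to prove the restricted weak type $(p,p)$ bound for the corresponding sparse form against $\omega \in A_p^{\rho,\theta,\mathcal{R}}$ with constant $[\omega]_{A_p^{\rho,\theta,\mathcal{R}}}^{p+1}$.

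The execution proceeds in four steps. First, by the standard density/linearization trick for $L^{p,1}\to L^{p,\infty}$ mappings, I reduce to testing on indicators $f=\chi_E$ and bounding the Lorentz norm through dual testing against $g=\chi_F \omega^{1/p'}$ with $\omega(F)<\infty$. Second, I organize the sparse sum by principal cubes generated by a stopping-time argument on the $\rho,\theta$-adapted averages $\psi_\theta(Q)\langle\chi_E\rangle_Q$, stopping each time the average jumps by a factor $2[\omega]_{A_p^{\rho,\theta,\mathcal{R}}}$; the sparseness of $\mathcal{S}$ together with the geometric growth of the stopping levels produces a convergent series. Third, on each stopping generation the principal cubes are pairwise disjoint and are contained in level sets of $M^{\rho,\theta}\chi_E$, which by Theorem \ref{characterization restricted weak type} (with $\alpha=0$, $q=p$, and $\mu=\nu=\omega$) have $\omega$-measure controlled by $[\omega]_{A_p^{\rho,\theta,\mathcal{R}}}\omega(E)^{1/p}$ to the appropriate power. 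Finally, summing across generations using the characterization $\|\omega\|_{A_p^{\rho,\theta,\mathcal{R}}}$ from Theorem \ref{another characterization restricted weak type} to pair $\psi_\theta(Q)\langle g\rangle_Q$ against $\omega(F)^{1/p'}$ delivers the dual inequality, which undoes to the stated restricted weak type estimate.

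The main obstacle is pinning down the exact exponent $p+1$ in $[\omega]_{A_p^{\rho,\theta,\mathcal{R}}}^{p+1}$. Unlike the strong $L^p$ setting, Lorentz endpoints preclude a straightforward Marcinkiewicz interpolation, so the exponent cannot be borrowed from existing Calder\'on-Zygmund results and must be tracked through the stopping-time decomposition itself. One factor of $[\omega]_{A_p^{\rho,\theta,\mathcal{R}}}$ is consumed by a single application of the restricted weak type bound for $M^{\rho,\theta}$, while the remaining $p$ factors emerge from the geometric progression built into the principal cube construction (each stage costs one factor of $[\omega]_{A_p^{\rho,\theta,\mathcal{R}}}$ to ensure convergence), combined with H\"older in Lorentz spaces which introduces the exponent $p$. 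A secondary technical issue is absorbing the $\psi_\theta(Q)^{\gamma}$ factor at each step; this is handled by working directly with the form $\|\omega\|_{A_p^{\rho,\theta,\mathcal{R}}}$ in Theorem \ref{another characterization restricted weak type}, which bakes in the required $\psi_\theta(Q)^{1-\alpha/n}$ normalization, so that the perturbation is absorbed cube-by-cube rather than globally.
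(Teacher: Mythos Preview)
Your plan differs substantially from the paper's, and while a stopping-time/principal-cube route might be made to work, the paper's argument is both shorter and makes the exponent $p+1$ transparent.

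The paper does not run a stopping-time decomposition at all. After the same localization to critical balls $\{B_j\}$ and the split into the local sparse piece $\mathcal{A}_{\mathcal{S}_j}^{\rho,N}(f\chi_{3Q_j})$ and the global piece $M^{\rho,\sigma}f$ (exactly as in the proof of Theorem \ref{quantitative Ap}), the restricted weak type bound for the sparse operator is obtained by a single duality step against $g\in L^{p',1}(\omega)$. The key pointwise observation is that on each $E_Q$ one has, using the $A_p^{\rho,\theta,\mathcal{R}}$ condition in its ratio form to compare $\omega(3Q)/\omega(E_Q)$ with $\psi_\theta(Q)^{p}$,
\[
\langle|f|\rangle_{3Q}\,\psi_N(Q)^{-1}\Big(\frac{1}{\omega(3Q)}\int_Q g\,\omega\Big)\,\omega(3Q)
\lesssim \|\omega\|_{A_p^{\rho,\theta,\mathcal{R}}}^{p}\, M^{\rho,\theta}f(x)\,M_\omega^c g(x)\,\omega(E_Q),
\]
provided $N\ge(p+1)\theta$. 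Summing over $Q$, applying H\"older in Lorentz spaces, the restricted weak type bound for $M^{\rho,\theta}$ from Theorem \ref{characterization restricted weak type} (one factor of $[\omega]_{A_p^{\rho,\theta,\mathcal{R}}}$), and the fact that the \emph{centered} maximal function $M_\omega^c$ is bounded on $L^{p',1}(\omega)$ with constant independent of $\omega$, yields the exponent $p+1$ directly: $p$ from the measure comparison, $1$ from $M^{\rho,\theta}$. The global term $M_2$ is controlled straight from Theorem \ref{characterization restricted weak type} and the finite overlap of the $B_j$.

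In your proposal, the origin of the exponent is less clear: the claim that ``each stage costs one factor of $[\omega]_{A_p^{\rho,\theta,\mathcal{R}}}$'' is not how principal-cube constructions typically produce powers of the weight constant (the geometric series there usually converges with an absolute constant once the stopping ratio exceeds $2$), so your accounting for the extra $p$ factors would need a precise mechanism. Also note that the sparse domination carries a \emph{decay} factor $\psi_N(Q)^{-1}$ (not a growth $\psi_\theta(Q)^{\gamma}$), which is exactly what allows one to absorb the $\psi_\theta(Q)^{p}$ cost of the weight comparison; you should make that sign explicit. Finally, the localization to the covering $\{B_j\}$ is essential, since the pointwise sparse bound is only available on each $Q_j$; your write-up should state this step rather than subsume it into ``the sparse domination used in the proof of Theorem \ref{quantitative Ap}.''
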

\begin{remark}
Comparing with the classes of weights in \cite{CaSiZh}, the classes of weights we consider are larger.
\end{remark}

We organize the rest of the paper as follows. In Section 2, we will give some preliminaries. In Section 3, we prove Theorems \ref{quantitative Ap}-\ref{quantitative two weight}. The proof of Theorem \ref{mixed weak type inequality for variation of heat semigroup in Sch} is given in Section 4. The proofs of Theorem \ref{characterization restricted weak type} and Theorem \ref{another characterization restricted weak type} will be given in Section 5. Finally, we present the proof of Theorem \ref{quantitative restricted weak type} in Section 6.

Throughout the rest of the paper, we denote $f\lesssim g$, $f\thicksim g$ if $f\leq Cg$ and $f\lesssim g \lesssim f$, respectively. For any cube $Q:=Q(x_Q,r_Q)\subseteq \mathbb{R}^n$ and $\sigma>0$, $x_Q$ and $r_Q$ are the center and side-length of $Q$, respectively, $\chi_Q$ represents the characteristic function of $Q$ and $\sigma Q$ means $Q(x_Q,\sigma r_Q)$, $\langle f\rangle_Q$ represents the average $|Q|^{-1}\int_Qf(x)dx$ of $f$ over the cube $Q$.

\section{Preliminaries}
In this section, we introduce some classes of weights and recall some necessary definitions and lemmas.
\subsection{Lorentz spaces}
Let $p>0$ and $(\mathbb{R}^n,\nu)$ be a measure space. $L^{p,1}(\nu)$ is the space of $\nu$-measurable functions such that
\begin{equation*}
\|f\|_{L^{p,1}(\nu)}:=p\int_0^\infty \nu(\{x\in\mathbb{R}^n:|f(x)|>y\})^{1/p}dy.
\end{equation*}
If $p\geq1$, it is known that $L^{p,1}(\nu)\hookrightarrow L^p(\nu)\hookrightarrow L^{p,\infty}(\nu)$. We end this subsection by recalling the following summation lemma,
\begin{lemma}{\rm(\cite{R})}\label{summation lemma}
Let $0<p<\infty$ and $\gamma\geq\max\{p,1\}$. Given a measurable function $f$ and a weight $\omega$, if $\{E_j\}$ is a collection of measurable sets such that $\sum_{j\geq1}\chi_{E_j}\leq C$, then
\begin{equation*}
\sum_{j\geq1}\|f\chi_{E_j}\|_{L^{p,1}(\omega)}^\gamma\leq C^{\gamma/p}\|f\|_{L^{p,1}(\omega)}^\gamma.
\end{equation*}
\end{lemma}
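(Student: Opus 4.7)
The plan is to express each Lorentz (quasi-)norm as an integral of the $p$-th root of a distribution function, then swap the $\ell^\gamma$ sum with the integral using Minkowski, and finally exploit the bounded-overlap hypothesis on $\{E_j\}$. Concretely, set
$$
g_j(y) := \omega\big(\{|f|>y\}\cap E_j\big)^{1/p},
\qquad
g(y) := \omega\big(\{|f|>y\}\big)^{1/p},
$$
so that by the definition recalled in the paper
$$
\|f\chi_{E_j}\|_{L^{p,1}(\omega)} = p\int_0^\infty g_j(y)\,dy,
\qquad
\|f\|_{L^{p,1}(\omega)} = p\int_0^\infty g(y)\,dy.
$$

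Since $\gamma\geq 1$, Minkowski's integral inequality (applied to the counting measure in $j$ and Lebesgue measure in $y$) gives
$$
\Bigl(\sum_{j\geq 1}\Bigl(\int_0^\infty g_j(y)\,dy\Bigr)^{\gamma}\Bigr)^{1/\gamma}
\;\leq\;
\int_0^\infty\Bigl(\sum_{j\geq 1} g_j(y)^{\gamma}\Bigr)^{1/\gamma}dy.
$$
Next I would control the inner sum pointwise in $y$. Since $\gamma/p\geq 1$, the elementary superadditivity inequality $\sum_j a_j^{\gamma/p}\leq(\sum_j a_j)^{\gamma/p}$ for nonnegative $a_j$ yields
$$
\sum_{j\geq 1} g_j(y)^{\gamma}
= \sum_{j\geq 1}\omega(\{|f|>y\}\cap E_j)^{\gamma/p}
\leq \Bigl(\sum_{j\geq 1}\omega(\{|f|>y\}\cap E_j)\Bigr)^{\gamma/p}.
$$
The overlap condition $\sum_j \chi_{E_j}\leq C$, integrated against $\omega$ over the set $\{|f|>y\}$, bounds the inner sum by $C\,\omega(\{|f|>y\})$, so that $(\sum_j g_j(y)^\gamma)^{1/\gamma}\leq C^{1/p} g(y)$.

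Combining, the Minkowski step produces
$$
\Bigl(\sum_{j\geq 1}\|f\chi_{E_j}\|_{L^{p,1}(\omega)}^{\gamma}\Bigr)^{1/\gamma}
\leq p\,C^{1/p}\int_0^\infty g(y)\,dy
= C^{1/p}\,\|f\|_{L^{p,1}(\omega)},
$$
which is the desired estimate after raising to the $\gamma$-th power. There is no real obstacle: the proof is essentially a bookkeeping exercise, and the only delicate point is checking that every inequality goes in the correct direction, which is guaranteed precisely by the two hypotheses $\gamma\geq 1$ (needed for Minkowski) and $\gamma/p\geq 1$ (needed for superadditivity of $t\mapsto t^{\gamma/p}$). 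Both are packaged into the single assumption $\gamma\geq\max\{p,1\}$.
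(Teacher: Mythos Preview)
Your proof is correct. The paper does not actually supply a proof of this lemma; it simply cites it from Roure Perdices' dissertation~\cite{R}. Your argument is clean, and the two constraints packaged in $\gamma\geq\max\{p,1\}$ are invoked precisely where they are needed: $\gamma\geq 1$ for the Minkowski step and $\gamma/p\geq 1$ for the $\ell^{\gamma/p}\hookrightarrow\ell^1$ embedding on the measures $\omega(\{|f|>y\}\cap E_j)$.
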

\subsection{Weights}
Recall that $\rho$ is a critical radius function if there exist $C_0,N_0>0$ such that for any $x,y\in\mathbb{R}^n$,
\begin{align}\label{critical radius function}
C_0^{-1}\rho(x)\Big(1+\frac{|x-y|}{\rho(x)}\Big)^{-N_0}\leq\rho(y)\leq C_0\rho(x)
\Big(1+\frac{|x-y|}{\rho(x)}\Big)^{\frac{N_0}{N_0+1}}.
\end{align}

\begin{definition}\label{def2.1}
A weight is a locally integrable function on $\mathbb{R}^n$ that takes values in $(0,\infty)$ almost everywhere. Let $\theta\geq0$, $\rho$ be a critical radius function and $\mu$ be a weight. We say that $w\in A_{p}^{\rho,\theta}(\mu)~(1<p<\infty)$ if
\begin{align*}
[\omega]_{A_p^{\rho,\theta}(\mu)}
:=\sup_{Q}\Big(\frac{1}{\mu(Q)}
\int_{Q}\omega(x)\mu(x)dy\Big)\Big(\frac{1}{\mu(Q)}
\int_{Q}\omega(x)^{1-p'}\mu(x)dx\Big)^{p-1}\psi_\theta(Q)^{-1}<\infty.
\end{align*}
We say that $w\in A_{1}^{^{\rho,\theta}}(\mu)$ if
$$[\omega]_{A_{1}^{^{\rho,\theta}}(\mu)}:=\sup_Q\frac{1}{\psi_\theta(Q)\mu(Q)}\int_{Q}
\omega(x)\mu(x)dx(\essi_{x\in Q}\omega(x))^{-1}<\infty,$$
where the supremum is taken over all cubes $Q:=Q(x_Q,r_Q)\subseteq\mathbb{R}^n$. We also define
$$A_p^{\rho}(\mu):=\bigcup_{\theta\geq0}A_p^{\rho,\theta}(\mu),\quad A_\infty^{\rho}(\mu):=\bigcup_{p\geq1}A_p^{\rho}(\mu).$$
\end{definition}
There are several characterizations of $A_\infty^{\rho}(\mu)$, here, we use the following characterization: $\omega\in A_\infty^{\rho}(\mu)\Leftrightarrow$ there are $\theta\geq0$ and $\epsilon>0$ such that
\begin{align*}
\frac{\omega\mu(E)}{\omega\mu(Q)}\lesssim\psi_\theta(Q)\Big(\frac{\mu(E)}{\mu(Q)}\Big)^\epsilon
\end{align*}
holds for each cube $Q$ and every measurable set $E\subseteq Q$. For brevity, we denote $A_p^{\rho,\theta}(1)$ by $A_p^{\rho,\theta}$ when $\mu=1$. Specially, $A_p$ weights coincide $A_p^{\rho,0}$ weights. It is obvious that the class of Muckenhoupt $A_p$ weights is a subset of the class of $A_p^{\rho,\theta}$ weights. An important property of $A_p^\rho$ weights is the reverse H\"{o}lder inequality that they satisfy.

\begin{definition}
Let $\theta\geq0$, $\rho$ be a critical radius function. We say that a weight $\omega\in RH_s^{\rho,\theta}$ $(1<s<\infty)$ if
\begin{align*}
\Big(\frac{1}{|Q|}\int_Q\omega(x)^sdx\Big)^{1/s}\lesssim\psi_\theta(Q)
\Big(\frac{1}{|Q|}\int_Q\omega(x)dx\Big).
\end{align*}
We say $\omega\in RH_\infty^{\rho,\theta}$ if
\begin{align*}
\esss_{x\in Q}\omega(x)\lesssim\psi_\theta(Q)
\Big(\frac{1}{|Q|}\int_Q\omega(x)dx\Big).
\end{align*}
We also define $RH_s^\rho:=\bigcup_{\theta\geq0}RH_s^{\rho,\theta}$, $1<s\leq\infty$.
\end{definition}

We will use the following properties of $RH_s^\rho$ classes.
\begin{lemma}{\rm(\cite{BePrQu})}\label{property of reverse holder class}
Let $\rho$ be a critical radius function.\\
$(1)$~Let $\omega\in A_p^\rho\cap RH_s^\rho$. Then there exist weight $\omega_1$, $\omega_2$ such that $\omega=\omega_1\omega_2$, where $\omega_1\in A_1^\rho\cap RH_s^\rho$, $\omega_2\in A_p^\rho\cap RH_\infty^\rho$;\\
$(2)$~Let $\omega_1$, $\omega_2\in RH_\infty^\rho$. Then $\omega_1\omega_2\in RH_\infty^\rho$.
\end{lemma}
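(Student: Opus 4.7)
The plan is to deduce the quantitative restricted weak-type bound by combining a Schr\"odinger-type sparse domination of $\mathcal{V}_\varrho(e^{-t\mathcal{L}})$ (of the kind already used to prove Theorem \ref{quantitative Ap}) with the characterization of $A_{p}^{\rho,\theta,\mathcal{R}}$ as a testing class for $M^{\rho,\theta}$ furnished by Theorems \ref{characterization restricted weak type}--\ref{another characterization restricted weak type}. First I would reduce to indicators: since $\|\chi_E\|_{L^{p,1}(\omega)}=\omega(E)^{1/p}$ and any $L^{p,1}(\omega)$ function decomposes through layer-cake into $\ell^1$-scaled indicators of its level sets, Lemma \ref{summation lemma} together with the sublinearity of $\mathcal{V}_\varrho(e^{-t\mathcal{L}})$ reduces the theorem to proving
\begin{equation*}
\|\mathcal{V}_\varrho(e^{-t\mathcal{L}})\chi_E\|_{L^{p,\infty}(\omega)}\lesssim [\omega]_{A_{p}^{\rho,\theta,\mathcal{R}}}^{p+1}\,\omega(E)^{1/p}
\end{equation*}
for every measurable $E$ with $\omega(E)<\infty$.

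Next I would invoke the Schr\"odinger sparse bilinear form for $\mathcal{V}_\varrho(e^{-t\mathcal{L}})$ developed in Section~3, a bound of the shape
\begin{equation*}
\big|\langle \mathcal{V}_\varrho(e^{-t\mathcal{L}}) f,g\rangle\big|\lesssim \sum_{Q\in\mathcal{S}}\psi_\theta(Q)^{\beta}\langle |f|\rangle_Q\langle |g|\rangle_Q|Q|
\end{equation*}
for some sparse family $\mathcal{S}$ and admissible $\beta\geq 0$. By the standard layer-cake description of $\|\cdot\|_{L^{p,\infty}(\omega)}$ and positivity of the sparse form, the task becomes bounding this bilinear quantity when $f=\chi_E$ and $g$ is a nonnegative function supported in a level set of $\mathcal{V}_\varrho(e^{-t\mathcal{L}})\chi_E$. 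On this form I would run a principal-cube stopping time with respect to the density $\langle\chi_E\rangle_Q$: each principal cube $P$ generates a Carleson subfamily whose total contribution is controlled by $\langle\chi_E\rangle_P\,\langle g\rangle_P\,\omega(P)\psi_\theta(P)^\beta$, the Schr\"odinger factor being absorbed by the $\psi_\theta$-twist built into $A_{p}^{\rho,\theta,\mathcal{R}}$ and by the $\rho$-doubling inherent in $A_p^{\rho,\theta}$ weights.

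The summation over principal cubes is then handled in two steps: the density piece is dominated by $\|M^{\rho,\theta}\chi_E\|_{L^{p,\infty}(\omega)}$, which by Theorem \ref{characterization restricted weak type} is bounded by $[\omega]_{A_{p}^{\rho,\theta,\mathcal{R}}}\omega(E)^{1/p}$ and supplies a single power of $[\omega]$; the dual $g$-piece is absorbed by iterating the testing inequality in Theorem \ref{another characterization restricted weak type}~(3) along the principal-cube tree, yielding a further $p$ powers. The main obstacle is precisely this quantitative bookkeeping of the exponent $p+1$: keeping the constant proportional to $[\omega]_{A_{p}^{\rho,\theta,\mathcal{R}}}^{p+1}$ requires that the Carleson loss along the tree not pick up any factor growing in $\beta$, which in turn relies essentially on $A_{p}^{\rho,\theta,\mathcal{R}}$ being the $\psi_\theta$-twisted version of the Kerman--Torchinsky class and on the reverse H\"older-type estimate available for weights in $A_p^{\rho}$. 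Without this twist, the Schr\"odinger correction $\psi_\theta(Q)^\beta$ in the sparse form would destroy summability along the principal-cube tree and only a much worse power of $[\omega]$ could be achieved.
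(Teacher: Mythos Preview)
Your proposal addresses the wrong statement entirely. The lemma you were asked to prove is Lemma~\ref{property of reverse holder class}, a structural result about weights: part~(1) is a Jones-type factorization asserting that any $\omega\in A_p^\rho\cap RH_s^\rho$ splits as $\omega_1\omega_2$ with $\omega_1\in A_1^\rho\cap RH_s^\rho$ and $\omega_2\in A_p^\rho\cap RH_\infty^\rho$, and part~(2) says that $RH_\infty^\rho$ is closed under products. What you have written is instead a sketch of Theorem~\ref{quantitative restricted weak type}, the restricted weak-type bound for $\mathcal{V}_\varrho(e^{-t\mathcal{L}})$. Nothing in your argument---the reduction to indicators, the sparse bilinear form, the principal-cube stopping time, the appeal to $M^{\rho,\theta}$---bears on factorization of weights or on the multiplicative structure of $RH_\infty^\rho$.

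Moreover, the paper does not supply its own proof of this lemma: it is quoted from \cite{BePrQu} and used as a black box (specifically, inside the proof of Lemma~\ref{strong condition implies weak condition}). An actual proof would proceed along the lines of the classical Jones factorization adapted to the $\psi_\theta$-modified Muckenhoupt and reverse H\"older conditions, together with the elementary computation $\esss_{Q}(\omega_1\omega_2)\le(\esss_Q\omega_1)(\esss_Q\omega_2)\lesssim\psi_{\theta_1+\theta_2}(Q)\langle\omega_1\rangle_Q\langle\omega_2\rangle_Q$ for part~(2). Your sparse-domination machinery is simply irrelevant here.
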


Let $\rho$ be a critical radius function. Denote
$$\mathcal{B}_\rho:=\{B(x,r):x\in\mathbb{R}^n,r\leq\rho(x)\}.$$
The following class of $\rho$-localized weights is introduced in \cite{BonHS}.
\begin{definition}\label{local weight}
Let $1<p<\infty$. We say a weight $\omega\in A_{p}^{\rho,loc}$ if
\begin{equation*}
[\omega]_{A_{p}^{\rho,loc}}:=\sup_{B\in\mathcal{B}_\rho}\Big(\frac{1}{|B|}\int_B
\omega(x)dx\Big)\Big(\frac{1}{|B|}\int_B\omega(x)^{-1/(p-1)}dx\Big)^{p-1}<\infty,
\end{equation*}
and we say that $\in A_{1}^{\rho,loc}$ if
\begin{equation*}
[\omega]_{A_{1}^{\rho,loc}}:=\sup_{B\in\mathcal{B}_\rho}\Big(\frac{1}{|B|}\int_B
\omega(x)dx\Big)\|\omega^{-1}\|_{L^\infty(B)}<\infty,
\end{equation*}
where the supremum is taken over all balls $B\subseteq\mathbb{R}^n$. We also define $A_\infty^{\rho,loc}=\bigcup_{p\geq1}A_p^{\rho,loc}$.
\end{definition}
We give a remark about this class of weights.
\begin{remark}\label{remark 2.4}
Let $\theta\geq0$ and $1\leq p<\infty$.\\
$(1)$~It is direct that $A_{p}^{\rho,\theta}\subseteq A_{p}^{\rho,loc}$ and $[\omega]_{A_{p}^{\rho,loc}}\lesssim[\omega]_{A_{p}^{\rho,\theta}}$; \\
$(2)$~For any $\beta>1$, it was proved in \cite{BonHS} that $A_p^{\beta\rho,loc}=A_p^{\rho,loc}$ and $$[\omega]_{A_p^{\rho,loc}}\sim[\omega]_{A_p^{\beta\rho,loc}}.$$
\end{remark}

Let $B_0$ be a ball and $\omega$ be defined on $B_0$. We say that a weight $\omega\in A_p(B_0)$ with $1\leq p<\infty$ if $B\in\mathcal{B}_\rho$ is replaced by $B\subseteq B_0$ in Definition \ref{local weight}. The following lemma is related to the extension of weights.

\begin{lemma}{\rm(\cite{BonHS})}\label{extension}
Let $1\leq p<\infty$. Given a ball $B_0$ in $\mathbb{R}^n$ and a weight $\omega_0\in A_p(B_0)$, then $\omega_0$ has an extension $\omega\in A_p$ such that for any $x\in B_0$, $\omega_0(x)=\omega(x)$ and $[\omega]_{A_p(B_0)}\sim[\omega]_{A_p}$, where the implicit constants are independent of $\omega_0$ and $p$.
\end{lemma}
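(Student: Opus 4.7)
The plan is to extend $\omega_0$ explicitly in two steps, after normalizing by translation and dilation so that $B_0 = B(0,1)$. First, extend to $2B_0$ using the spherical inversion $\sigma(x) := x/|x|^2$: for $x \in 2B_0 \setminus \overline{B_0}$, set $\omega(x) := \omega_0(\sigma(x))$. The map $\sigma$ is a bilipschitz homeomorphism from $\{1 \leq |x| \leq 2\}$ onto $\{1/2 \leq |x| \leq 1\} \subseteq B_0$, with bilipschitz constant and Jacobian determinant bounded above and below by dimensional constants. Second, set $\omega$ equal to the constant $c := \langle \omega_0 \rangle_{B_0}$ on $\mathbb{R}^n \setminus 2B_0$. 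The reverse comparison $[\omega_0]_{A_p(B_0)} \leq [\omega]_{A_p}$ is trivial, since every ball contained in $B_0$ is a ball in $\mathbb{R}^n$ on which $\omega = \omega_0$.

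To verify $[\omega]_{A_p} \lesssim [\omega_0]_{A_p(B_0)}$ with implicit constants independent of $p$ and $\omega_0$, I would test the $A_p$ quantity on an arbitrary ball $B$ and split into three cases. (i) If $B \cap 2B_0 = \emptyset$, then $\omega$ is constant on $B$ and the $A_p$ quantity equals $1$. (ii) If $B \subseteq 2B_0$, use a change of variables under $\sigma$ to transport both averages $\langle \omega \rangle_B$ and $\langle \omega^{1-p'} \rangle_B$ to averages of $\omega_0$ and $\omega_0^{1-p'}$ over a set comparable in shape and size to a ball $B^{\ast} \subseteq B_0$, then invoke the $A_p(B_0)$ condition applied to $B^{\ast}$. (iii) If $B$ intersects both $2B_0$ and its complement, the geometry forces $r_B \geq 1/2$, hence $|B| \gtrsim |B_0|$; split the averages over $B \cap 2B_0$ and $B \setminus 2B_0$, bounding the inner piece via case (ii) after a mild enlargement and the outer piece directly using the constant $c$.

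The main obstacle is the quantitative preservation of the $A_p$ condition under $\sigma$ in case (ii), especially for balls $B \subseteq 2B_0$ that straddle the sphere $\partial B_0$ or that lie entirely in the outer annulus. The key estimate is that for every such $B$ the image $\sigma(B)$ is sandwiched between two concentric balls of comparable radius inside $B_0$, so the change of variables converts $\langle \omega \rangle_B \langle \omega^{1-p'}\rangle_B^{p-1}$ into a dimensional multiple of $\langle \omega_0 \rangle_{B^{\ast}} \langle \omega_0^{1-p'}\rangle_{B^{\ast}}^{p-1} \leq [\omega_0]_{A_p(B_0)}$. Because the comparison factors arise only from the Jacobian and bilipschitz constant of $\sigma$ on $\{1 \leq |x| \leq 2\}$, they depend only on $n$; in particular they do not pick up a $p$-dependent exponent, which yields the $p$-independent comparability $[\omega]_{A_p} \sim [\omega_0]_{A_p(B_0)}$ asserted in the lemma.
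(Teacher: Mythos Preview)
The paper does not prove this lemma; it is quoted from \cite{BonHS} without argument, so there is no proof in the paper against which to compare your construction. That said, your proposal has a genuine gap.

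Your claim in case (iii) that ``the geometry forces $r_B \geq 1/2$'' is false: a ball of arbitrarily small radius centered at a point of $\partial(2B_0)$ meets both $2B_0$ and its complement. For such small balls your $A_p$ quantity compares the constant $c = \langle \omega_0\rangle_{B_0}$ (on the outer half of $B$) with the values $\omega_0\circ\sigma$ (on the inner half). Since $\sigma$ sends $\partial(2B_0)$ to the sphere $\{|y|=1/2\}$, this amounts to comparing $c$ with the local behaviour of $\omega_0$ near an arbitrary point of $\{|y|=1/2\}$, and the $A_p(B_0)$ hypothesis gives no such control. Concretely, take $\omega_0(y) = |y-y_0|^{-\gamma}$ with $0<\gamma<n$ and $|y_0|=1/2$; this is an $A_1$ weight, hence belongs to $A_p(B_0)$ for every $p\ge 1$. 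Your extension then carries a singularity of $\omega$ at the point $4y_0\in\partial(2B_0)$ from the inside, while $\omega\equiv c$ just outside. For $B=B(4y_0,r)$ one computes $\langle\omega\rangle_B \sim r^{-\gamma}$ whereas $\langle\omega^{1-p'}\rangle_B \sim c^{1-p'}$ (the outer half dominates), so the $A_p$ quantity on $B$ blows up like $r^{-\gamma}/c$ as $r\to 0$. Thus $\omega\notin A_p$, and the two-step scheme ``reflect to an annulus, then jump to a constant'' cannot work as written: the seam at $\partial(2B_0)$ must be eliminated or treated by a different mechanism.
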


To characterize the restricted weak type estimates of $M_\alpha^{\rho,\theta}$, we introduce a new class of weights $A_{p,q,\alpha}^{\rho,\theta,\mathcal{R}}$ as follows.
\begin{definition}\label{a new class of weights}
Let $\theta\geq0$, $0\leq\alpha<n$, $1\leq p<\infty$, $p\leq q$ and $\rho$ be a critical radius function. Let $(\mu,\nu)$ be a pair of weights. We say that $(\mu,\nu)\in A_{p,q,\alpha}^{\rho,\theta,\mathcal{R}}$ if
\begin{equation*}
[\mu,\nu]_{A_{p,q,\alpha}^{\rho,\theta,\mathcal{R}}}:=
\sup_Q\nu(Q)^{1/q}\frac{\|\chi_Q\mu^{-1}\|_{L^{p',\infty}(\mu)}
}{(|Q|\psi_\theta(Q))^{1-\frac{\alpha}{n}}}<\infty.
\end{equation*}
\end{definition}
To have a better understanding of this new class of weights, we give the following theorem to illustrate the relationship between $A_{p}^{\rho}$ and $A_{p}^{\rho,\mathcal{R}}:=\bigcup_{\theta\geq0}A_{p}^{\rho,\theta,\mathcal{R}}$.
\begin{theorem}
Let $1\leq p<q<\infty$. Then $A_{1}^{\rho}= A_{1}^{\rho,\mathcal{R}}$, $A_{p}^{\rho}\subseteq A_{p}^{\rho,\mathcal{R}}\subseteq A_{q}^{\rho}$ and $A_\infty^{\rho}=\bigcup_{p\geq1}A_{p}^{\rho}=\bigcup_{p\geq1}A_{p}^{\rho,\mathcal{R}}$. Moreover,
\begin{equation*}
[\omega]_{A_{p}^{\rho,\theta,\mathcal{R}}}\leq[\omega]_{A_{p}^{\rho,p\theta}}^{1/p}~and~
[\omega]_{A_{q}^{\rho,q\theta}}\leq\Big(\frac{p'}{p'-q'}\Big)^{q-1}
[\omega]_{A_{p}^{\rho,\theta,\mathcal{R}}}^q.
\end{equation*}
\end{theorem}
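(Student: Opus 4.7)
The plan is to establish the two quantitative inclusions directly from the definitions, handle the endpoint $p=1$ by inspection, and derive the union identity as a formal consequence. The endpoint is fastest: when $p=1$ one has $p'=\infty$, so $\|\chi_Q\omega^{-1}\|_{L^{\infty,\infty}(\omega)}=(\operatorname{essinf}_{x\in Q}\omega(x))^{-1}$, and substituting this into Definition~\ref{a new class of weights} reproduces Definition~\ref{def2.1} verbatim, yielding $[\omega]_{A_1^{\rho,\theta,\mathcal{R}}}=[\omega]_{A_1^{\rho,\theta}}$ and hence $A_1^\rho=A_1^{\rho,\mathcal{R}}$.

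For the forward inclusion $A_p^\rho\subseteq A_p^{\rho,\mathcal{R}}$ I would use the trivial embedding $L^{p'}(\omega)\hookrightarrow L^{p',\infty}(\omega)$ applied to $\chi_Q\omega^{-1}$, which gives $\|\chi_Q\omega^{-1}\|_{L^{p',\infty}(\omega)}\leq(\int_Q\omega^{1-p'}\,dx)^{1/p'}$. Raising the defining quantity for $[\omega]_{A_p^{\rho,\theta,\mathcal{R}}}$ to the $p$-th power, using $1/p'=(p-1)/p$ to combine the $\omega$-integrals, and replacing $\psi_\theta(Q)^p$ by $\psi_{p\theta}(Q)$, the result is exactly the summand in $[\omega]_{A_p^{\rho,p\theta}}$, which yields $[\omega]_{A_p^{\rho,\theta,\mathcal{R}}}\leq[\omega]_{A_p^{\rho,p\theta}}^{1/p}$.

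The main substantive step is the reverse inclusion $A_p^{\rho,\mathcal{R}}\subseteq A_q^\rho$ with the sharp constant. The key tool is Kolmogorov's inequality: for $q'<p'$ and $g\in L^{p',\infty}(\omega)$,
\[
\int_Q|g|^{q'}\,d\omega\leq\frac{p'}{p'-q'}\,\omega(Q)^{1-q'/p'}\|g\|_{L^{p',\infty}(\omega)}^{q'}.
\]
Applying this to $g=\chi_Q\omega^{-1}$ and raising to the power $q-1$ (so that $q'(q-1)=q$), then multiplying through by $\omega(Q)/|Q|^q$ and exploiting $q-q/p'=q/p$, both sides reorganize into
\[
\frac{\omega(Q)}{|Q|}\Bigl(\frac{1}{|Q|}\int_Q\omega^{1-q'}\,dx\Bigr)^{q-1}\leq\Bigl(\frac{p'}{p'-q'}\Bigr)^{q-1}\Bigl(\omega(Q)^{1/p}\frac{\|\chi_Q\omega^{-1}\|_{L^{p',\infty}(\omega)}}{|Q|}\Bigr)^{q}.
\]
Distributing $\psi_{q\theta}(Q)^{-1}=\psi_\theta(Q)^{-q}$ on both sides and then taking the supremum over $Q$ delivers $[\omega]_{A_q^{\rho,q\theta}}\leq(p'/(p'-q'))^{q-1}[\omega]_{A_p^{\rho,\theta,\mathcal{R}}}^q$; the union identity $A_\infty^\rho=\bigcup_{p\geq1}A_p^\rho=\bigcup_{p\geq1}A_p^{\rho,\mathcal{R}}$ is then immediate from the two-sided inclusions combined with the definition of $A_\infty^\rho$.

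The main obstacle is the exponent bookkeeping: one must check that the Kolmogorov constant $p'/(p'-q')$ enters with the correct power $q-1$, and that the $\psi$-factor arithmetic via $\psi_{k\theta}(Q)=\psi_\theta(Q)^k$ closes the estimates with the indices $\theta\mapsto p\theta$ on the forward side and $\theta\mapsto q\theta$ on the reverse side exactly as stated. Once these identifications are in place, both quantitative inclusions follow by direct algebra.
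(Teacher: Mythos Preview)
Your proposal is correct and follows essentially the same strategy as the paper: the reverse inclusion $A_p^{\rho,\mathcal{R}}\subseteq A_q^\rho$ via Kolmogorov's inequality (which the paper cites as \cite[Exercise 1.1.11]{Gra}) is identical, as is the handling of $p=1$ and of the union identity. The only notable difference is in the forward inclusion $A_p^\rho\subseteq A_p^{\rho,\mathcal{R}}$: the paper routes through the alternative set-function characterization $\|\omega\|_{A_p^{\rho,\theta,\mathcal{R}}}$ from Theorem~\ref{another characterization restricted weak type} (bounding $|E|$ by H\"older and then invoking $[\omega]_{A_p^{\rho,\theta,\mathcal{R}}}\leq\|\omega\|_{A_p^{\rho,\theta,\mathcal{R}}}$), whereas you bypass this entirely by applying the embedding $L^{p'}(\omega)\hookrightarrow L^{p',\infty}(\omega)$ directly to $\chi_Q\omega^{-1}$; your route is more self-contained and avoids the forward reference.
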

\begin{proof}
It is obvious that $A_{1}^{\rho}= A_{1}^{\rho,\mathcal{R}}$. To prove the rest of the theorem, it suffices to show
\begin{equation*}
[\omega]_{A_{p}^{\rho,\theta,\mathcal{R}}}\leq[\omega]_{A_{p}^{\rho,p\theta}}^{1/p},~
[\omega]_{A_{q}^{\rho,q\theta}}\leq\Big(\frac{p'}{p'-q'}\Big)^{q-1}[\omega]_{A_{p}^{\rho,\theta,\mathcal{R}}}.
\end{equation*}
For any measurable set $E\subset Q$,
\begin{align*}
|E|=\int_E\omega(x)^{1/p}\omega(x)^{-1/p}dx\leq\omega(E)^{1/p}\Big(\int_Q\omega(x)^{1-p'}dx\Big)^{1/p'}.
\end{align*}
Then
\begin{align*}
\frac{|E|}{|Q|\psi_\theta(Q)}\Big(\frac{\omega(Q)}{\omega(E)}\Big)^{1/p}&\leq
\frac{\omega(Q)^{1/p}}{|Q|\psi_\theta(Q)}\Big(\int_Q\omega(x)^{1-p'}dx\Big)^{1/p'}\\
&=\Big(\frac{1}{|Q|\psi_\theta(Q)}\int_Q\omega(x)dx\Big)^{1/p}
\Big(\frac{1}{|Q|\psi_\theta(Q)}\int_Q\omega(x)^{1-p'}dx\Big)^{1/p'}\\
&\leq[\omega]_{A_{p}^{\rho,p\theta}}^{1/p}.
\end{align*}
It follows that $\|\omega\|_{A_{p}^{\rho,\theta,\mathcal{R}}}\leq[\omega]_{A_{p}^{\rho,p\theta}}^{1/p}$. From the proof of Theorem \ref{another characterization restricted weak type}, we in fact prove that $[\omega]_{A_{p}^{\rho,\theta,\mathcal{R}}}\leq\|\omega\|_{A_{p}^{\rho,\theta,\mathcal{R}}}$. Hence, we have $[\omega]_{A_{p}^{\rho,\theta,\mathcal{R}}}\leq[\omega]_{A_{p}^{\rho,p\theta}}^{1/p}$.

In addition, for any cube $Q$, using \cite[Exercise 1.1.11]{Gra}, we have
\begin{equation*}
\int_Q(\omega(x)^{-1})^{q'}\omega(x)dx\leq\frac{p'}{p'-q'}\omega(Q)^{1-\frac{q'}{p'}}
\|\chi_Q\omega^{-1}\|_{L^{p',\infty}(\omega)}^{q'},~0<q'<p'.
\end{equation*}
This, together with $q'(q-1)=q$, allows us to get that
\begin{align*}
&\Big(\frac{1}{|Q|\psi_\theta(Q)}\int_Q\omega(x)dx\Big)
\Big(\frac{1}{|Q|\psi_\theta(Q)}\int_Q\omega(x)^{-\frac{1}{q-1}}dx\Big)^{q-1}\\
&\quad=\frac{\omega(Q)}{|Q|^q\psi_\theta(Q)^q}\Big(\int_Q(\omega(x)^{-1})^{q'}\omega(x)dx\Big)^{q-1}\\
&\quad\leq\Big(\frac{p'}{p'-q'}\Big)^{q-1}
\frac{\omega(Q)^{\frac{q}{p}}\|\chi_Q\omega^{-1}\|_{L^{p',\infty}(\omega)}^q}{(|Q|\psi_\theta(Q))^q}\\
&\quad\leq\Big(\frac{p'}{p'-q'}\Big)^{q-1}[\omega]_{A_{p}^{\rho,\theta,\mathcal{R}}}^q.
\end{align*}
Therefore, we have prove that $[\omega]_{A_{q}^{\rho,q\theta}}\leq\Big(\frac{p'}{p'-q'}\Big)^{q-1}
[\omega]_{A_{p}^{\rho,\theta,\mathcal{R}}}^q$.
\end{proof}

\subsection{Sparse operators associated to critical radius functions}
In this subsection, we introduce the power tool sparse operator that we use. Recall that a collection of cubes $\mathcal{D}$ in $\mathbb{R}^d$ is called a dyadic grid if it satisfies:\\
{\rm (1)} for any $Q\in\mathcal{D}$, $l(Q)=2^k$ for some $k\in\mathbb{Z}$;\\
{\rm (2)} for any $Q_1, Q_2\in\mathcal{D}$, $Q_1\cap Q_2=\{Q_1,Q_2,\emptyset\}$;\\
{\rm (3)} for any $k\in\mathbb{Z}$, the family $\mathcal{D}_k=\{Q\in\mathcal{D}:l(Q)=2^k\}$ forms a partition of $\mathbb{R}^d$.

An important sub-family of dyadic grids is sparse family, where its definition is given below.

\begin{definition}\rm(\cite{Ler3})
Let $\eta\in(0,1)$. A subset $\mathcal{S}\subseteq\mathcal{D}$ is said to be a $\eta$-sparse family with $\eta\in(0,1)$ if for any cube $Q\in\mathcal{S}$, there exists a measurable subset $E_Q\subseteq Q$ such that $\eta|Q|\leq|E_Q|$, and the sets $\{E_Q\}_{Q\in\mathcal{S}}$ are mutually disjoint.
\end{definition}

Let $\rho$ be a critical radius function and $\mathcal{S}$ be a sparse family. Define
\begin{equation*}
\mathcal{A}_{\mathcal{S}}^{\rho,N}f(x):=\sum_{Q\in\mathcal{S}}\Big(\frac{1}{|3Q|}\int_{3Q}|f(y)|dy\Big)
\psi_N(Q)^{-1}\chi_Q(x),~N\geq0.
\end{equation*}
In \cite{BuiBD1}, the authors proved the following estimate for $\mathcal{A}_{\mathcal{S}}^{\rho,N}$.
\begin{lemma}\rm(\cite{BuiBD1})\label{weighted estimates for sparse operator}
Suppose that $\omega\in A_{p}^{\rho,\theta}$ for $1<p<\infty$ and $\theta\geq0$. Then
\begin{equation*}
\|\mathcal{A}_{\mathcal{S}}^{\rho,N}f\|_{L^p(\omega)}\lesssim[\omega]_{A_{p}^{\rho,\theta}}
^{\max\{1,\frac{1}{p-1}\}}\|f|_{L^p(\omega)}
\end{equation*}
provided that $N\geq\theta\max\{1,\frac{1}{p-1}\}$.
\end{lemma}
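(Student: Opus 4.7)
The plan is to follow the standard sharp-bound template for sparse operators (as in Moen, Cruz-Uribe--Martell--P\'erez, or Hyt\"onen--P\'erez), adapting the bookkeeping to the presence of the auxiliary factors $\psi_\theta(Q)$ from the $A_p^{\rho,\theta}$ hypothesis and $\psi_N(Q)^{-1}$ carried by the operator itself. By duality, it suffices to show
\begin{equation*}
\Lambda(f,g):=\sum_{Q\in\mathcal{S}}\langle|f|\rangle_{3Q}\,\langle g\omega\rangle_Q\,|Q|\,\psi_N(Q)^{-1}
\lesssim [\omega]_{A_p^{\rho,\theta}}^{\max\{1,1/(p-1)\}}\|f\|_{L^p(\omega)}\|g\|_{L^{p'}(\omega)}
\end{equation*}
for nonnegative $f,g$, since the left-hand side coincides with $\int(\mathcal{A}_{\mathcal{S}}^{\rho,N}f)\,g\omega\,dx$.

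Setting $\sigma:=\omega^{1-p'}$ and pushing the averages onto the measures $d\sigma$ and $d\omega$, I rewrite the pairing as
\begin{equation*}
\Lambda(f,g)=\sum_{Q\in\mathcal{S}}\frac{\sigma(3Q)\,\omega(Q)}{|3Q|\,|Q|}\,|Q|\,\psi_N(Q)^{-1}\,\langle f\sigma^{-1}\rangle_{3Q}^{\sigma}\,\langle g\rangle_Q^\omega.
\end{equation*}
Combining $\sigma(3Q)\lesssim\sigma(Q)$ (a doubling-type fact valid since $\sigma\in A_{p'}^{\rho,\theta'}$ for some $\theta'$) with sparseness $|Q|\leq\eta^{-1}|E_Q|$, the task reduces to controlling a geometrically weighted Carleson sum. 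The crucial pointwise input is
$\langle\omega\rangle_Q\,\langle\sigma\rangle_Q^{p-1}\leq[\omega]_{A_p^{\rho,\theta}}\psi_\theta(Q)$,
which, together with a H\"older split of $\langle\sigma\rangle_Q\langle\omega\rangle_Q$ into suitable fractional powers, allows me to pair the resulting factors against the weighted maximal functions $M^\sigma(f\sigma^{-1})$ and $M^\omega g$ via the Carleson embedding on $d\sigma$ (for $p\geq 2$) or on $d\omega$ (for $1<p<2$), yielding the exponent $1$ or $1/(p-1)$ on $[\omega]_{A_p^{\rho,\theta}}$ respectively, i.e. the sharp Buckley exponent $\max\{1,1/(p-1)\}$.

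Each invocation of the $A_p^{\rho,\theta}$ estimate injects one factor of $\psi_\theta(Q)$, which gets raised to the same asymmetric power $\max\{1,1/(p-1)\}$ as the weight constant. The lone factor $\psi_N(Q)^{-1}$ from the operator must therefore absorb $\psi_\theta(Q)^{\max\{1,1/(p-1)\}}$ uniformly in $Q$; since $(1+r_Q/\rho(x_Q))^\alpha$ is multiplicative in the exponent, this is precisely what the standing hypothesis $N\geq\theta\max\{1,1/(p-1)\}$ guarantees.

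The main obstacle I anticipate is exactly this absorption bookkeeping: I have to track the precise power of $\psi_\theta(Q)$ that accumulates through the H\"older split and through the auxiliary doubling comparison $\sigma(3Q)\sim\sigma(Q)$, and check that no further spurious $\psi_\theta$ power sneaks in during the Carleson embedding (which in the $\rho$-localized setting requires the relevant testing inequality for the weighted maximal operator $M^\sigma$ with a $\psi_\theta$-tax). Once that accounting is in place, the remainder is a direct transplant of the classical Euclidean sharp sparse estimate.
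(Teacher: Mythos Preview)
The paper does not prove this lemma --- it is quoted verbatim from \cite{BuiBD1} and used as a black box. Your outline is the standard sharp-sparse template and is, in substance, what that reference carries out.

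One caveat on your sketch: the doubling claim ``$\sigma(3Q)\lesssim\sigma(Q)$, valid since $\sigma\in A_{p'}^{\rho,\theta'}$'' is not literally true in this setting. Weights in $A_p^{\rho,\theta}$ are only doubling \emph{up to a $\psi_\theta$ penalty}, i.e.\ one has $\sigma(3Q)\lesssim\psi_{\theta'}(Q)^{c}\,\sigma(Q)$ with a genuine positive power $c$ depending on $p$ and $\theta$. If you invoke doubling in that form, an extra $\psi$ factor enters the bookkeeping on top of the one produced by the $A_p^{\rho,\theta}$ inequality, and you would then need a larger $N$ than the stated threshold to absorb it. The clean fix is to avoid $\sigma$-doubling altogether: since $E_Q\subset Q\subset 3Q$ with $|E_Q|\geq\eta|Q|=(\eta/3^n)|3Q|$, the collection $\{3Q:Q\in\mathcal{S}\}$ is itself $(\eta/3^n)$-sparse with the same disjoint sets $E_Q$, and $\psi_\theta(3Q)\sim\psi_\theta(Q)$ by the critical-radius property \eqref{critical radius function}. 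Running the entire argument with averages over $3Q$ throughout, the dyadic Carleson embedding (which needs no doubling, only a positive measure) then applies directly, and the $\psi_\theta$ accounting collapses to a single use of the $A_p^{\rho,\theta}$ bound raised to the Buckley power --- whence the threshold $N\geq\theta\max\{1,1/(p-1)\}$ drops out exactly.
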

We end this subsection by recalling a principle of sparse domination.
\begin{proposition}{\rm(\cite{Ler1})}\label{principle}
Let $T$ be a sublinear operator which is of weak type $(r,r)$ with $1\leq r<\infty$. Suppose that $\mathcal{M}_T$ is of weak type $(r,r)$. Then there is a constant $C>0$ such that for all $f\in L^r$ supported in $3Q_0$ for some $Q_0\in\mathcal{D}$, there is a sparse family $\mathcal{S}\subseteq\mathcal{D}(Q_0)$(depending on $f$) such that
\begin{equation*}
|Tf|\chi_{Q_0}\leq C\sum_{P\subset\mathcal{S}}\langle|f|^r\rangle_{3P}^{1/r}\chi_P.
\end{equation*}
Here $\mathcal{D}(Q_0)$ is the set of all dyadic cubes with respect to $Q_0$,
\begin{equation*}
\mathcal{M}_Tf(x):=\sup_{Q\ni x}\|T(f\chi_{\mathbb{R}^n\backslash3Q})\|_{L^\infty(Q)},
\end{equation*}
where the supremum is taken over all cubes $Q\subset\mathbb{R}^n$ containing $x$.
\end{proposition}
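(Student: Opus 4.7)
The plan is to follow the sparse-domination strategy that runs through this paper. Writing $T = \mathcal{V}_\varrho(e^{-t\mathcal{L}})$, I would first establish a pointwise sparse bound
\[
|Tf(x)| \lesssim \sum_{j=1}^{3^n}\mathcal{A}_{\mathcal{S}_j}^{\rho,N}|f|(x) \quad\text{a.e.,}
\]
for some finite collection of sparse families $\mathcal{S}_j$ and a parameter $N = N(\theta,p)$ to be fixed below. This follows from Proposition \ref{principle} applied with $r = 1$, together with the weak type $(1,1)$ bound of Betancor et al.\ \cite{BetFHR} for $T$ and a weak type $(1,1)$ estimate for the associated grand maximal operator $\mathcal{M}_T$; the localization factor $\psi_N(Q)^{-1}$ is produced by splitting on cubes according to the ratio $r_Q/\rho(x_Q)$ and invoking the perturbation formula of \cite{Dz}, exactly as in the strong-type analysis of $\mathcal{V}_\varrho(e^{-t\mathcal{L}})$ carried out in \cite{WW} and for $\mathcal{M}_T$ in \cite{BuiBD1}.

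Given the sparse domination, by sublinearity it suffices to prove
\[
\|\mathcal{A}_\mathcal{S}^{\rho,N}f\|_{L^{p,\infty}(\omega)} \lesssim [\omega]_{A_p^{\rho,\theta,\mathcal{R}}}^{p+1}\|f\|_{L^{p,1}(\omega)}
\]
for a single sparse family $\mathcal{S}$. A layer-cake decomposition combined with the sublinearity of $\mathcal{A}_\mathcal{S}^{\rho,N}$ reduces the target estimate to testing on $f = \chi_E$, for which $\|f\|_{L^{p,1}(\omega)} \sim \omega(E)^{1/p}$, and hence to proving the distributional bound
\[
\omega\bigl(\bigl\{x : \mathcal{A}_\mathcal{S}^{\rho,N}\chi_E(x) > \lambda\bigr\}\bigr) \lesssim [\omega]_{A_p^{\rho,\theta,\mathcal{R}}}^{p(p+1)}\lambda^{-p}\omega(E), \quad \lambda > 0.
\]
The heart of the proof is a principal-cubes stopping time tailored to the Schr\"odinger averages: I select $\mathcal{P} \subseteq \mathcal{S}$ so that for every $Q \in \mathcal{S}$ its principal ancestor $\pi(Q) = P$ satisfies $\langle\chi_E\rangle_{3Q}\psi_N(Q)^{-1} \leq 2\langle\chi_E\rangle_{3P}\psi_N(P)^{-1}$, after which the sparse sum telescopes into a sum over $\mathcal{P}$ whose selection sets are pairwise disjoint. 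This reduces the level-set estimate to one for the fractional maximal operator $M^{\rho,N}$, where Theorem \ref{characterization restricted weak type} delivers the needed restricted weak-type bound $\|M^{\rho,N}\chi_E\|_{L^{p,\infty}(\omega)} \lesssim [\omega]_{A_p^{\rho,\theta,\mathcal{R}}}\omega(E)^{1/p}$; iterating this estimate along the principal tree and applying a Kolmogorov-type truncation accumulates the additional factor $[\omega]_{A_p^{\rho,\theta,\mathcal{R}}}^p$, yielding the total exponent $p+1$.

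The main obstacle is the interplay between the $\psi_N(Q)^{-1}$ weight and the principal-cubes machinery. Since $\psi_N$ is not doubling with respect to dyadic inclusion---it can grow or shrink depending on how $Q$ is situated relative to $\rho(x_Q)$---the stopping condition must be imposed on the combined quantity $\langle\chi_E\rangle_{3Q}\psi_N(Q)^{-1}$ rather than on the average alone, and establishing the requisite geometric decay between consecutive generations forces $N$ to be chosen large relative to the constant $N_0$ in \eqref{critical radius function} and to $\theta$. A secondary subtlety is ensuring that the accumulation of $A_p^{\rho,\theta,\mathcal{R}}$ constants along the principal tree collapses to precisely the exponent $p+1$, rather than to a larger power; this seems to require a duality passage into $L^{p',\infty}(\omega)$ on the adjoint sparse form and a careful Carleson-type testing against indicators that simultaneously exploits the restricted weak-type characterization from Theorem \ref{characterization restricted weak type}.
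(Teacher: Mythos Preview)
You have misidentified the target statement. Proposition \ref{principle} is an \emph{abstract} sparse-domination principle for a generic sublinear operator $T$: under weak type $(r,r)$ hypotheses on $T$ and on its grand maximal function $\mathcal{M}_T$, it asserts the existence of a sparse family dominating $|Tf|$ on $Q_0$. The paper does not prove this proposition at all---it is quoted from Lerner \cite{Ler1} and used as a black box. There is therefore nothing in the paper to compare your argument against.

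More seriously, your proposal does not attempt to prove Proposition \ref{principle}. You immediately specialize $T$ to $\mathcal{V}_\varrho(e^{-t\mathcal{L}})$, and then in your very first step you write ``This follows from Proposition \ref{principle} applied with $r=1$''---so you are invoking the proposition you were asked to prove. What you have outlined is a strategy for Theorem \ref{quantitative restricted weak type} (the restricted weak type $(p,p)$ bound for the variation operator), which in the paper is handled in Section 6 via the sparse bound of Theorem \ref{restricted weak type estimate of sparse operator} combined with the localization argument of Section 3. If your intention was to prove Theorem \ref{quantitative restricted weak type}, note also that the paper's route is simpler than your principal-cubes plan: it obtains the exponent $p+1$ directly by duality, the $A_p^{\rho,\theta,\mathcal{R}}$ relation \eqref{new classes weights appear}, and a single appeal to $M^{\rho,\theta}$ and the centered maximal function $M_\omega^c$, rather than by an iterated stopping-time construction.

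If you do wish to prove Proposition \ref{principle} itself, the argument (due to Lerner, later refined by Lacey and by Lerner--Nazarov) is a recursive Calder\'on--Zygmund type selection on $Q_0$: one shows that the exceptional set where $|Tf|$ exceeds a fixed multiple of $\langle|f|^r\rangle_{3Q_0}^{1/r}$ has measure at most $\tfrac12|Q_0|$, using the weak $(r,r)$ bounds for $T$ and $\mathcal{M}_T$, and then iterates on the dyadic children covering the exceptional set. None of this involves critical radius functions, Schr\"odinger operators, or the weight classes $A_p^{\rho,\theta,\mathcal{R}}$.
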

\subsection{Orlicz maximal operators} Recall that $\Phi:[0,\infty)\rightarrow[0,\infty)$ is called a Young function if it is continuous, increasing, convex and satisfies $\Phi(0)=0$ and $\lim_{t\rightarrow\infty}\Phi(t)/t=\infty$. The corresponding complementary function of $\Phi$, denoted by $\bar{\Phi}:[0,\infty)\rightarrow[0,\infty)$, is given by
\begin{equation*}
\bar{\Phi}(t)=\sup_{s>0}\{st-\Phi(s)\}.
\end{equation*}
Let $\Phi$ be a Young function. The localized Orlicz norm $\|f\|_{\Phi,Q}$ is defined by
\begin{equation*}
\|f\|_{\Phi,Q}:=\inf\Big\{\lambda>0:\frac{1}{|Q|}\int_Q\Phi\Big(\frac{|f(x)|}{\lambda}\Big)dx\leq1\Big\}.
\end{equation*}
Then the Orlicz maximal operator $M_\Phi$ is defined by
\begin{equation*}
M_\Phi f(x):=\sup_{Q\ni x}\|f\|_{\Phi,Q}.
\end{equation*}
The $L^p$-boundedness of $M_\Phi$ can be characterized via the $B_p$ condition, that is,
\begin{lemma}{\rm(cf. \cite{CMP11,HyPe})}\label{boundedness of Orlicz maximal operator}
Let $1<p<\infty$. $M_\Phi$ is bounded on $L^p(\mathbb{R}^n)$ if and only if $\Phi\in B_p$. Moreover, $\|M_\Phi\|_{L^p(\mathbb{R}^n)\rightarrow L^p(\mathbb{R}^n)}\leq C_{n,p}[\Phi]_{B_p}^{1/p}$, where
\begin{equation*}
[\Phi]_{B_p}:=\int_1^\infty\frac{\Phi(t)}{t^p}\frac{dt}{t}<\infty.
\end{equation*}
\end{lemma}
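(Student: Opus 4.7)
The plan is to establish the two directions of the equivalence separately, keeping track of the quantitative constant in the sufficient direction so as to recover the advertised $[\Phi]_{B_p}^{1/p}$ dependence.

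For the sufficient direction, the strategy is to reduce the Orlicz maximal operator to the Hardy--Littlewood maximal operator $M$ via a good-$\lambda$ inclusion. Given $\lambda>0$, I would argue that
\begin{equation*}
\{M_\Phi f>2\lambda\}\subseteq\bigl\{M\bigl(\Phi(|f|/(2\lambda))\chi_{\{|f|>\lambda\}}\bigr)>1/2\bigr\}.
\end{equation*}
Indeed, if $\|f\|_{\Phi,Q}>2\lambda$ then the Luxemburg integral $|Q|^{-1}\int_Q\Phi(|f|/(2\lambda))\,dx>1$, while the contribution from $\{|f|\le\lambda\}$ is bounded by $\Phi(1/2)$; after a harmless normalization of $\Phi$ the latter can be forced below $1/2$, so at least half of the Luxemburg integral comes from $\{|f|>\lambda\}$, driving $M$ above $1/2$ on $Q$.

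Applying the weak-type $(1,1)$ bound for $M$ yields $|\{M_\Phi f>2\lambda\}|\lesssim_n\int_{\{|f|>\lambda\}}\Phi(|f|/(2\lambda))\,dx$. Inserting this into the layer-cake formula $\|M_\Phi f\|_p^p=p\int_0^\infty\lambda^{p-1}|\{M_\Phi f>\lambda\}|\,d\lambda$, swapping the order of integration, and performing the change of variables $u=|f(x)|/(2\lambda)$ leaves
\begin{equation*}
\|M_\Phi f\|_p^p\;\lesssim_{n,p}\;\|f\|_p^p\int_{1/2}^{\infty}\frac{\Phi(u)}{u^{p}}\frac{du}{u}.
\end{equation*}
The integral on the right differs from $[\Phi]_{B_p}$ only through a tail on $(1/2,1)$; convexity of $\Phi$ together with $\Phi(1)\lesssim_p[\Phi]_{B_p}$ (obtained by bounding $\Phi(1)\int_1^2 u^{-p-1}du\le\int_1^2\Phi(u)u^{-p-1}du$) collapses that tail into the advertised constant, producing $\|M_\Phi\|_{L^p\to L^p}\le C_{n,p}[\Phi]_{B_p}^{1/p}$.

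For the necessary direction, I would test $M_\Phi$ on indicators of cubes. A direct computation with the Luxemburg functional gives $\|\chi_Q\|_{\Phi,Q'}=1/\Phi^{-1}(|Q'|/|Q\cap Q'|)$ whenever $Q\subseteq Q'$, so that $M_\Phi\chi_Q(x)\gtrsim1/\Phi^{-1}(2^{kn})$ for $x\in 2^kQ\setminus 2^{k-1}Q$. The assumed $L^p$ boundedness then forces $\sum_{k\ge 1}2^{kn}/\Phi^{-1}(2^{kn})^p<\infty$; substituting $s=\Phi(u)$ and integrating by parts converts this into $\int_1^\infty\Phi(u)u^{-p-1}du<\infty$, i.e.\ $\Phi\in B_p$. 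The main obstacle I anticipate is the quantitative accounting in the sufficient direction: since $\Phi(t)/t^p$ is typically not integrable near $0$, the change-of-variables step leaves a residual tail that must be re-absorbed without inflating $[\Phi]_{B_p}$, and this requires a careful normalization of the factor $2\lambda$ and of $\Phi(1/2)$ in the good-$\lambda$ inclusion.
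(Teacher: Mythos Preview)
The paper does not supply its own proof of this lemma; it is quoted from \cite{CMP11,HyPe} as a known result and used as a black box. Your sketch reproduces the classical P\'erez argument recorded in those references and is correct in both directions.

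On the normalization concern you flagged: the cleanest resolution is to rescale the \emph{argument} of $\Phi$ rather than its values. If $\Phi(1)>1$, set $\tilde\Phi(t):=\Phi(t/a)$ with $a:=\Phi^{-1}(1)\in(0,1)$; then $\tilde\Phi(1)=1$ (hence $\tilde\Phi(1/2)\le 1/2$ by convexity and $\tilde\Phi(0)=0$), one checks directly that $M_\Phi=a\,M_{\tilde\Phi}$, and
\[
[\tilde\Phi]_{B_p}=a^{-p}\int_{1/a}^\infty\frac{\Phi(s)}{s^{p+1}}\,ds\le a^{-p}[\Phi]_{B_p}.
\]
Running your good-$\lambda$ argument for $\tilde\Phi$ gives $\|M_{\tilde\Phi}\|_{L^p\to L^p}^p\le C_{n,p}[\tilde\Phi]_{B_p}$, and multiplying through by $a^p$ recovers $\|M_\Phi\|_{L^p\to L^p}^p\le C_{n,p}[\Phi]_{B_p}$ with the same absolute constant. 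With $\tilde\Phi(1)=1$ in hand, the tail $\int_{1/2}^1\tilde\Phi(u)u^{-p-1}\,du\le c_p\tilde\Phi(1)\lesssim_p[\tilde\Phi]_{B_p}$ is absorbed exactly as you wrote, so there is no residual obstacle.
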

\subsection{Lemmata}
In this subsection, we recall some useful lemmas. The first lemma is the quantitative weighted bounds and two-weight inequality of Hardy-Littlewood maximal operators $M^{\rho,\theta}$ associated with critical radius function.
\begin{lemma}{\rm(\cite{BonHQ,BuiBD,WSS})}\label{quantitative weighted bounds for Hardy-Littlewood maximal operators}
Let $\rho$ be critical radius function, $\theta\geq0$ and $1<p<\infty$. \\
$(1)$~$\|M^{\rho,\theta} f\|_{L^p(\omega)}\lesssim [\omega]_{A_{p}^{\rho,\theta(p-1)}}^{1/(p-1)}\|f\|_{L^p(\omega)}$.\\
$(2)$~Let $1<p<\infty$, $\theta\geq0$. If $\Psi$ is a Young function such that $\bar{\Psi}\in B_{p}$ and a pair $(\mu,\nu)$ of weights satisfies
\begin{equation*}
[\mu,\nu]_{\Psi,p,\rho,\theta}:=\sup_Q\|\nu^{1/p}\|_{p,Q}\|\mu^{-1/p}\|_{\Psi,Q}\psi_\theta(Q)^{-1}<\infty.
\end{equation*}
Then there exists $\sigma_0$ such that for all $\sigma\geq \sigma_0+\sigma_0(\theta)$,
\begin{equation*}
\|M^{\rho,\sigma}f\|_{L^p(\nu)}\lesssim[\mu,\nu]_{\Psi,p,\rho,\theta}[\bar{\Psi}]_{B_p}^{1/p}
\|f\|_{L^p(\mu)}.
\end{equation*}
\end{lemma}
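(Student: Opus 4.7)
The plan is to combine a sparse domination for $\mathcal{V}_\varrho(e^{-t\mathcal{L}})$ with a quantitative restricted weak-type estimate for the resulting sparse operator $\mathcal{A}_{\mathcal{S}}^{\rho,N}$, with $N$ chosen sufficiently large in terms of $\theta$ and $p$. The first step is a pointwise bound
\begin{equation*}
|\mathcal{V}_\varrho(e^{-t\mathcal{L}})f(x)| \lesssim \mathcal{A}_{\mathcal{S}}^{\rho,N}|f|(x),
\end{equation*}
which I expect to obtain in the same way as in the proof of Theorem \ref{quantitative Ap}: verify the hypotheses of Proposition \ref{principle} for $\mathcal{V}_\varrho(e^{-t\mathcal{L}})$ and its grand maximal truncation using the Schr\"odinger heat-kernel estimates, where the critical-radius perturbation supplies the factors $\psi_N(Q)^{-1}$. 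A standard localization argument over a partition by dyadic cubes reduces matters to proving the restricted weak-type bound for $\mathcal{A}_{\mathcal{S}}^{\rho,N}$ with the asserted dependence $[\omega]_{A_p^{\rho,\theta,\mathcal{R}}}^{p+1}$.

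Since $\mathcal{A}_{\mathcal{S}}^{\rho,N}$ is positive and sublinear, the next reduction is to characteristic functions. Writing any $f\in L^{p,1}(\omega)$ atomically as $f=\sum_k c_k\chi_{E_k}$ with $\sum_k c_k\omega(E_k)^{1/p}\lesssim\|f\|_{L^{p,1}(\omega)}$ and appealing to Lemma \ref{summation lemma} to sum level-set contributions in $L^{p,\infty}(\omega)$, it suffices to establish
\begin{equation*}
\|\mathcal{A}_{\mathcal{S}}^{\rho,N}\chi_E\|_{L^{p,\infty}(\omega)} \lesssim [\omega]_{A_p^{\rho,\theta,\mathcal{R}}}^{p+1}\,\omega(E)^{1/p}
\end{equation*}
for every measurable $E$.

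For the endpoint bound on $\chi_E$, fix $\lambda>0$ and let $\Omega_\lambda:=\{\mathcal{A}_{\mathcal{S}}^{\rho,N}\chi_E>\lambda\}$. I would carry out a principal-cube selection inside $\mathcal{S}$, extracting pairwise disjoint $\{P_j\}\subseteq\mathcal{S}$ with $\Omega_\lambda\subseteq\bigcup_j P_j$ and a stopping inequality of the form $\langle\chi_E\rangle_{3P_j}\psi_N(P_j)^{-1}\gtrsim\lambda$ after summing the upward geometric series over ancestors in $\mathcal{S}$. Applying the reformulation of the $A_p^{\rho,\theta,\mathcal{R}}$ condition in Theorem \ref{another characterization restricted weak type}(3) with the test set $E$ against each $P_j$ yields
\begin{equation*}
\omega(P_j)^{1/p}\,|E\cap P_j| \lesssim [\omega]_{A_p^{\rho,\theta,\mathcal{R}}}\,|P_j|\,\psi_\theta(P_j)\,\omega(E)^{1/p}.
\end{equation*}
Raising this to the $p$th power, inserting the stopping inequality $|E\cap P_j|\gtrsim\lambda|P_j|\psi_N(P_j)$, and choosing $N$ large enough relative to $\theta$ (so that $\psi_\theta(P_j)^p/\psi_N(P_j)^p$ is a convergent geometric factor along the sparse tree) should give $\lambda^p\omega(\Omega_\lambda)\lesssim[\omega]_{A_p^{\rho,\theta,\mathcal{R}}}^{p+1}\omega(E)$; one factor of $[\omega]_{A_p^{\rho,\theta,\mathcal{R}}}$ comes from this single weight-to-Lebesgue conversion and the remaining $p$ factors from iterating when summing over $j$ and collecting $p$th powers.

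The main obstacle is that $A_p^{\rho,\theta,\mathcal{R}}$ is a genuinely Lorentz-based (restricted) condition, so no reverse-H\"older or full $A_\infty$ inequality is available on the stopping cubes $P_j$; the passage from $\omega(P_j)$ to $\omega(E)$ must be driven entirely through Theorem \ref{another characterization restricted weak type} and the characteristic-function nature of $\chi_E$, which is what forces the direct stopping argument rather than the cleaner Coifman--Fefferman-type approach used for $A_p^{\rho,\theta}$ in Lemma \ref{weighted estimates for sparse operator}. A secondary technical point is calibrating $N$ against $\theta$, $N_0$ and $p$ so that the geometric series in $\psi_N/\psi_\theta$ absorbs the critical-radius corrections; this is the mechanism that ultimately produces the clean exponent $p+1$ without any $\log$ or $\max\{1,1/(p-1)\}$ loss.
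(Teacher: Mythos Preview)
Your proposal addresses the wrong statement. The lemma in question concerns the maximal operators $M^{\rho,\theta}$ and $M^{\rho,\sigma}$: part (1) is a quantitative strong $L^p(\omega)$ bound for $M^{\rho,\theta}$ with $\omega\in A_p^{\rho,\theta(p-1)}$, and part (2) is a two-weight $L^p(\mu)\to L^p(\nu)$ bound for $M^{\rho,\sigma}$ under a bump condition. Neither part involves the variation operator $\mathcal{V}_\varrho(e^{-t\mathcal{L}})$, sparse operators $\mathcal{A}_{\mathcal{S}}^{\rho,N}$, the restricted weight class $A_p^{\rho,\theta,\mathcal{R}}$, or Lorentz spaces $L^{p,1}$, $L^{p,\infty}$. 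What you have sketched is an argument for Theorem~\ref{quantitative restricted weak type} (equivalently Theorem~\ref{th6.1}), not for this lemma.

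In the paper, this lemma is not proved at all: it is quoted from \cite{BonHQ,BuiBD,WSS} and used as a black box (for instance in the estimate of $I_1$ in the proof of Theorem~\ref{general quantitative Ap} and the estimate of $J$ in the proof of Theorem~\ref{general quantitative two weight}). If you are asked to supply a proof, the argument has nothing to do with $\mathcal{V}_\varrho$; it is a direct maximal-function estimate. Part (1) is obtained in \cite{BuiBD} via a covering/sparse argument for $M^{\rho,\theta}$ adapted to the critical-radius scale, and part (2) in \cite{BonHQ,WSS} by combining a local Orlicz H\"older inequality on each cube with the $B_p$ bound of Lemma~\ref{boundedness of Orlicz maximal operator} for $M_{\bar\Psi}$, the factor $\psi_\theta(Q)^{-1}$ in the bump condition being absorbed by choosing $\sigma$ large enough. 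Your stopping-time/principal-cube machinery and the $A_p^{\rho,\theta,\mathcal{R}}$ condition are simply irrelevant here.
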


Finally, we state the covering lemma in \cite{DzZ}.
\begin{lemma}{\rm(\cite{DzZ})}\label{covering lemma}
There exists a sequence of points $x_j$ in $\Rn$, so that the family $\{B_j:=B(x_j,\rho(x_j))\}_{j\in\mathbb{Z}^+}$ satisfies:\\
$(i)$~$\bigcup_{j\in\mathbb{Z}^+}B_j=\Rn$;\\
$(ii)$~For every $\sigma\geq1$, there exists constants $C,N>0$ such that for any $x\in\Rn$, $\sum_{j\in\mathbb{Z}^+}\chi_{\sigma B_j}(x)\leq C\sigma^N$.
\end{lemma}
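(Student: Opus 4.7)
The plan is to run a Vitali-style selection on the family of shrunken balls $\{B(x,\rho(x)/5)\}_{x\in\mathbb{R}^n}$, upgrade the disjoint subfamily to a cover by dilating, and then extract the controlled-overlap bound from the quantitative comparability (2.1) of $\rho$ at nearby points. First I would choose, via a standard maximal/Zorn argument, a countable collection of centers $\{x_j\}$ such that the balls $B(x_j,\rho(x_j)/5)$ are pairwise disjoint while being maximal with this property. Maximality forces: for any $y\in\mathbb{R}^n$ there exists an index $j$ with $B(y,\rho(y)/5)\cap B(x_j,\rho(x_j)/5)\neq\emptyset$, which by (2.1) (applied to compare $\rho(y)$ and $\rho(x_j)$ at distance $\le(\rho(y)+\rho(x_j))/5$) yields $|y-x_j|\le C\rho(x_j)$ with $C$ absolute. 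After re-labelling this amounts to enlarging the selection radius by a fixed constant, so, upon replacing $\rho$ by $c\rho$ in the selection step (note that Remark \ref{remark 2.4}(2) shows the localized weight class is insensitive to such a rescaling), one obtains $y\in B(x_j,\rho(x_j))=B_j$, proving (i).

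For (ii), fix $\sigma\ge 1$ and $x\in\mathbb{R}^n$, and let $J(x):=\{j:x\in\sigma B_j\}$. For each $j\in J(x)$ we have $|x-x_j|\le\sigma\rho(x_j)$, so the two inequalities in (2.1) at $x_j$ give
\begin{equation*}
C_0^{-1}\rho(x_j)(1+\sigma)^{-N_0}\le\rho(x)\le C_0\rho(x_j)(1+\sigma)^{N_0/(N_0+1)},
\end{equation*}
whence $\rho(x_j)\sim_\sigma\rho(x)$ with explicit powers of $(1+\sigma)$. Consequently $\sigma\rho(x_j)\le C\sigma(1+\sigma)^{N_0}\rho(x)\lesssim\sigma^{N_0+1}\rho(x)$, so every selected ball $B(x_j,\rho(x_j)/5)$ with $j\in J(x)$ is contained in the single ball $B(x,C\sigma^{N_0+1}\rho(x))$, while each such $B(x_j,\rho(x_j)/5)$ has volume bounded below by a constant multiple of $\rho(x)^n(1+\sigma)^{-nN_0}$.

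The overlap count is then closed by a volume-packing argument: since the balls $\{B(x_j,\rho(x_j)/5)\}_{j\in J(x)}$ are pairwise disjoint and all sit inside one ball of volume $\lesssim\sigma^{n(N_0+1)}\rho(x)^n$ while each has volume $\gtrsim\sigma^{-nN_0}\rho(x)^n$, one obtains
\begin{equation*}
\#J(x)\lesssim\sigma^{n(N_0+1)}\cdot\sigma^{nN_0}=\sigma^{n(2N_0+1)},
\end{equation*}
which is exactly $\sum_j\chi_{\sigma B_j}(x)\le C\sigma^N$ with $N=n(2N_0+1)$.

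The one genuine obstacle is that $\rho$ may be unbounded and can shrink to zero, so the classical Vitali lemma (which requires bounded radii) does not apply verbatim; I would handle this either by running the greedy/maximal selection directly via Zorn's lemma (as sketched above) or by localising to dyadic annuli $\{2^k\le|x|<2^{k+1}\}$, on each of which (2.1) makes $\rho$ comparable to a constant and the standard Vitali selection applies, and then merging the families. Both routes rely only on (2.1), so no additional hypothesis beyond the definition of critical radius function is needed.
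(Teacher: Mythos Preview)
The paper does not give its own proof of this lemma; it is quoted verbatim from \cite{DzZ} as a known covering result, so there is nothing in the paper to compare your argument against. Your proof is correct and is in fact the standard one: a Vitali-type maximal disjoint selection of the shrunken balls $B(x,\epsilon\rho(x))$, followed by the two-sided estimate \eqref{critical radius function} to show that (a) the dilates cover $\mathbb{R}^n$ and (b) the $\sigma$-dilates have overlap controlled by a volume-packing count.

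Two minor remarks. First, the fix for (i) is more cleanly stated as ``choose the selection parameter $\epsilon$ small enough (depending on $C_0,N_0$)'' rather than ``replace $\rho$ by $c\rho$''; the reference to Remark~\ref{remark 2.4}(2) is irrelevant here since the lemma involves only $\rho$, not any weight class. Second, of the two routes you propose for the existence of the maximal disjoint family, the Zorn argument is the clean one (any pairwise disjoint family of balls in $\mathbb{R}^n$ is automatically countable); the dyadic-annuli route would need extra care when merging families across annuli, since disjointness may fail at the interfaces.
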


\section{Quantitative weighted strong type estimates for variation operators}
This section is concerned with the proofs of Theorem $\ref{quantitative Ap}$ and Theorem $\ref{quantitative two weight}$. Instead of proving Theorem $\ref{quantitative Ap}$ and Theorem $\ref{quantitative two weight}$ directly, we prove more general results, which have its own interests. To be more precise, we consider a non-negative self-adjoint operator $\mathcal{L}$ defined on $L^2(\mathbb{R}^n)$. Thus, $\mathcal{L}$ generates a semigroup $e^{-t\mathcal{L}}$ on $L^2(\mathbb{R}^n)$. Denote the kernel of $e^{-t\mathcal{L}}$ by $p_t(x,y)$, which merely satisfies the following upper bound:
\begin{align}\label{upper bound}
|p_t(x,y)|\leq\frac{C}{t^{n/2}}e^{-\frac{|x-y|^2}{ct}}\Big(1+\frac{\sqrt{t}}{\rho(x)}+
\frac{\sqrt{t}}{\rho(y)}\Big)^{-N},~x,y\in\mathbb{R}^n,~t>0.
\end{align}

We prove the following two theorems. Unlike the case of Schr\"{o}dinger operator, the kernels of heat semigroup do not posses any regularity conditions on spacial variables.
\begin{theorem}\label{general quantitative Ap}
Let $\varrho>2$, $\theta\geq0$ and $\rho$ be a critical radius function. Suppose that $\mathcal{V}_\varrho(e^{-t\mathcal{L}})$ is of weak type $(1,1)$ and bounded on $L^2(\mathbb{R}^n)$. Then for $1<p<\infty$ and $\omega\in A_p^{\rho,\theta}$,
\begin{equation*}
\|\mathcal{V}_\varrho(e^{-t\mathcal{L}})f\|_{L^p(\omega)}
\lesssim[\omega]_{A_p^{\rho,\theta}}^{\max\{1,1/(p-1)\}}\|f\|_{L^p(\omega)}.
\end{equation*}
\end{theorem}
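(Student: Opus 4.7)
The plan is to deduce Theorem~\ref{general quantitative Ap} from a pointwise sparse domination of Schr\"odinger type, namely
\begin{equation*}
|\mathcal{V}_\varrho(e^{-t\mathcal{L}})f(x)| \lesssim \sum_{Q\in\mathcal{S}}\langle|f|\rangle_{3Q}\,\psi_N(Q)^{-1}\chi_Q(x) = \mathcal{A}_{\mathcal{S}}^{\rho,N}f(x),
\end{equation*}
for a sparse family $\mathcal{S}$ depending on $f$ and $N$ chosen large in terms of $\theta$ and $p$. Once this is in hand, Lemma~\ref{weighted estimates for sparse operator} immediately yields the stated bound $[\omega]_{A_p^{\rho,\theta}}^{\max\{1,1/(p-1)\}}\|f\|_{L^p(\omega)}$. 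The presence of the weakening factor $\psi_N(Q)^{-1}$ is essential here, since the plain sparse operator $\sum_{Q}\langle|f|\rangle_{3Q}\chi_Q$ is not bounded on $L^p(\omega)$ for $\omega\in A_p^{\rho,\theta}\setminus A_p$.

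To produce this Schr\"odinger-adapted sparse bound, I will run the stopping-time construction of Proposition~\ref{principle} in the variant tailored to $\psi_N$-weighted sparse forms as developed in \cite{BuiBD1,BuiBD}. The two analytic inputs are the weak $(1,1)$ bound for $\mathcal{V}_\varrho(e^{-t\mathcal{L}})$, which is part of the hypothesis, and a pointwise estimate on the grand maximal truncation of the form
\begin{equation*}
\mathcal{M}_{\mathcal{V}_\varrho(e^{-t\mathcal{L}})}f(x) := \sup_{Q\ni x}\|\mathcal{V}_\varrho(e^{-t\mathcal{L}})(f\chi_{\mathbb{R}^n\setminus 3Q})\|_{L^\infty(Q)} \lesssim \sup_{Q\ni x}\psi_N(Q)^{-1}\langle|f|\rangle_{3Q},
\end{equation*}
which is where the $\psi_N(Q)^{-1}$ factor enters the sparse form. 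Using the standard control $\mathcal{V}_\varrho(e^{-t\mathcal{L}})g(x)\leq \int_0^\infty|\partial_t(e^{-t\mathcal{L}}g)(x)|\,dt$ (valid since $\varrho>2$), matters reduce to a kernel estimate for $\mathcal{L}e^{-t\mathcal{L}}$. Composing $\mathcal{L}e^{-t\mathcal{L}}=(\mathcal{L}e^{-t\mathcal{L}/2})\circ e^{-t\mathcal{L}/2}$, invoking the $L^2$-analyticity bound $\|\mathcal{L}e^{-s\mathcal{L}}\|_{2\to 2}\lesssim s^{-1}$, and propagating the Gaussian bound \eqref{upper bound} through the composition via Cauchy--Schwarz yields
\begin{equation*}
|\partial_t p_t(x,y)| \lesssim \tfrac{1}{t^{1+n/2}}\,e^{-|x-y|^2/ct}\,\Big(1+\tfrac{\sqrt{t}}{\rho(x)}\Big)^{-N_0}.
\end{equation*}
For $x\in Q$ and $y\notin 3Q$ one has $|x-y|\gtrsim r_Q$; integrating this kernel bound in $t$ (splitting at $t=r_Q^2$), summing over dyadic annular shells around $Q$, and transferring $\rho(x)$ to $\rho(x_Q)$ via \eqref{critical radius function} produces the claimed majorant $\psi_N(Q)^{-1} Mf(x)$, whence the weak $(1,1)$ bound for $\mathcal{M}_{\mathcal{V}_\varrho(e^{-t\mathcal{L}})}$ follows from that of $M$.

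The main obstacle is precisely this extraction of the critical-radius decay $(1+\sqrt t/\rho(x))^{-N_0}$ for the derivative kernel $\partial_t p_t$, given that only the Gaussian upper bound on $p_t$ itself, \eqref{upper bound}, is available and no spatial regularity of the kernel is assumed. The composition-plus-Cauchy--Schwarz argument handles this robustly but requires careful bookkeeping to preserve the pointwise Gaussian decay alongside the $\rho$-factor in the resulting bound. Once the grand maximal estimate is established, the passage from a local sparse domination (on cubes $Q_0$ containing $3\,\supp f$) to a global one proceeds by decomposing $f$ through the covering $\{B_j\}$ of Lemma~\ref{covering lemma}, applying the local sparse bound to each piece, and merging the resulting sparse families at the cost of the bounded-overlap constant from Lemma~\ref{covering lemma}(ii).
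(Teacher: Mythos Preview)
There is a genuine gap. Your claimed pointwise estimate
\[
\|\mathcal{V}_\varrho(e^{-t\mathcal{L}})(f\chi_{\mathbb{R}^n\setminus 3Q})\|_{L^\infty(Q)}\ \lesssim\ \psi_N(Q)^{-1}\,Mf(x)
\]
(and a fortiori $\mathcal{M}_{\mathcal{V}_\varrho}f\lesssim M^{\rho,N}f$) cannot be obtained from the crude bound $\mathcal{V}_\varrho(e^{-t\mathcal{L}})g\le \int_0^\infty|\partial_t e^{-t\mathcal{L}}g|\,dt$. The kernel estimate you derive gives $\int_0^\infty|\partial_t p_t(\xi,y)|\,dt\lesssim |\xi-y|^{-n}\min\bigl(1,(\rho(\xi)/|\xi-y|)^{N}\bigr)$, and integrating this against $|f|$ over $\{|\xi-y|>r_Q\}$ yields, when $r_Q\ll\rho(x_Q)$ (so $\psi_N(Q)\approx 1$), a sum $\sum_{2^k r_Q<\rho(x_Q)}\langle|f|\rangle_{2^kQ}$ plus a convergent tail. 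That first sum has $\sim\log(\rho(x_Q)/r_Q)$ terms, each comparable to $Mf(x)$, and hence diverges as $r_Q\to 0$. The borderline kernel $|\xi-y|^{-n}$ is exactly the non-integrable Calder\'on--Zygmund size, and the $\ell^1$-in-$t$ majorization discards all the cancellation that makes $\mathcal{V}_\varrho$ bounded in the first place. This is why the paper does \emph{not} seek a pointwise bound for $\mathcal{M}_{\mathcal{V}_\varrho}$: Proposition~\ref{weak type maximal operator} proves only its weak $(1,1)$ boundedness, via the splitting $I=P_{N,t}(\mathcal{L})+S_{N,t}(\mathcal{L})$ and an appeal to the Auscher/Blunck--Kunstmann criterion for the piece that cannot be dominated by $Mf$.

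A second, related misconception: the $\psi_N(Q)^{-1}$ factor in the sparse form does \emph{not} come from the grand maximal truncation. In the paper, one first decomposes via the critical cover $\{B_j\}$; the far part $f\chi_{(3Q_j)^c}$ is controlled directly by $M^{\rho,\sigma}f$ (here your $\int|\partial_t|$ argument \emph{does} work, because $r_{Q_j}\sim\rho(x_j)$ is fixed at the critical scale, so no $r\to 0$ occurs), while the near part $f\chi_{3Q_j}$ receives a \emph{standard} sparse domination on $Q_j$ via Proposition~\ref{principle}. The factor $\psi_N(P)^{-1}$ is then inserted \emph{for free} a posteriori, since every $P$ in the local sparse family satisfies $P\subset Q_j$, whence $r_P\lesssim\rho(x_P)$ and $\psi_N(P)\approx 1$. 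There is no ``$\psi_N$-weighted stopping time'' producing the decay intrinsically. Finally, your Cauchy--Schwarz composition argument for $|\partial_t p_t|$ recovers only the on-diagonal bound $t^{-1-n/2}$; the full Gaussian with the $\rho$-decay requires either Davies--Gaffney off-diagonal estimates or analytic extension of the semigroup plus Cauchy's integral formula (which the paper takes as known).
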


\begin{theorem}\label{general quantitative two weight}
Let $\varrho>2$, $\theta\geq0$, $1<p<\infty$ and $\rho$ be a critical radius function. Suppose that $\mathcal{V}_\varrho(e^{-t\mathcal{L}})$ is of weak type $(1,1)$ and bounded on $L^2(\mathbb{R}^n)$. If $\Phi,\Psi$ are Young functions such that $\bar{\Phi}\in B_{p'}$, $\bar{\Psi}\in B_p$ and a pair $(\mu,v)$ of weights satisfies
\begin{equation*}
[\mu,\nu]_{\Phi,\Psi,p,\rho,\theta}:=\sup_Q\|\nu^{1/p}\|_{\Phi,Q}\|\mu^{-1/p}\|_{\Psi,Q}
\psi_\theta(Q)^{-1}<\infty.
\end{equation*}
Then
\begin{equation*}
\|\mathcal{V}_\varrho(e^{-t\mathcal{L}})f\|_{L^p(\nu)}
\lesssim[\mu,\nu]_{\Phi,\Psi,p,\rho,\theta}[\bar{\Phi}]_{B_{p'}}^{1/p'}[\bar{\Psi}]_{B_p}^{1/p}
\|f\|_{L^p(\mu)}.
\end{equation*}
\end{theorem}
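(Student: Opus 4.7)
The plan is to obtain Theorem \ref{general quantitative two weight} by combining a sparse domination of $\mathcal{V}_\varrho(e^{-t\mathcal{L}})$ adapted to the critical radius $\rho$ with the standard duality/generalized Hölder argument, and then invoking the $L^p$ boundedness of Orlicz maximal operators (Lemma \ref{boundedness of Orlicz maximal operator}).

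\textbf{Step 1 (Sparse domination with a $\psi_N$-gain).} I would first prove that for every $f\in L^\infty_c(\mathbb{R}^n)$ there exist $N\geq\theta$ and a sparse family $\mathcal S$ (depending on $f$) such that
\[
\mathcal{V}_\varrho(e^{-t\mathcal{L}})f(x)\ \lesssim\ \mathcal{A}_{\mathcal S}^{\rho,N}f(x)\ =\ \sum_{Q\in\mathcal S}\langle|f|\rangle_{3Q}\,\psi_N(Q)^{-1}\chi_Q(x).
\]
The underlying mechanism is Lerner's principle (Proposition \ref{principle}) applied to $\mathcal{V}_\varrho(e^{-t\mathcal{L}})$, which is of weak type $(1,1)$ by hypothesis. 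To get the factor $\psi_N(Q)^{-1}$ that standard Lerner domination does not produce, I would split the variation into a local part $\sqrt{t}\le \rho(x)$ and a global part $\sqrt{t}>\rho(x)$. The local part is controlled by a classical sparse form on each ball of the covering provided by Lemma \ref{covering lemma}, while for the global part the decay $(1+\sqrt{t}/\rho(x)+\sqrt{t}/\rho(y))^{-N}$ in \eqref{upper bound} furnishes, after Minkowski and summation in $j$, a pointwise gain of $(1+r_Q/\rho(x_Q))^{-N}=\psi_N(Q)^{-1}$ on each dyadic scale. Recombining the two pieces and re-sparsifying via the principle recorded in Proposition \ref{principle} gives the displayed estimate, with $N$ arbitrarily large at the cost of constants.

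\textbf{Step 2 (Dualization and generalized Hölder).} By duality it suffices to bound
\[
\mathbf I \ :=\ \sum_{Q\in\mathcal S}\psi_N(Q)^{-1}\langle|f|\rangle_{3Q}\int_Q|g|\,\nu\,dx,\qquad \|g\|_{L^{p'}(\nu)}\le 1.
\]
I would apply generalized Hölder with the complementary Young pair $(\bar\Psi,\Psi)$ to $|f|\mu^{1/p}\cdot\mu^{-1/p}$ on $3Q$, and with $(\bar\Phi,\Phi)$ to $|g|\nu^{1/p'}\cdot\nu^{1/p}$ on $3Q$ (after absorbing $\int_Q\le\int_{3Q}$), producing the factors $\|\mu^{-1/p}\|_{\Psi,3Q}$ and $\|\nu^{1/p}\|_{\Phi,3Q}$. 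The bump hypothesis then collapses their product into $[\mu,\nu]_{\Phi,\Psi,p,\rho,\theta}\,\psi_\theta(3Q)$, and since $\psi_\theta(3Q)\lesssim\psi_\theta(Q)$ and the choice $N\ge\theta$ gives $\psi_N(Q)^{-1}\le\psi_\theta(Q)^{-1}$, the weights $\psi$ cancel, yielding
\[
\mathbf I\ \lesssim\ [\mu,\nu]_{\Phi,\Psi,p,\rho,\theta}\sum_{Q\in\mathcal S}|Q|\,\|f\mu^{1/p}\|_{\bar\Psi,3Q}\,\|g\nu^{1/p'}\|_{\bar\Phi,3Q}.
\]

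\textbf{Step 3 (Carleson step and Orlicz maximal bound).} Using sparseness of $\mathcal S$, write $|Q|\lesssim|E_Q|$ with the $\{E_Q\}$ pairwise disjoint, and for each $x\in E_Q\subset Q\subset 3Q$ estimate $\|f\mu^{1/p}\|_{\bar\Psi,3Q}\le M_{\bar\Psi}(f\mu^{1/p})(x)$ and $\|g\nu^{1/p'}\|_{\bar\Phi,3Q}\le M_{\bar\Phi}(g\nu^{1/p'})(x)$. Integrating over the disjoint $E_Q$'s gives
\[
\mathbf I\ \lesssim\ [\mu,\nu]_{\Phi,\Psi,p,\rho,\theta}\int_{\mathbb{R}^n}M_{\bar\Psi}(f\mu^{1/p})(x)\,M_{\bar\Phi}(g\nu^{1/p'})(x)\,dx.
\]
Hölder's inequality on $L^p\times L^{p'}$ and Lemma \ref{boundedness of Orlicz maximal operator} (since $\bar\Psi\in B_p$ and $\bar\Phi\in B_{p'}$) produce the quantitative constants $[\bar\Psi]_{B_p}^{1/p}[\bar\Phi]_{B_{p'}}^{1/p'}$, and the residual norms are exactly $\|f\|_{L^p(\mu)}$ and $\|g\|_{L^{p'}(\nu)}\le 1$.

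\textbf{Main obstacle.} The delicate point is Step 1: the heat kernel $p_t(x,y)$ satisfies only the Gaussian upper bound \eqref{upper bound} with no regularity in the spatial variables, so the classical Lerner scheme for Calder\'on--Zygmund operators does not apply verbatim. The challenge is to implement the principle of Proposition \ref{principle} so that the $(1+\sqrt{t}/\rho(x))^{-N}$ decay of the kernel is transferred to a $\psi_N(Q)^{-1}$ factor in the sparse form for \emph{every} cube $Q$, not merely for those at the critical scale. Once this is in hand, the rest of the argument is a routine bump/Orlicz estimate; the gain $\psi_N(Q)^{-1}$ with $N\ge\theta$ is what makes the weaker bump condition $[\mu,\nu]_{\Phi,\Psi,p,\rho,\theta}<\infty$ enough, in contrast to the classical $\sup_Q\|\nu^{1/p}\|_{\Phi,Q}\|\mu^{-1/p}\|_{\Psi,Q}<\infty$.
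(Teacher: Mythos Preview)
The gap is in Step 1. Proposition \ref{principle} (together with Proposition \ref{weak type maximal operator}) only yields, for each $Q_j$ in the covering of Lemma \ref{covering lemma}, a sparse family $\mathcal S_j\subset\mathcal D(Q_j)$ with \emph{no} $\psi$-factor; the insertion of $\psi_N(P)^{-1}$ is free precisely because every $P\in\mathcal S_j$ has $r_P\le r_{Q_j}\sim\rho(x_P)$, so $\psi_N(P)\sim 1$ (this is \eqref{local estimate}). Your Steps 2--3 then treat this near piece correctly and match the paper's handling of the term it calls $K$. What you are missing is the \emph{far} piece $f\chi_{(3Q_j)^c}$, which the paper does not sparsify at all: instead, using only the Gaussian decay in \eqref{upper bound}, it proves the pointwise bound $\mathcal V_\varrho(e^{-t\mathcal L})(f\chi_{(3Q_j)^c})(x)\lesssim M^{\rho,\sigma}f(x)$ for $x\in B_j$ (this is \eqref{global estimate}), sums in $j$ by bounded overlap, and then invokes the two-weight inequality for $M^{\rho,\sigma}$ recorded in Lemma \ref{quantitative weighted bounds for Hardy-Littlewood maximal operators}(2) to obtain the $[\mu,\nu]_{\Phi,\Psi,p,\rho,\theta}[\bar\Psi]_{B_p}^{1/p}$ bound for that term.

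Your claim that the local and global parts can be ``recombined and re-sparsified'' via Proposition \ref{principle} into a single $\mathcal A_{\mathcal S}^{\rho,N}$ with a genuine $\psi_N(Q)^{-1}$ gain on \emph{all} cubes is not justified: that proposition produces no $\psi$-factor, and neither the split in $\sqrt t\lessgtr\rho(x)$ you describe nor the far-support piece leads to a sparse form with $\psi_N(Q)^{-1}$ small for supercritical $Q$. In short, your duality/H\"older argument covers only half of the proof; for the complementary piece you still need the reduction to $M^{\rho,\sigma}$ and its bump-type two-weight bound, which is exactly how the paper closes the argument.
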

\begin{remark}
Since the kernel of $e^{-t\mathcal{L}}$ only satisfies an upper bound, the tools such as the spatial regularity of the heat kernel and estimates on the difference between the kernels of $e^{-t\mathcal{L}}$ and $e^{t\triangle}$ in \cite{BetFHR,Bui,TZ,WW} are invalid in this new setting.
\end{remark}

The proofs of these two theorems above rely upon the sparse dominations. We need to prove that $\mathcal{M}_{\mathcal{V}_\varrho(e^{-t\mathcal{L}})}$ is of weak type $(1,1)$ in virtue of Proposition \ref{principle}. To do this, we first establish two lemmas.
\begin{lemma}\label{P}
Let $\varrho>2$ and
\begin{equation}\label{PN}
P_{N,t}(\mathcal{L}):=\frac{1}{(N-1)!}\int_t^\infty(s\mathcal{L})^Ne^{-s\mathcal{L}}\frac{ds}{s},~t>0.
\end{equation}
Then for any cube $Q$ and $x,\xi\in Q$,
\begin{align*}
\int_{\mathbb{R}^n}|P_{N,r_Q^2}(\xi,y)|\mathcal{V}_\varrho(e^{-t\mathcal{L}})f(y)dy\lesssim M(\mathcal{V}_\varrho(e^{-t\mathcal{L}})f)(x),
\end{align*}
where $P_{N,t}(x,y)$ is the kernel of $P_{N,t}(\mathcal{L})$.
\end{lemma}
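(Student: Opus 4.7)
The plan is in two movements: first establish a Gaussian pointwise bound $|P_{N,t}(x,y)|\lesssim t^{-n/2}e^{-|x-y|^{2}/(ct)}$, then apply a dyadic annular decomposition around $Q$ to reduce the left-hand side to the Hardy--Littlewood maximal function of $g:=\mathcal{V}_\varrho(e^{-t\mathcal{L}})f$ evaluated at $x$.

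For the kernel bound, the substitution $u=s\lambda$ in the scalar version of \eqref{PN} gives the closed form
$$P_{N,t}(\lambda)=\frac{\Gamma(N,t\lambda)}{(N-1)!}=e^{-t\lambda}\sum_{k=0}^{N-1}\frac{(t\lambda)^{k}}{k!},$$
so by the functional calculus for the non-negative self-adjoint operator $\mathcal{L}$,
$$P_{N,t}(\mathcal{L})=\sum_{k=0}^{N-1}\frac{1}{k!}(t\mathcal{L})^{k}e^{-t\mathcal{L}}.$$
Factoring $(t\mathcal{L})^{k}e^{-t\mathcal{L}}=2^{k}\bigl((t\mathcal{L}/2)^{k}e^{-t\mathcal{L}/2}\bigr)\circ e^{-t\mathcal{L}/2}$, the spectral bound $\|(t\mathcal{L}/2)^{k}e^{-t\mathcal{L}/2}\|_{L^{2}\to L^{2}}\lesssim 1$ combined with a standard composition-of-Gaussian-bounds (Davies--Gaffney) argument applied to \eqref{upper bound} delivers the desired Gaussian kernel bound, uniformly in $k=0,\dots,N-1$.

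For the annular step, fix $x,\xi\in Q$, set $t=r_{Q}^{2}$, and write $\mathbb{R}^{n}=\bigsqcup_{k\ge 0}A_{k}$ with $A_{0}:=2Q$ and $A_{k}:=2^{k+1}Q\setminus 2^{k}Q$ for $k\ge 1$. For $y\in A_{k}$ with $k\ge 1$ one has $|\xi-y|\gtrsim 2^{k}r_{Q}$, hence $e^{-|\xi-y|^{2}/(cr_{Q}^{2})}\lesssim e^{-c'4^{k}}$; since $x\in Q\subset 2^{k+1}Q$, the average of $g$ over $2^{k+1}Q$ is controlled by $M(g)(x)$. Combining these,
$$\int_{A_{k}}|P_{N,r_{Q}^{2}}(\xi,y)|\,g(y)\,dy\lesssim 2^{(k+1)n}e^{-c'4^{k}}M(g)(x),$$
(with the $e^{-c'4^{k}}$ replaced by $1$ when $k=0$), and super-exponential convergence of $\sum_{k\ge 0}2^{kn}e^{-c'4^{k}}$ closes the estimate.

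The main obstacle is the first step: transferring the pointwise bound \eqref{upper bound} for $p_{s}$ into a Gaussian bound for $(s\mathcal{L})^{N}e^{-s\mathcal{L}}$, since the kernel of $p_{s}$ cannot be differentiated in $s$ pointwise from the hypotheses. The semigroup-composition route above is the standard remedy; an equivalent alternative is to estimate $P_{N,t}(\xi,y)$ directly from \eqref{PN} by splitting the $s$-integral according to whether $s\lessgtr|\xi-y|^{2}$, using the crude bound $|(s\mathcal{L})^{N}e^{-s\mathcal{L}}(\xi,y)|\lesssim s^{-n/2}e^{-|\xi-y|^{2}/(cs)}$ (itself obtained by the same composition argument) and integrating in $s$, which costs only a harmless absolute constant after slightly enlarging $c$. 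The extra factor $(1+\sqrt{s}/\rho(\cdot))^{-N}$ in \eqref{upper bound} plays no role here and is simply discarded.
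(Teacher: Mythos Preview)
Your proposal is correct and follows essentially the same route as the paper: the paper also identifies $P_{N,t}(\mathcal{L})=p(t\mathcal{L})e^{-t\mathcal{L}}$ for a polynomial $p$ of degree $N-1$ with $p(0)=1$ (your incomplete-gamma computation makes this explicit), asserts that its kernel inherits the Gaussian upper bound \eqref{upper bound}, and then runs the identical dyadic annular decomposition around $Q$ to land on $M(\mathcal{V}_\varrho(e^{-t\mathcal{L}})f)(x)$. The only difference is that the paper takes the Gaussian bound for $(t\mathcal{L})^{k}e^{-t\mathcal{L}}$ as known without further comment, whereas you sketch the composition/Davies--Gaffney justification; your remark that the $\rho$-factor in \eqref{upper bound} is irrelevant here is also in line with the paper's usage.
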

\begin{proof}
From \eqref{PN}, one can check that $P_{N,t}(\mathcal{L})=p(t\mathcal{L})e^{-t\mathcal{L}}$, where $p$ is a polynomial of degree $N-1$ with $p(0)=1$. Then $P_{N,t}(x,y)$ also satisfies \eqref{upper bound}.

Fix a cube $Q\ni x$, using \eqref{upper bound}, we have
\begin{align*}
&\int_{\mathbb{R}^n}|P_{N,r_Q^2}(\xi,y)|\mathcal{V}_\rho(e^{-t\mathcal{L}})f(y)dy\\
&\quad\lesssim\sum_{j=2}^\infty\int_{2^jQ\backslash2^{j-1}Q}r_Q^{-n}e^{-\frac{|\xi-y|^2}{cr_Q^2}}
\mathcal{V}_\varrho(e^{-t\mathcal{L}})f(y)dy+\sum_{k=1}^2\frac{1}{|kQ|}\int_{kQ}
\mathcal{V}_\varrho(e^{-t\mathcal{L}})f(y)dy\\
&\quad\lesssim\sum_{j=2}^\infty\frac{2^{jn}}{|2^jQ|}e^{-c2^{2(j-3)}}
\int_{2^jQ}\mathcal{V}_\varrho(e^{-t\mathcal{L}})f(y)dy
+M(\mathcal{V}_\varrho(e^{-t\mathcal{L}})f)(x)\\
&\quad\lesssim M(\mathcal{V}_\varrho(e^{-t\mathcal{L}})f)(x).
\end{align*}
This proves the conclusion.
\end{proof}

\begin{lemma}\label{estimate for average of PN}
Let $P_{N,t}(\mathcal{L})$ be defined as \eqref{PN}. Then for any function $f$ supported in a ball $B$,
\begin{equation*}
\Big(\frac{1}{|2^jB|}\int_{2^jB\backslash2^{j-1}B}|P_{N,r_B^2}(\mathcal{L})f(x)|^2dx\Big)^{1/2}
\lesssim\alpha(j)\Big(\frac{1}{|B|}\int_B|f(x)|dx\Big),~j\geq3
\end{equation*}
and $\sum_{j=3}^\infty\alpha(j)2^{nj}<\infty$, where $r_B$ is the radius of ball $B$.
\end{lemma}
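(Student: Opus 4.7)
The plan is to reduce the statement to a pointwise Gaussian bound on $P_{N,r_B^2}(\mathcal{L})f(x)$ for $x$ in the annulus $2^jB\setminus 2^{j-1}B$, then simply take the $L^2$ average. The first step is to observe, as is already mentioned in the proof of Lemma \ref{P}, that $P_{N,t}(\mathcal{L})=p(t\mathcal{L})e^{-t\mathcal{L}}$ for a polynomial $p$ of degree $N-1$ with $p(0)=1$, so the kernel $P_{N,t}(x,y)$ inherits the Gaussian upper bound \eqref{upper bound}. In particular, with $t=r_B^2$,
\begin{equation*}
|P_{N,r_B^2}(x,y)|\lesssim \frac{1}{r_B^n}\,e^{-|x-y|^2/(c r_B^2)},
\end{equation*}
after discarding the harmless factor involving $\rho$, which is bounded by $1$.

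Next, I would use the geometric separation in the annulus: for $x\in 2^jB\setminus 2^{j-1}B$ and $y\in B$ with $j\geq 3$, the triangle inequality gives $|x-y|\geq (2^{j-1}-1)r_B\geq 2^{j-2}r_B$, so $e^{-|x-y|^2/(cr_B^2)}\leq e^{-c'2^{2j}}$. Integrating against $|f|\chi_B$ then yields the pointwise bound
\begin{equation*}
|P_{N,r_B^2}(\mathcal{L})f(x)|\lesssim \frac{e^{-c'2^{2j}}}{r_B^n}\int_B|f(y)|\,dy\lesssim e^{-c'2^{2j}}\,\frac{1}{|B|}\int_B|f(y)|\,dy,
\end{equation*}
uniformly in $x\in 2^jB\setminus 2^{j-1}B$.

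Taking the $L^2$ average over $2^jB$ (and using that the supremum already has the right scaling, since $|2^jB\setminus 2^{j-1}B|\leq|2^jB|$) gives
\begin{equation*}
\Big(\frac{1}{|2^jB|}\int_{2^jB\setminus 2^{j-1}B}|P_{N,r_B^2}(\mathcal{L})f(x)|^2\,dx\Big)^{1/2}\lesssim e^{-c'2^{2j}}\Big(\frac{1}{|B|}\int_B|f(x)|\,dx\Big).
\end{equation*}
This identifies $\alpha(j)=e^{-c'2^{2j}}$, and the summability $\sum_{j\geq 3}\alpha(j)2^{nj}<\infty$ is immediate from the super-exponential Gaussian decay against the polynomial growth $2^{nj}$.

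There is no real obstacle here: the entire argument is driven by the Gaussian upper bound \eqref{upper bound} transferred to $P_{N,r_B^2}$. The only point one has to be mildly careful about is the estimate $|x-y|\geq 2^{j-2}r_B$ in the annulus, which needs $j\geq 3$ as in the statement; outside of that everything is routine. Notably, no regularity in the spatial variables of the kernel is needed, which is exactly the situation the remark after Theorem \ref{general quantitative two weight} is stressing.
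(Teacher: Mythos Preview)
Your proof is correct and essentially identical to the paper's own argument: both use that $P_{N,t}(\mathcal{L})=p(t\mathcal{L})e^{-t\mathcal{L}}$ inherits the Gaussian upper bound \eqref{upper bound}, then exploit the separation $|x-y|\gtrsim 2^{j}r_B$ in the annulus to obtain the pointwise bound and set $\alpha(j)=e^{-c\,2^{2j}}$. If anything, your write-up makes the distance estimate $|x-y|\geq(2^{j-1}-1)r_B\geq 2^{j-2}r_B$ slightly more explicit than the paper does.
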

\begin{proof}
By \eqref{upper bound}, we have
\begin{align*}
&\Big(\frac{1}{|2^jB|}\int_{2^jB\backslash2^{j-1}B}|P_{N,r_B^2}(\mathcal{L})f(x)|^2dx\Big)^{1/2}\\
&\quad\lesssim\Big(\frac{1}{|2^jB|}\int_{2^jB\backslash2^{j-1}B}
\Big|\int_Br_{B}^{-n}e^{-\frac{|x-y|^2}{cr_B^2}}|f(y)|dy\Big|^2dx\Big)^{1/2}\\
&\quad\lesssim\Big(\frac{1}{|2^jB|}\int_{2^jB}
\Big(\frac{1}{|B|}\int_Be^{-c2^{2(j-2)}}|f(y)|dy\Big)^2dx\Big)^{1/2}\\
&\quad=e^{-c2^{2(j-2)}}\Big(\frac{1}{|B|}\int_B|f(x)|dx\Big)=:\alpha(j)\Big(\frac{1}{|B|}\int_B|f(x)|dx\Big).
\end{align*}
From this, it is direct that $\sum_{j=3}^\infty\alpha(j)2^{nj}<\infty$. This completes the proof.
\end{proof}

In the following, we prove that $\mathcal{M}_{\mathcal{V}_\varrho(e^{-t\mathcal{L}})}$ is of weak type $(1,1)$, that is, we have the following proposition.
\begin{proposition}\label{weak type maximal operator}
Let $\varrho>2$. Suppose that $\mathcal{V}_\varrho(e^{-t\mathcal{L}})$ is bounded on $L^2(\mathbb{R}^n)$. Then $\mathcal{M}_{\mathcal{V}_\varrho(e^{-t\mathcal{L}})}$ is of weak type $(1,1)$.
\end{proposition}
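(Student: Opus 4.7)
The plan is to establish the weak type $(1,1)$ bound of $\mathcal{M}_{\mathcal{V}_\varrho(e^{-t\mathcal{L}})}$ through a pointwise domination
\[
\mathcal{M}_{\mathcal{V}_\varrho(e^{-t\mathcal{L}})}f(x)\;\lesssim\;M\bigl(\mathcal{V}_\varrho(e^{-t\mathcal{L}})f\bigr)(x)+M_2f(x),\qquad \text{a.e. } x,
\]
combined with the standard Calder\'on--Zygmund decomposition. Once the pointwise bound is in hand, the $L^2$-boundedness of $\mathcal{V}_\varrho(e^{-t\mathcal{L}})$, $M$, and $M_2$ gives the $L^2$ boundedness of $\mathcal{M}_{\mathcal{V}_\varrho(e^{-t\mathcal{L}})}$, which handles the good part at level $\lambda$; the exceptional set $\bigcup 3Q_j$ produced by the CZ cubes has measure $\lesssim \|f\|_{L^1}/\lambda$, and the tail estimate on $\mathbb{R}^n\setminus\bigcup 3Q_j$ for the bad part is handled by applying Lemma \ref{estimate for average of PN} at the scale of each $Q_j$ together with the cancellation $\int b_j=0$.

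To derive the pointwise control, I would fix $x\in Q$ and $\xi\in Q$ and decompose via the resolution $I=P_{N,r_Q^2}(\mathcal{L})+(I-P_{N,r_Q^2}(\mathcal{L}))$ applied to $f\chi_{\mathbb{R}^n\setminus 3Q}$, producing $\mathcal{V}_\varrho(e^{-t\mathcal{L}})(f\chi_{\mathbb{R}^n\setminus 3Q})(\xi)\le I_1(\xi)+I_2(\xi)$ in the natural way. Since $e^{-t\mathcal{L}}$ and $P_{N,r_Q^2}(\mathcal{L})$ commute (both are functional calculus of $\mathcal{L}$) and $\varrho>1$, Minkowski's inequality pulls $P_{N,r_Q^2}(\xi,\cdot)$ outside the $\ell^\varrho$-sum in $I_1$:
\[
I_1(\xi)\le\int|P_{N,r_Q^2}(\xi,y)|\,\mathcal{V}_\varrho(e^{-t\mathcal{L}})(f\chi_{\mathbb{R}^n\setminus 3Q})(y)\,dy.
\]
Splitting $\mathcal{V}_\varrho(e^{-t\mathcal{L}})(f\chi_{\mathbb{R}^n\setminus 3Q})\le\mathcal{V}_\varrho(e^{-t\mathcal{L}})f+\mathcal{V}_\varrho(e^{-t\mathcal{L}})(f\chi_{3Q})$, Lemma \ref{P} handles the first term as $M(\mathcal{V}_\varrho(e^{-t\mathcal{L}})f)(x)$, while the Gaussian decay of $P_{N,r_Q^2}(\xi,\cdot)$, a dyadic annular decomposition about $Q$, Cauchy--Schwarz on each annulus, and the $L^2$-boundedness of $\mathcal{V}_\varrho(e^{-t\mathcal{L}})$ applied to the localized piece $f\chi_{3Q}$ handle the second, yielding $M_2f(x)$ after summing the rapidly decaying series.

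For $I_2$ the key tool is Lemma \ref{estimate for average of PN}. I would decompose $\mathbb{R}^n\setminus 3Q=\bigsqcup_{j\ge 2}(2^{j+1}Q\setminus 2^jQ)$, combine the off-diagonal $L^2$ bound of that lemma for $P_{N,r_Q^2}(\mathcal{L})$ acting on $f$ restricted to each annulus with the $L^2$-boundedness of $\mathcal{V}_\varrho(e^{-t\mathcal{L}})$; the factor $2^{jn}$ arising from Cauchy--Schwarz on each annulus is absorbed by $\alpha(j)$, whose summability against $2^{jn}$ is precisely the content of that lemma. Summation in $j$ closes the bound by another $M_2f(x)$ contribution.

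The main obstacle is exactly the $I_2$ analysis: since $p_t(x,y)$ obeys only the upper Gaussian estimate \eqref{upper bound} without any spatial regularity (as emphasized in the remark following Theorem \ref{general quantitative two weight}), no classical kernel-regularity argument is available, and the entire estimate must be routed through the functional-calculus decomposition $I=P_{N,r_Q^2}(\mathcal{L})+(I-P_{N,r_Q^2}(\mathcal{L}))$ together with the off-diagonal $L^2$ information of Lemma \ref{estimate for average of PN} and, in the bad-part step of the CZ argument, the vanishing mean of $b_j$. Passing from the $L^\infty(Q)$ norm in the definition of $\mathcal{M}_T$ to the averaged $L^2$ quantities available from $L^2$-boundedness is the delicate point that dictates the use of the $L^2$-off-diagonal lemma at the scale $r_Q^2$.
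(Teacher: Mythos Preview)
Your overall architecture---split via $I=P_{N,r_Q^2}(\mathcal{L})+S_{N,r_Q^2}(\mathcal{L})$, control the $P$-piece by $M(\mathcal{V}_\varrho f)$ through Lemma~\ref{P}, and then upgrade $L^2$ boundedness to weak $(1,1)$ by a Calder\'on--Zygmund style argument---matches the paper's. But two steps in your sketch do not go through as written.

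First, your treatment of $I_2=\mathcal{V}_\varrho(e^{-t\mathcal{L}})S_{N,r_Q^2}(\mathcal{L})(f\chi_{\mathbb{R}^n\setminus 3Q})$ invokes Lemma~\ref{estimate for average of PN}, which is an off-diagonal bound for $P_{N,r_B^2}$ with $f$ supported \emph{in} $B$; here the operator is $S_N=I-P_N$, the support is on annuli of $Q$, and the scale $r_Q^2$ does not match the annulus scale. If you try to split $S_N=I-P_N$ and use sublinearity, the identity piece gives back $\mathcal{V}_\varrho(e^{-t\mathcal{L}})(f\chi_{\mathbb{R}^n\setminus 3Q})$ itself, so the argument is circular; and the crude $\ell^1$ bound $\mathcal{V}_\varrho(e^{-t\mathcal{L}})(f\chi_{A_j})(\xi)\lesssim\langle|f|\rangle_{2^{j+1}Q}$ does not sum in $j$. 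The paper avoids this entirely: it writes $S_{N,r_Q^2}(\mathcal{L})=\tfrac{1}{(N-1)!}\int_0^{r_Q^2}(s\mathcal{L})^Ne^{-s\mathcal{L}}\,\tfrac{ds}{s}$, expands $\mathcal{V}_\varrho(e^{-t\mathcal{L}})S_{N,r_Q^2}$ through $\int_0^\infty\int_0^{r_Q^2}ts^N|\mathcal{L}^{N+1}e^{-(s+t)\mathcal{L}}(\cdot)|\,\tfrac{ds}{s}\tfrac{dt}{t}$, and bounds the kernel directly from~\eqref{upper bound} to obtain the pointwise estimate $\mathcal{M}^3f\lesssim Mf$ (in fact $Mf$, not $M_2f$). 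The same direct functional-calculus argument also gives $\mathcal{M}^2f\lesssim Mf$.

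Second, and more seriously, your Calder\'on--Zygmund step for the bad part appeals to ``the cancellation $\int b_j=0$''. That cancellation is only usable if the kernel has spatial regularity in the integration variable, which you yourself note is absent here. The paper does \emph{not} use $\int b_j=0$. After disposing of $\mathcal{M}^2,\mathcal{M}^3\lesssim Mf$, it applies the Auscher/Blunck--Kunstmann weak-$(1,1)$ criterion to $\mathcal{M}^1$: given $L^2$ boundedness of $\mathcal{M}^1$ and the off-diagonal bound of Lemma~\ref{estimate for average of PN} for the approximation $P_{l,r_B^2}=I-S_{l,r_B^2}$, one only needs to verify the $L^1\to L^2$ off-diagonal estimate
\[
\Big(\frac{1}{|2^jB|}\int_{2^jB\setminus 2^{j-1}B}\big|\mathcal{M}^1\big(S_{l,r_B^2}(\mathcal{L})f\big)\big|^2\Big)^{1/2}\lesssim 2^{-j\alpha}\langle|f|\rangle_B,\qquad \alpha>n,
\]
for $f$ supported in $B$; the paper checks this by writing out $t\mathcal{L}e^{-t\mathcal{L}}P_{N,r_Q^2}S_{l,r_B^2}$ as a triple integral in $(s,u,t)$ and estimating the kernel of $\mathcal{L}^{l+N+1}e^{-(s+u+t)\mathcal{L}}$. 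In short, the role of ``cancellation'' is played by the operator $S_{l,r_B^2}$, not by the mean-zero property of $b_j$.
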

\begin{proof}
Fixed $N>0$, applying functional calculus, we get
\begin{equation*}
I=\frac{1}{(N-1)!}\int_0^\infty(s\mathcal{L})^Ne^{-s\mathcal{L}}\frac{ds}{s}.
\end{equation*}
For each $t>0$, we define $S_{N,t}(\mathcal{L})=I-P_{N,t}(\mathcal{L})$, where $P_{N,t}(\mathcal{L})$ is defined as \eqref{PN}.

Let $Q$ be a cube and $x\in Q$. It is obvious that
\begin{align}\label{three terms}
&\|\mathcal{V}_\varrho(e^{-t\mathcal{L}})(f\chi_{\mathbb{R}^n\backslash3Q})\|_{L^\infty(Q)}\\
&\quad\leq\|\mathcal{V}_\varrho(e^{-t\mathcal{L}})P_{N,r_Q^2}(\mathcal{L})
(f\chi_{\mathbb{R}^n\backslash3Q})\|_{L^\infty(Q)}+\|\mathcal{V}_\varrho(e^{-t\mathcal{L}})
S_{N,r_Q^2}(\mathcal{L})(f\chi_{\mathbb{R}^n\backslash3Q})\|_{L^\infty(Q)}\nonumber\\
&\quad\leq\|\mathcal{V}_\varrho(e^{-t\mathcal{L}})P_{N,r_Q^2}(\mathcal{L})
f\|_{L^\infty(Q)}+\|\mathcal{V}_\varrho(e^{-t\mathcal{L}})P_{N,r_Q^2}(\mathcal{L})
(f\chi_{3Q})\|_{L^\infty(Q)}\nonumber\\
&\qquad+\|\mathcal{V}_\varrho(e^{-t\mathcal{L}})S_{N,r_Q^2}(\mathcal{L})
(f\chi_{\mathbb{R}^n\backslash3Q})\|_{L^\infty(Q)}.\nonumber
\end{align}
For brevity, we denote
\begin{equation*}
\mathcal{M}_{\mathcal{V}_\varrho(e^{-t\mathcal{L}})}^1f(x):=\sup_{Q\ni x}
\|\mathcal{V}_\varrho(e^{-t\mathcal{L}})P_{N,r_Q^2}(\mathcal{L})f\|_{L^\infty(Q)},
\end{equation*}
\begin{equation*}
\mathcal{M}_{\mathcal{V}_\varrho(e^{-t\mathcal{L}})}^2f(x):=\sup_{Q\ni x}
\|\mathcal{V}_\varrho(e^{-t\mathcal{L}})P_{N,r_Q^2}(\mathcal{L})(f\chi_{3Q})\|_{L^\infty(Q)},
\end{equation*}
\begin{equation*}
\mathcal{M}_{\mathcal{V}_\varrho(e^{-t\mathcal{L}})}^3f(x):=\sup_{Q\ni x}
\|\mathcal{V}_\varrho(e^{-t\mathcal{L}})S_{N,r_Q^2}(\mathcal{L})
(f\chi_{\mathbb{R}^n\backslash3Q})\|_{L^\infty(Q)}.
\end{equation*}
Then, by \eqref{three terms}, we have
\begin{equation*}
\mathcal{M}_{\mathcal{V}_\varrho(e^{-t\mathcal{L}})}f(x)\leq
\mathcal{M}_{\mathcal{V}_\varrho(e^{-t\mathcal{L}})}^1f(x)+\mathcal{M}_{\mathcal{V}_\varrho(e^{-t\mathcal{L}})}^2f(x)
+\mathcal{M}_{\mathcal{V}_\varrho(e^{-t\mathcal{L}})}^3f(x).
\end{equation*}
Thus, it suffices to prove that $\mathcal{M}_{\mathcal{V}_\varrho(e^{-t\mathcal{L}})}^1f$,
$\mathcal{M}_{\mathcal{V}_\varrho(e^{-t\mathcal{L}})}^2f$ and $\mathcal{M}_{\mathcal{V}_\varrho(e^{-t\mathcal{L}})}^3f$ are of weak type $(1,1)$.

For $\mathcal{M}_{\mathcal{V}_\varrho(e^{-t\mathcal{L}})}^1f$, we first claim that for any $\xi\in Q$,
\begin{equation}\label{claim}
\mathcal{V}_\varrho(e^{-t\mathcal{L}})P_{N,r_Q^2}(\mathcal{L})f(\xi)\leq \int_{\mathbb{R}^n}|P_{N,r_Q^2}(\xi,y)|\mathcal{V}_\rho(e^{-t\mathcal{L}})f(y)dy.
\end{equation}
Indeed, note that
\begin{equation*}
e^{-t\mathcal{L}}P_{N,r_Q^2}(\mathcal{L})=P_{N,r_Q^2}(\mathcal{L})e^{-t\mathcal{L}},~t>0.
\end{equation*}
Then by Minkowski's inequality, we deduce that
\begin{align*}
&\mathcal{V}_\varrho(e^{-t\mathcal{L}})P_{N,r_Q^2}(\mathcal{L})f(\xi)\\
&\quad=\sup_{\{t_i\}\downarrow0}\Big(\sum_{i=1}^\infty|e^{-t_i\mathcal{L}}
P_{N,r_Q^2}(\mathcal{L})(f)(\xi)-e^{-t_{i+1}\mathcal{L}}P_{N,r_Q^2}(\mathcal{L})(f)(\xi)|
^\varrho\Big)^{1/\varrho}\\
&\quad=\sup_{\{t_i\}\downarrow0}\Big(\sum_{i=1}^\infty|P_{N,r_Q^2}(\mathcal{L})e^{-t_i\mathcal{L}}
(f)(\xi)-P_{N,r_Q^2}(\mathcal{L})e^{-t_{i+1}\mathcal{L}}(f)(\xi)|^\varrho\Big)^{1/\varrho}\\
&\quad=\sup_{\{t_i\}\downarrow0}\Big(\sum_{i=1}^\infty\Big|\int_{\mathbb{R}^n}P_{N,r_Q^2}(\xi,y)
\int_{\mathbb{R}^n}(p_{t_i}(z,y)-p_{t_{i+1}}(z,y))f(z)dzdy\Big|^\varrho\Big)^{1/\varrho}\\
&\quad\leq\int_{\mathbb{R}^n}|P_{N,r_Q^2}(\xi,y)|\mathcal{V}_\varrho(e^{-t\mathcal{L}})f(y)dy.
\end{align*}
This proves the claim.

By Lemma \ref{P}, we obtain
\begin{equation*}
\mathcal{M}_{\mathcal{V}_\varrho(e^{-t\mathcal{L}})}^1f\leq M(\mathcal{V}_\varrho(e^{-t\mathcal{L}})f),
\end{equation*}
This, together with the $L^2$-boundedness of $M$ and $\mathcal{V}_\varrho(e^{-t\mathcal{L}})$, implies that $\mathcal{M}_{\mathcal{V}_\varrho(e^{-t\mathcal{L}})}^1$ is bounded on $L^2(\mathbb{R}^n)$. So, in virtue of the criterion in \cite{Au,BlKu}, Lemma \ref{estimate for average of PN} and $I-S_{l,r_B^2}(\mathcal{L})=P_{l,r_B^2}(\mathcal{L})$, the proof of weak type $(1,1)$ of $\mathcal{M}_{\mathcal{V}_\varrho(e^{-t\mathcal{L}})}^1$ is reduced to show that
for every function $f$ supported in a ball $B$ and $j\geq3$,
\begin{equation}\label{criterion}
\Big(\frac{1}{|2^jB|}\int_{2^jB\backslash2^{j-1}B}
\mathcal{M}_{\mathcal{V}_\varrho(e^{-t\mathcal{L}})}^1(S_{l,r_B^2}(\mathcal{L})f)(x)^2dx\Big)
^{1/2}\lesssim2^{-j\alpha}\langle|f|\rangle_B,~\alpha>n,
\end{equation}
where $l$ is a fixed natural number such that $l>\alpha/2+1$.

Bearing in mind that $\supp f\subset B$, for $y\in Q$, we have that
\begin{align*}
|\mathcal{L}^{l+N+1}e^{-(s+u+t)\mathcal{L}}f(y)|&=|(s+u+t)^{-l-N-1}((s+u+t)\mathcal{L})^{l+N+1}
e^{-(s+u+t)\mathcal{L}}f(y)|\\
&\lesssim\int_B\frac{(s+u+t)^{-l-N-1}}{(s+u+t)^{n/2}}e^{-\frac{|y-z|^2}{c(s+u+t)}}|f(z)|dz,
\end{align*}
where we use $(t\mathcal{L})^ke^{-t\mathcal{L}}$ also satisfies \eqref{upper bound} in the last inequality. Note that for $z\in B$, $x\in(2^jB\backslash2^{j-1}B)\cap Q$, $y\in Q$ and $u\geq r_Q^2$,
\begin{equation*}
e^{-\frac{|y-z|^2}{c(s+u+t)}}\lesssim e^{-\frac{2^{2j}r_B^2}{c(s+u+t)}}\lesssim
\frac{(s+u+t)^{\frac{n}{2}+l-1}}{(2^{2j}r_B^2)^{\frac{n}{2}+l-1}}.
\end{equation*}
Therefore,
\begin{align}\label{used for the proof of claim}
&|\mathcal{L}^{l+N+1}e^{-(s+u+t)\mathcal{L}}f(y)|\\
&\quad\lesssim\int_B(s+u+t)^{-l-N-1-\frac{n}{2}}
\frac{(s+u+t)^{\frac{n}{2}+l-1}}{(2^{2j}r_B^2)^{\frac{n}{2}+l-1}}|f(z)|dz\nonumber\\
&\quad\leq(s+u+t)^{-N-2}(2^{2j}r_B^2)^{1-l}\langle|f|\rangle_{B}.\nonumber
\end{align}
Return to the proof of \eqref{criterion}. For $x\in 2^jB\backslash2^{j-1}B$, by \eqref{used for the proof of claim}, it follows that
\begin{align*}
&\mathcal{M}_{\mathcal{V}_\varrho(e^{-t\mathcal{L}})}^1(S_{l,r_B^2}(\mathcal{L})f)(x)\\
&\quad=\sup_{Q\ni x}\esss_{y\in Q}\mathcal{V}_\varrho(e^{-t\mathcal{L}})P_{N,r_Q^2}(\mathcal{L})(S_{l,r_B^2}(\mathcal{L})f)(y)\\
&\quad=\sup_{Q\ni x}\esss_{y\in Q}\sup_{\{t_i\}\downarrow0}\Big(\sum_{i=1}^\infty|e^{-t_i\mathcal{L}}P_{N,r_Q^2}
(S_{l,r_B^2}(\mathcal{L})f)(y)\\
&\qquad-e^{-t_{i+1}\mathcal{L}}P_{N,r_Q^2}
(S_{l,r_B^2}(\mathcal{L})f)(y)|^\varrho\Big)^{1/\varrho}\\
&\quad\leq\sup_{Q\ni x}\esss_{y\in Q}\sup_{\{t_i\}\downarrow0}\sum_{i=1}^\infty\Big|\int_{t_{i+1}}^{t_i}t\mathcal{L}
e^{-t\mathcal{L}}P_{N,r_Q^2}(\mathcal{L})(S_{l,r_B^2}(\mathcal{L})f)(y)\frac{dt}{t}\Big|\\
&\quad\leq\sup_{Q\ni x}\esss_{y\in Q}\int_0^\infty|t\mathcal{L}
e^{-t\mathcal{L}}P_{N,r_Q^2}(\mathcal{L})(S_{l,r_B^2}(\mathcal{L})f)(y)|\frac{dt}{t}\\
&\quad\lesssim\sup_{Q\ni x}\esss_{y\in Q}\int_0^\infty\int_{r_Q^2}^\infty\int_0^{r_B^2}ts^lu^N
|\mathcal{L}^{l+N+1}e^{-(s+u+t)\mathcal{L}}f(y)|\frac{ds}{s}\frac{du}{u}\frac{dt}{t}\\
&\quad\lesssim\langle|f|\rangle_B\sup_{Q\ni x}\int_0^\infty\int_{r_Q^2}^\infty\int_0^{r_B^2}\frac{ts^lu^N}{(s+u+t)^{N+2}}
(2^{2j}r_B^2)^{1-l}\frac{ds}{s}\frac{du}{u}\frac{dt}{t}\\
&\quad\leq\langle|f|\rangle_B\sup_{Q\ni x}\int_0^\infty\int_{r_Q^2}^\infty\int_0^{r_B^2}s^{l-1}
(2^jr_B)^{2-2l}\frac{u^{N-1}}{(s+u)^{N+1/2}}\frac{1}{(s+t)^{3/2}}dsdudt.
\end{align*}
A simple computation shows that
\begin{align*}
\int_0^\infty\frac{1}{(s+t)^{3/2}}dt\sim s^{-1/2},
\end{align*}
\begin{align*}
\int_{r_Q^2}^\infty\frac{u^{N-1}}{(s+u)^{N+\frac{1}{2}}}du\leq
\int_0^\infty\frac{(s+u)^{N-1}}{(s+u)^{N+\frac{1}{2}}}du\sim s^{-1/2}
\end{align*}
and
\begin{align*}
\int_0^{r_B^2}s^{l-2}(2^jr_B)^{2-2l}ds\sim2^{-j(2l-2)}.
\end{align*}
Hence,
\begin{align*}
\mathcal{M}_{\mathcal{V}_\varrho(e^{-t\mathcal{L}})}^1(S_{l,r_B^2}(\mathcal{L})f)(x)
\lesssim2^{-j(2l-2)}\langle|f|\rangle_B\leq2^{-j\alpha}\langle|f|\rangle_B.
\end{align*}
This proves \eqref{criterion}, and then $\mathcal{M}_{\mathcal{V}_\varrho(e^{-t\mathcal{L}})}^1$ is of weak type $(1,1)$.

Next, we consider $\mathcal{M}_{\mathcal{V}_\varrho(e^{-t\mathcal{L}})}^2$. Note that
\begin{align*}
|\mathcal{L}^{k+1}e^{-(s+t)\mathcal{L}}(f\chi_{3Q})(y)|&=
(s+t)^{-k-1}|((s+t)\mathcal{L})^{k+1}e^{-(s+t)\mathcal{L}}(f\chi_{3Q})(y)|\\
&\lesssim(s+t)^{-k-1}\int_{3Q}(s+t)^{-n/2}|f(z)|dz.\nonumber
\end{align*}
Then we have
\begin{align*}
&\|\mathcal{V}_\varrho(e^{-t\mathcal{L}})P_{N,r_Q^2}(f\chi_{3Q})\|_{L^\infty(Q)}\\
&\quad=\esss_{y\in Q}\sup_{\{t_j\}\downarrow0}\sum_{i=1}^\infty\Big|\int_{t_{i+1}}^{t_i}t\mathcal{L}
e^{-t\mathcal{L}}P_{N,r_Q^2}(f\chi_{3Q})(y)\frac{dt}{t}\Big|\\
&\quad\leq\esss_{y\in Q}\int_0^\infty|t\mathcal{L}
e^{-t\mathcal{L}}P_{N,r_Q^2}(f\chi_{3Q})(y)|\frac{dt}{t}\\
&\quad\lesssim\sum_{k=0}^{N-1}\esss_{y\in Q}\int_0^\infty\int_{r_Q^2}^\infty\frac{ts^k}{(s+t)^{k+1}}|((s+t)\mathcal{L})^{k+1}
e^{-(s+t)\mathcal{L}}(f\chi_{3Q})(y)|\frac{ds}{s}\frac{dt}{t}\\
&\quad\lesssim\sum_{k=0}^{N-1}\int_0^\infty\int_{r_Q^2}^\infty\frac{ts^k}{(s+t)^{k+1}}
\int_{3Q}\frac{1}{(s+t)^{n/2}}|f(z)|dz\frac{ds}{s}\frac{dt}{t}\\
&\quad=\sum_{k=0}^{N-1}\int_0^\infty\int_{r_Q^2}^\infty\int_{3Q}\frac{ts^k}{(s+t)^{k+\frac{n}{2}+1}}
|f(z)|dz\frac{ds}{s}\frac{dt}{t}\\
&\quad\sim\sum_{k=0}^{N-1}\int_0^\infty\int_{r_Q^2}^\infty\frac{ts^kr_Q^n}
{(s+t)^{k+\frac{n}{2}+1}}\frac{ds}{s}\frac{dt}{t}\langle|f|\rangle_{3Q}\lesssim Mf(x),
\end{align*}
where we use the fact that
\begin{equation*}
\int_0^\infty\frac{t}{(s+t)^{k+\frac{n}{2}+1}}\frac{dt}{t}\sim s^{-k-\frac{n}{2}},~
\int_{r_Q^2}^\infty r_Q^ns^{-1-\frac{n}{2}}ds\sim1
\end{equation*}
in the last inequality. Thus, the weak type $(1,1)$ of $\mathcal{M}_{\mathcal{V}_\varrho(e^{-t\mathcal{L}})}^2$ follows from the weak type $(1,1)$ of $M$.

Finally, we deal with $\mathcal{M}_{\mathcal{V}_\varrho(e^{-t\mathcal{L}})}^3$. Since $e^{-x}\lesssim x^{-\beta}$ for any $\beta>0$, for $y\in Q$, we have
\begin{align*}
&|\mathcal{L}^{N+1}e^{-(s+t)\mathcal{L}}(f\chi_{\mathbb{R}^n\backslash3Q})(y)|\\
&\quad\lesssim(s+t)^{-N-1}\int_{\mathbb{R}^n}(s+t)^{-n/2}e^{-\frac{|z-y|^2}{c(s+t)}}|f(z)|
\chi_{\mathbb{R}^n\backslash2Q}(z)dz\\
&\quad\lesssim\sum_{k=2}^\infty(s+t)^{-N-1}\int_{2^kQ\backslash2^{k-1}Q}(s+t)^{-n/2}
\frac{(s+t)^{\frac{n}{2}+N-1}}{|z-y|^{2(\frac{n}{2}+N-1)}}|f(z)|dz\\
&\quad\lesssim\sum_{k=2}^\infty\int_{2^kQ}
\frac{1}{(s+t)^2(2^kr_Q)^{n+2N-2}}|f(z)|dz.
\end{align*}
From this, we deduce that
\begin{align*}
&\|\mathcal{V}_\varrho(e^{-t\mathcal{L}})S_{N,r_Q^2}(\mathcal{L})
(f\chi_{\mathbb{R}^n\backslash3Q})\|_{L^\infty(Q)}\\
&\quad=\esss_{y\in Q}\sup_{\{t_i\}\downarrow0}\Big(\sum_{i=1}^\infty|e^{-t_i\mathcal{L}}S_{N,r_Q^2}(\mathcal{L})
(f\chi_{\mathbb{R}^n\backslash3Q})(y)\\
&\qquad-e^{-t_{i+1}\mathcal{L}}S_{N,r_Q^2}(\mathcal{L})
(f\chi_{\mathbb{R}^n\backslash3Q})(y)|^\varrho\Big)^{1/\varrho}\\
&\quad\leq\esss_{y\in Q}\int_0^\infty|t\mathcal{L}e^{-t\mathcal{L}}S_{N,r_Q^2}(\mathcal{L})
(f\chi_{\mathbb{R}^n\backslash3Q})(y)|\frac{dt}{t}\\
&\quad\lesssim\esss_{y\in Q}\int_0^\infty\int_0^{r_Q^2}ts^N|\mathcal{L}^{N+1}e^{-(s+t)\mathcal{L}}
(f\chi_{\mathbb{R}^n\backslash3Q})(y)|\frac{ds}{s}\frac{dt}{t}\\
&\quad\lesssim\sum_{k=2}^\infty\int_0^\infty\int_0^{r_Q^2}\int_{2^kQ}
\frac{s^{N-1}}{(s+t)^{2}(2^kr_Q)^{n+2N-2}}|f(z)|dzdsdt.
\end{align*}
It is easy to see that
\begin{equation*}
\int_0^{r_Q^2}(2^kr_Q)^{-2N+2}s^{N-2}ds\sim2^{-2(N-1)k},~\int_0^\infty(s+t)^{-2}dt= s^{-1}.
\end{equation*}
Then we further have
\begin{equation*}
\|\mathcal{V}_\varrho(e^{-t\mathcal{L}})S_{N,r_Q^2}(\mathcal{L})
(f\chi_{\mathbb{R}^n3Q})\|_{L^\infty(Q)}\lesssim\sum_{k=2}^\infty2^{-2(N-1)k}\langle|f|\rangle_{2^kQ}
\lesssim Mf(x).
\end{equation*}
It follows that $\mathcal{M}_{\mathcal{V}_\varrho(e^{-t\mathcal{L}})}^3$ is of weak type $(1,1)$. This completes the proof.
\end{proof}
\begin{proof}[Proof of Theorem \ref{general quantitative Ap}]
Let $\{B_j:=B(x_j,\rho(x_j))\}$ be a family of critical balls in Lemma \ref{covering lemma}. Then for each $B_j$, there exists $Q_j\in\mathcal{D}$ such that $B_j\subset Q_j\subset\alpha_0B_j$ (see \cite{HyKa}). Hence, we also have $\bigcup_jQ_j=\mathbb{R}^n$ and $\sum_j\chi_{Q_j}\leq C$ for some $C>0$. From this, we write
\begin{align}\label{I1+I2}
\|\mathcal{V}_\varrho(e^{-t\mathcal{L}})f\|_{L^p(\omega)}^p&\leq\sum_j\int_{B_j}
\mathcal{V}_\varrho(e^{-t\mathcal{L}})(f)(x)^p\omega(x)dx\\
&\lesssim\sum_j\int_{B_j}\mathcal{V}_\varrho(e^{-t\mathcal{L}})
(f\chi_{\mathbb{R}^n\backslash3Q_j})(x)^p\omega(x)dx
\nonumber\\
&\quad+\sum_j\int_{B_j}\mathcal{V}_\varrho(e^{-t\mathcal{L}})(f\chi_{3Q_j})(x)^p\omega(x)dx\nonumber\\
&=:I_1+I_2.\nonumber
\end{align}

We first deal with the term $I_1$. By noting that $\rho(x)\sim\rho(x_j)$ for $x\in B_j$, the kernel of $t\mathcal{L}e^{-t\mathcal{L}}$ also satisfies \eqref{upper bound}, we have
\begin{align*}
\mathcal{V}_\varrho(e^{-t\mathcal{L}})(f\chi_{\mathbb{R}^n\backslash3Q_j})(x)&\leq
\sup_{\{t_k\}\downarrow0}\sum_k\Big|\int_{t_{k+1}}^{t_k}t\mathcal{L}e^{-t\mathcal{L}}
(f\chi_{\mathbb{R}^n\backslash3Q_j})(x)\frac{dt}{t}\Big|\\
&\leq\int_0^\infty|t\mathcal{L}e^{-t\mathcal{L}}
(f\chi_{\mathbb{R}^n\backslash3Q_j})(x)|\frac{dt}{t}\\
&\lesssim\int_0^\infty\int_{\mathbb{R}^n\backslash 3 B_j}t^{-n/2}\Big(1+\frac{\sqrt{t}}{\rho(x_j)}\Big)^{-\sigma-1}e^{-\frac{|x-y|^2}{ct}}|f(y)|
dy\frac{dt}{t},
\end{align*}
where $\sigma$ is determined later. Since $|y-x_j|>\rho(x_j)$ for $y\in\mathbb{R}^n\backslash 3 B_j$, we divide $\mathcal{V}_\varrho(e^{-t\mathcal{L}})(f\chi_{\mathbb{R}^n\backslash3Q_j})$ into the following three terms:
\begin{align}\label{global}
&\mathcal{V}_\varrho(e^{-t\mathcal{L}})(f\chi_{\mathbb{R}^n\backslash3Q_j})(x)\\
&\quad\lesssim\int_0^{\rho(x_j)^2}\int_{\mathbb{R}^n\backslash 3 B_j}t^{-n/2}\Big(1+\frac{\sqrt{t}}{\rho(x_j)}\Big)^{-\sigma-1}e^{-\frac{|x-y|^2}{ct}}|f(y)|dy\frac{dt}{t}
\nonumber\\
&\qquad+\int_{\rho(x_j)^2}^{|y-x_j|^2}\int_{\mathbb{R}^n\backslash 3 B_j}t^{-n/2}\Big(1+\frac{\sqrt{t}}{\rho(x_j)}\Big)^{-\sigma-1}e^{-\frac{|x-y|^2}{ct}}|f(y)|dy\frac{dt}{t}
\nonumber\\
&\qquad+\int_{|y-x_j|^2}^\infty\int_{\mathbb{R}^n\backslash 3 B_j}t^{-n/2}\Big(1+\frac{\sqrt{t}}{\rho(x_j)}\Big)^{-\sigma-1}e^{-\frac{|x-y|^2}{ct}}|f(y)|dy\frac{dt}{t}\nonumber\\
&\quad=:I_{11}+I_{12}+I_{13}.\nonumber
\end{align}
Before we estimate $I_{11}$, $I_{12}$, $I_{13}$. We prove an estimate that is frequently used. For any $\sigma>0$,
\begin{align}\label{frequently used}
&\sum_{k=1}^\infty\frac{1}{|3^{k+1}B_j|}\int_{3^{k+1}B_j}|f(y)|
\Big(\frac{\rho(x_j)}{3^k\rho(x_j)}\Big)^{\sigma+1}dy\\
&\quad\lesssim\sum_{k=1}^\infty3^{-k}\Big(1+\frac{3^{k+1}\rho(x_j)}{\rho(x_j)}\Big)^{-\sigma}
\frac{1}{|3^{k+1}B_j|}\int_{3^{k+1}B_j}|f(y)|dy\lesssim M^{\rho,\sigma}f(x).\nonumber
\end{align}

We consider $I_{11}$ first. For any $\sigma>0$ and $x\in B_j$, by \eqref{frequently used},
\begin{align}\label{I1}
I_1&\lesssim\int_0^{\rho(x_j)^2}\int_{\mathbb{R}^n\backslash 3 B_j}t^{-n/2}\frac{|f(y)|}{|x-y|^{n+\sigma+1}}t^{\frac{n+\sigma+1}{2}}dy\frac{dt}{t}\\
&=\int_0^{\rho(x_j)^2}\int_{\mathbb{R}^n\backslash 3 B_j}\frac{|f(y)|}{|x-y|^n}
\Big(\frac{\sqrt{1}}{|x-y|}\Big)^{\sigma+1}dyt^{\frac{\sigma+1}{2}-1}dt\nonumber\\
&\lesssim\sum_{k=1}^\infty\int_{3^{k+1}B_j\backslash3^kB_j}\frac{|f(y)|}{|x-y|^n}
\Big(\frac{\rho(x_j)}{|x-y|}\Big)^{\sigma+1}dy\nonumber\\
&\lesssim\sum_{k=1}^\infty\frac{1}{|3^{k+1}B_j|}\int_{3^{k+1}B_j}|f(y)|
\Big(\frac{\rho(x_j)}{3^k\rho(x_j)}\Big)^{\sigma+1}dy\nonumber\\
&\lesssim M^{\rho,\sigma}f(x).\nonumber
\end{align}

While for $I_{12}$. For any $\sigma>0$ and $x\in B_j$, again by \eqref{frequently used},
\begin{align}\label{I2}
I_{12}&\lesssim\int_{\rho(x_j)^2}^{|x_j-y|^2}\int_{\mathbb{R}^n\backslash3B_j}
\frac{|f(y)|}{t^{n/2}}\Big(\frac{\rho(x_j)}{\sqrt{t}}\Big)^{\sigma+1}\frac{t^{\frac{\sigma+n+2}{2}}}
{|x-y|^{\sigma+n+2}}
dy\frac{dt}{t}\\
&\leq\int_0^{|x_j-y|^2}\int_{\mathbb{R}^n\backslash3B_j}
\frac{|f(y)|}{|x-y|^n}\Big(\frac{\rho(x_j)}{\sqrt{t}}\Big)^{\sigma+1}
\Big(\frac{\sqrt{t}}{|x-y|}\Big)^{\sigma+2}dy\frac{dt}{t}\nonumber\\
&\sim\int_{\mathbb{R}^n\backslash3B_j}\frac{|f(y)|}{|x-y|^n}
\Big(\frac{\rho(x_j)}{|x-y|}\Big)^{\sigma+1}dy\nonumber\\
&\lesssim\sum_{k=1}^\infty\frac{1}{|3^{k+1}B_j|}\int_{3^{k+1}B_j}|f(y)|
\Big(\frac{\rho(x_j)}{3^k\rho(x_j)}\Big)^{\sigma+1}dy\nonumber\\
&\lesssim M^{\rho,\sigma}f(x).\nonumber
\end{align}

Finally, we deal with $I_{13}$. For any $\sigma>0$ and $x\in B_j$, by \eqref{frequently used}, we find that
\begin{align}\label{I3}
I_{13}&\lesssim\int_{|x_j-y|^2}^{\infty}\int_{\mathbb{R}^n\backslash3B_j}
\frac{|f(y)|}{|x_j-y|^n}\Big(\frac{\rho(x_j)}{\sqrt{t}}\Big)^{\sigma+1}dy\frac{dt}{t}\\
&\sim\int_{\mathbb{R}^n\backslash3B_j}
\frac{|f(y)|}{|x_j-y|^n}\Big(\frac{\rho(x_j)}{|x_j-y|}\Big)^{\sigma+1}dy\nonumber\\
&\lesssim\sum_{k=1}^\infty\frac{1}{|3^{k+1}B_j|}\int_{3^{k+1}B_j}|f(y)|
\Big(\frac{\rho(x_j)}{3^k\rho(x_j)}\Big)^{\sigma+1}dy\nonumber\\
&\lesssim M^{\rho,\sigma}f(x).\nonumber
\end{align}

Combining with \eqref{global},\eqref{I1}-\eqref{I3}, for $x\in B_j$, we obtain
\begin{equation}\label{global estimate}
\mathcal{V}_\varrho(e^{-t\mathcal{L}})(f\chi_{\mathbb{R}^n\backslash3Q_j})(x)\lesssim
M^{\rho,\sigma}f(x).
\end{equation}
By choosing $\sigma>\theta/(p-1)$ and Lemmas \ref{quantitative weighted bounds for Hardy-Littlewood maximal operators} and \ref{covering lemma}, it follows that
\begin{align}\label{I}
I_1&\lesssim\sum_j\int_{B_j}M^{\rho,\sigma}f(x)^p\omega(x)dx\\
&\lesssim\int_{\mathbb{R}^n}M^{\rho,\sigma}f(x)^p\omega(x)dx\nonumber\\
&\lesssim[\omega]_{A_{p}^{\rho,\theta}}^{p\max\{1,\frac{1}{p-1}\}}\|f\|_{L^p(\omega)}^p.\nonumber
\end{align}

Next, we take care of the term $I_2$. In virtue of Proposition \ref{principle} and Proposition \ref{weak type maximal operator}, there exists a sparse family $\mathcal{S}_j\subseteq Q_j$ for each $j$ such that
\begin{equation*}
\mathcal{V}_\varrho(e^{-t\mathcal{L}})(f\chi_{3Q_j})(x)\lesssim\sum_{P\in S_j}\langle|f|\rangle_{3P}\chi_P(x),~x\in Q_j.
\end{equation*}
Observe that $P\subseteq Q_j$, the definition of critical radius function yields that
\begin{equation*}
r_P\leq r_{Q_j}\sim\rho(x_j)\sim\rho(x_P),
\end{equation*}
where $r_P$, $x_P$ are the side-length and center of the cube $P$, respectively. Then for any $N>0$ and $x\in Q_j$,
\begin{align*}
\sum_{P\in\mathcal{S}_j}\langle|f|\rangle_{3P}\chi_P(x)=
\sum_{P\in\mathcal{S}_j}\langle|f|\rangle_{3P}\Big(1+\frac{r_p}{\rho(x_p)}\Big)^{-N}
\Big(1+\frac{r_p}{\rho(x_p)}\Big)^{N}\chi_P(x)
\lesssim\mathcal{A}_{\mathcal{S}_j}^{\rho,N}f(x),
\end{align*}
which further implies that
\begin{equation}\label{local estimate}
\mathcal{V}_\varrho(e^{-t\mathcal{L}})(f\chi_{3Q_j})(x)
\lesssim\mathcal{A}_{\mathcal{S}_j}^{\rho,N}(f\chi_{3Q_j})(x),~x\in Q_j.
\end{equation}
By taking $N\geq\theta\max\{1,\frac{1}{p-1}\}$ and using Lemmas \ref{covering lemma}, \ref{weighted estimates for sparse operator}, we get that
\begin{align}\label{II}
I_2&\lesssim\sum_j\int_{\mathbb{R}^n}\mathcal{A}_{\mathcal{S}_j}^{\rho,N}(f\chi_{3Q_j})(x)^p
\omega(x)dx\\
&\lesssim[\omega]_{A_p^{\rho,\theta}}^{p\max\{1,\frac{1}{p-1}\}}\sum_j\int_{\mathbb{R}^n}
|f(x)|^p\chi_{3Q_j}(x)\omega(x)dx\nonumber\\
&\lesssim[\omega]_{A_p^{\rho,\theta}}^{p\max\{1,\frac{1}{p-1}\}}\int_{\mathbb{R}^n}
|f(x)|^p\omega(x)dx.\nonumber
\end{align}

Hence, by \eqref{I1+I2}, \eqref{I} and \eqref{II}, we have that
\begin{equation*}
\|\mathcal{V}_\varrho(e^{-t\mathcal{L}})f\|_{L^p(\omega)}\lesssim[\omega]_{A_p^{\rho,\theta}}^{\max\{1,\frac{1}{p-1}\}}
\|f\|_{L^p(\omega)}.
\end{equation*}
This is the expected result.
\end{proof}

Next, we prove Theorem \ref{general quantitative two weight}. The two-weight inequality for $M^{\rho,\sigma}$ will be used.

\begin{proof}[Proof of Theorem \ref{general quantitative two weight}]
We still use the notations as in the proof of Theorem \ref{quantitative Ap}. It is trivial that
\begin{align*}
\|\mathcal{V}_\varrho(e^{-t\mathcal{L}})f\|_{L^p(\nu)}^p&\lesssim\sum_j\int_{B_j}
\mathcal{V}_\varrho(e^{-t\mathcal{L}})(f\chi_{\mathbb{R}^n\backslash3Q_j})(x)^p\nu(x)dx\\
&\quad+\sum_j\int_{B_j}
\mathcal{V}_\varrho(e^{-t\mathcal{L}})(f\chi_{3Q_j})(x)^p\nu(x)dx\\
&=:J+K.
\end{align*}

In virtue of \eqref{global estimate}, Lemmas \ref{quantitative weighted bounds for Hardy-Littlewood maximal operators} and \ref{covering lemma}, choose $\sigma$ large enough, we have
\begin{align*}
J&\lesssim\sum_j\int_{B_j}M^{\rho,\sigma}f(x)^p\nu(x)dx\\
&\lesssim\int_{\mathbb{R}^n}M^{\rho,\sigma}f(x)^p\nu(x)dx\\
&\lesssim[\mu,\nu]_{\Phi,\Psi,p,\rho,\theta}^p[\bar{\Psi}]_{B_p}\|f\|_{L^p(\mu)}^p,
\end{align*}
which implies that
$$J^{1/p}\lesssim[\mu,\nu]_{\Phi,\Psi,p,\rho,\theta}[\bar{\Psi}]_{B_p}^{1/p}\|f\|_{L^p(\mu)}.$$

Next, we consider $K$. We use \eqref{global estimate} and duality to get that
\begin{align*}
K&\lesssim\sum_j\int_{\mathbb{R}^n}\mathcal{A}_{\mathcal{S}_j}^{\rho,\theta}(f\chi_{3Q_j})(x)^p\nu(x)dx\\
&=\sum_j\left(\underset{g\geq0}{\sup_{\|g\|_{L^{p'}(\nu)\leq1}}}\int_{\mathbb{R}^n}
\mathcal{A}_{\mathcal{S}_j}^{\rho,\theta}(f\chi_{3Q_j})(x)g(x)\nu(x)dx\right)^p.
\end{align*}
Since $\mathcal{S}_j$ is a sparse family, by making use of generalized H\"{o}lder's inequality, the bump condition, Lemma \ref{boundedness of Orlicz maximal operator}, we deduce that
\begin{align*}
&\int_{\mathbb{R}^n}
\mathcal{A}_{\mathcal{S}_j}^{\rho,\theta}(f\chi_{3Q_j})(x)g(x)\nu(x)dx\\
&\quad\lesssim\sum_{Q\in\mathcal{S}_j}\Big(\frac{1}{|3Q|}\int_{3Q}|f(x)\chi_{3Q_j}|dx\Big)
\Big(1+\frac{r_{3Q}}{\rho(x_Q)}\Big)^{-\theta}\int_Qg(x)\nu(x)dx\\
&\quad\lesssim\|f\chi_{3Q_j}\mu^{1/p}\|_{\bar{\Psi},3Q}\|\mu^{-1/p}\|_{\Psi,3Q}
\|g\nu^{1/p'}\|_{\bar{\Phi},3Q}\|\nu^{1/p}\|_{\Phi,3Q}|E_Q|\Big(1+\frac{r_{3Q}}{\rho(x_Q)}\Big)^{-\theta}\\
&\quad\leq[\mu,\nu]_{\Phi,\Psi,p,\rho,\theta}\int_{\mathbb{R}^n}M_{\bar{\Psi}}
(f\chi_{3Q_j}\mu^{1/p})(x)M_{\bar{\Phi}}(g\nu^{1/p'})(x)dx\\
&\quad\leq[\mu,\nu]_{\Phi,\Psi,p,\rho,\theta}\|M_{\bar{\Psi}}(f\chi_{3Q_j}\mu^{1/p})\|_{L^p}
\|M_{\bar{\Phi}}(g\nu^{1/p'})\|_{L^{p'}}\\
&\quad\lesssim[\mu,\nu]_{\Phi,\Psi,p,\rho,\theta}[\bar{\Phi}]_{B_{p'}}^{1/p'}[\bar{\Psi}]_{B_p}^{1/p}
\|f\chi_{3Q_j}\|_{L^{p}(\mu)}\|g\|_{L^{p'}(\nu)}.
\end{align*}
Hence, utilize Lemma \ref{covering lemma}, we have
\begin{align*}
K^{1/p}&\lesssim\Big(\sum_j[\mu,\nu]_{\Phi,\Psi,p,\rho,\theta}^p[\bar{\Phi}]_{B_{p'}}^{p/p'}
[\bar{\Psi}]_{B_p}\int_{\mathbb{R}^n}|f(x)|^p\chi_{3Q_j}(x)\mu(x)dx\Big)^{1/p}\\
&\lesssim[\mu,\nu]_{\Phi,\Psi,p,\rho,\theta}[\bar{\Phi}]_{B_{p'}}^{1/p'}[\bar{\Psi}]_{B_p}^{1/p}
\|f\|_{L^{p}(\mu)}.
\end{align*}
Combining the estimates of $J^{1/p}$ and $K^{1/p}$, we prove the result.
\end{proof}

\begin{proof}[Proofs of Theorems \ref{quantitative Ap} and \ref{quantitative two weight}]
When $\mathcal{L}$ is a Schr\"{o}dinger operator, it is easy to check that the kernel of $e^{-t\mathcal{L}}$ satisfies \eqref{upper bound}. Moreover, we know from \cite{BetFHR} that $\mathcal{V}_\varrho(e^{-t\mathcal{L}})$ is of weak type $(1,1)$ and bounded on $L^2(\mathbb{R}^n)$. Therefore, Theorems \ref{quantitative Ap} and \ref{quantitative two weight} directly follow from Theorems \ref{general quantitative Ap} and \ref{general quantitative two weight}, respectively.
\end{proof}

\section{Mixed weak type inequality for variation operators}
This section is going to prove Theorem \ref{mixed weak type inequality for variation of heat semigroup in Sch}. We first establish weighted mixed weak type inequality for variation operators associated with classical heat semigroups.
\begin{lemma}\label{mixed weak type inequality for variation operators associated with classical heat semigroup}
Let $\varrho>2$, $\mu\in A_{1}$ and $\nu\in A_\infty$. Then for any $\lambda>0$, there holds
\begin{align*}
\lambda\mu\nu\Big(\Big\{x\in\mathbb{R}^n:\frac{\mathcal{V}_\varrho(e^{-t\Delta})(f\nu)(x)}{\nu(x)}>
\lambda\Big\}\Big)\lesssim\int_{\mathbb{R}^n}|f(x)|\mu(x)\nu(x)dx.
\end{align*}
\end{lemma}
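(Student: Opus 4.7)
The plan is to adapt the Calder\'on--Zygmund decomposition strategy of Cruz-Uribe--Martell--P\'erez and Li--Ombrosi--P\'erez to the variation operator. The key structural fact is that the classical heat kernel $p_t(x,y)=(4\pi t)^{-n/2}e^{-|x-y|^2/(4t)}$ enjoys full Gaussian size and regularity estimates; in particular
$$|\partial_t p_t(x,y)|\lesssim t^{-1-n/2}e^{-c|x-y|^2/t},$$
so by Minkowski's inequality applied inside the $\varrho$-variation norm the operator $\mathcal{V}_\varrho(e^{-t\Delta})$ behaves like a Calder\'on--Zygmund operator with a H\"ormander-type regular kernel, which is what the standard mixed weak type machinery requires.

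After a scaling reduction to $\lambda=1$, I would perform a Calder\'on--Zygmund decomposition of $f$ at height $1$ with respect to the doubling measure $\mu\nu$, writing $f=g+b$ with $b=\sum_j b_j$, $|g|\lesssim 1$ almost everywhere, the $b_j$ supported on pairwise disjoint cubes $Q_j$ satisfying $\int b_j\,d(\mu\nu)=0$, and $\sum_j\mu\nu(Q_j)\lesssim \|f\|_{L^1(\mu\nu)}$. Setting $\Omega:=\bigcup_j 2Q_j$, the doubling of $\mu\nu$ yields $\mu\nu(\Omega)\lesssim \|f\|_{L^1(\mu\nu)}$, so it remains to bound the level set on $\Omega^c$. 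For the good part, since $\mu\in A_1$ and $\nu\in A_\infty$, a Jones-factorization-type argument gives $w:=\mu\nu^{1-p_0}\in A_{p_0}$ for some $p_0>1$; Chebyshev's inequality together with the $L^{p_0}(w)$ boundedness of $\mathcal{V}_\varrho(e^{-t\Delta})$ (the classical $\rho\equiv\infty$ case of Theorem \ref{general quantitative Ap}) and the bound $|g|\lesssim 1$ yield the desired $\|f\|_{L^1(\mu\nu)}$ control of the good part.

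For the bad part, the cancellation $\int b_j\,d(\mu\nu)=0$ combined with the regularity estimate above gives the pointwise H\"ormander-type bound
$$\mathcal{V}_\varrho(e^{-t\Delta})(b_j\nu)(x)\lesssim \int_{Q_j}\frac{r_{Q_j}}{|x-x_{Q_j}|^{n+1}}|b_j(y)|\nu(y)\,dy,\qquad x\notin 2Q_j.$$
Integrating against $\mu$ over $\Omega^c$ via Fubini, using an annular decomposition around each $Q_j$ and the $A_1$ property of $\mu$, produces the desired $L^1(\mu\nu)$ bound. The chief obstacle will be the division by $\nu(x)$: because $\nu$ is only $A_\infty$ (not $A_1$), one cannot bound $\nu(y)/\nu(x)$ pointwise and must instead transfer the $\nu$-factor through the $A_\infty$ reverse H\"older inequality after interchanging orders of integration, in the spirit of the Li--Ombrosi--P\'erez device. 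This is precisely the step that will break down in the Schr\"odinger analogue, as the subsequent remark warns, because the perturbation formula there couples three spatial variables rather than two.
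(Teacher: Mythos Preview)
Your approach differs from the paper's, and the bad-part step has a genuine gap. The paper does not use a Calder\'on--Zygmund decomposition: it quotes the pointwise sparse domination for $\mathcal{V}_\varrho(e^{-t\Delta})$ from \cite{GuWWY}, deduces from it the Coifman--Fefferman inequality $\|\mathcal{V}_\varrho(e^{-t\Delta})f\|_{L^p(\omega)}\lesssim\|Mf\|_{L^p(\omega)}$ for all $0<p<\infty$ and $\omega\in A_\infty$, and then invokes the extrapolation scheme of \cite{LiOP}, which transfers the mixed weak type estimate directly from $M$ (where it is already known) to $\mathcal{V}_\varrho(e^{-t\Delta})$. No good/bad splitting is involved.

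The gap in your outline is a cancellation mismatch. Decomposing $f$ with respect to the measure $\mu\nu$ yields $\int_{Q_j} b_j\,\mu\nu=0$, but the H\"ormander bound you write for $\mathcal{V}_\varrho(e^{-t\Delta})(b_j\nu)$ requires $\int_{Q_j} b_j\,\nu=0$: to gain the extra factor $r_{Q_j}/|x-x_{Q_j}|$ one must subtract $p_t(x,x_{Q_j})\int_{Q_j}b_j(y)\nu(y)\,dy$ inside the $\varrho$-variation, and that integral does not vanish. Without this extra decay the off-diagonal estimate is only of order $|x-x_{Q_j}|^{-n}$, and integrating $\mu$ over dyadic annuli produces the divergent sum $\sum_{k\ge1}\langle\mu\rangle_{2^kQ_j}$. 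The ``Li--Ombrosi--P\'erez device'' you allude to is in fact the Coifman--Fefferman extrapolation route the paper follows, not an $A_\infty$ reverse-H\"older manoeuvre inside a Calder\'on--Zygmund argument; for general $\nu\in A_\infty$ the direct decomposition approach you sketch is not known to go through.
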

\begin{proof}
For each $f\in L_c^\infty(\mathbb{R}^n)$, it was pointed out in \cite{GuWWY} that there exist $3^n$
dyadic lattices $D^j$ and $1/(2\cdot9^n)$-sparse families $S_j\subseteq D^j$ such that for a.e. $x\in\mathbb{R}^n$,
\begin{align*}
\mathcal{V}_\varrho(e^{-t\Delta})(f)(x)\lesssim\sum_{j=1}^{3^n}\sum_{Q\in S_j}\Big(\frac{1}{|Q|}\int_Q|f(x)|dx\Big)\chi_Q(x).
\end{align*}
Thus, following a similar scheme as in \cite{WWZ}, for $0<p<\infty$ and $\omega\in A_\infty$, one can prove the following Coifman-Fefferman inequality,
\begin{align*}
\|\mathcal{V}_\varrho(e^{-t\Delta})(f)\|_{L^p(\omega)}\lesssim\|Mf\|_{L^p(\omega)},
\end{align*}
this combined with the standard methods in \cite{LiOP} leads to the desired result.
\end{proof}

Relied upon Lemma \ref{mixed weak type inequality for variation operators associated with classical heat semigroup}, we have the following lemma.
\begin{lemma}\label{mixed weak type inequality for local}
Let $\varrho>2$, $n\geq3$, $\rho$ be a critical radius function, $B_x:=B(x,\rho(x))$ with $x\in\mathbb{R}^n$. Let $\mu\in A_{1}^{\rho,loc}$, $\nu\in A_\infty^{\rho,loc}$. Then for any $\lambda>0$, there holds
\begin{align*}
\lambda\mu\nu\Big(\Big\{x\in\mathbb{R}^n:\frac{\mathcal{V}_\varrho(e^{-t\Delta})(f\chi_{B_x}\nu)(x)}{\nu(x)}>
\lambda\Big\}\Big)\lesssim\int_{\mathbb{R}^n}|f(x)|\mu(x)\nu(x)dx.
\end{align*}
\end{lemma}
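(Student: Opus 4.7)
The plan is to reduce the statement to Lemma 4.1 via a critical-ball decomposition, extending the local-weight hypotheses to global $A_p$-conditions through Lemma 2.11, and then absorbing the $x$-dependent cutoff $\chi_{B_x}$ by a perturbation argument. Let $\{B_j=B(x_j,\rho(x_j))\}$ be the covering from Lemma 2.13. Since $\rho(x)\sim\rho(x_j)$ whenever $x\in B_j$, I can fix a constant $\kappa>1$ (depending only on the constants in \eqref{critical radius function}) with $B_x\subset\kappa B_j$ for all $x\in B_j$. Using the sublinearity of $\mathcal{V}_\varrho(e^{-t\Delta})$, I split
$$\mathcal{V}_\varrho(e^{-t\Delta})(f\chi_{B_x}\nu)(x)\leq \underbrace{\mathcal{V}_\varrho(e^{-t\Delta})(f\chi_{\kappa B_j}\nu)(x)}_{T_1^j(x)}+\underbrace{\mathcal{V}_\varrho(e^{-t\Delta})(f\chi_{\kappa B_j\setminus B_x}\nu)(x)}_{T_2^j(x)}.$$

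For the main term $T_1^j$, Remark \ref{remark 2.4}(2) gives $A_p^{\rho,loc}=A_p^{\kappa\rho,loc}$, so $\mu\in A_1^{\kappa\rho,loc}$ and $\nu\in A_{p_0}^{\kappa\rho,loc}$ for some $p_0\geq1$. Restricting to $\kappa B_j\in\mathcal{B}_{\kappa\rho}$ yields $\mu|_{\kappa B_j}\in A_1(\kappa B_j)$ and $\nu|_{\kappa B_j}\in A_{p_0}(\kappa B_j)$ with constants uniform in $j$. Invoking Lemma \ref{extension}, extend these to global weights $\tilde\mu_j\in A_1$ and $\tilde\nu_j\in A_{p_0}\subset A_\infty$, again with uniformly bounded characteristics. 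Applying Lemma \ref{mixed weak type inequality for variation operators associated with classical heat semigroup} to $(\tilde\mu_j,\tilde\nu_j)$ and to the function $f\chi_{\kappa B_j}$ (so that $(f\chi_{\kappa B_j})\tilde\nu_j=(f\chi_{\kappa B_j})\nu$), and noting that $\mu\nu=\tilde\mu_j\tilde\nu_j$, $\nu=\tilde\nu_j$ on $B_j\subset\kappa B_j$, produces
$$\lambda\,\mu\nu\big(\{x\in B_j:T_1^j(x)/\nu(x)>\lambda\}\big)\lesssim\int_{\kappa B_j}|f|\mu\nu\,dx.$$

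For the error $T_2^j$, the support of $f\chi_{\kappa B_j\setminus B_x}$ lies at distance $\geq\rho(x)$ from $x$, so using
$$\mathcal{V}_\varrho(e^{-t\Delta})(g)(x)\leq\int_0^\infty|t\Delta e^{-t\Delta}g(x)|\,\frac{dt}{t}$$
combined with the Gaussian kernel bound and the computation $\int_0^\infty t^{-n/2-1}e^{-|x-y|^2/(ct)}\,dt\sim|x-y|^{-n}$ (identical in spirit to the handling of $I_{11},I_{12},I_{13}$ in the proof of Theorem \ref{general quantitative Ap}), a dyadic-annulus sum over $\{y:2^k\rho(x)\leq|x-y|<2^{k+1}\rho(x)\}$ yields the pointwise estimate $T_2^j(x)\lesssim M(f\chi_{\kappa B_j}\nu)(x)$ for $x\in B_j$. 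The classical mixed weak-type estimate for $M$ applied to the same extended pair $(\tilde\mu_j,\tilde\nu_j)$ then controls the $T_2^j$-contribution by $\int_{\kappa B_j}|f|\mu\nu\,dx$. Summing the two contributions over $j$ and invoking the bounded overlap $\sum_j\chi_{\kappa B_j}\lesssim1$ from Lemma \ref{covering lemma} gives the stated inequality. The principal obstacle is precisely the $x$-dependent cutoff $\chi_{B_x}$, which forces the introduction of $T_2^j$ and the extra off-diagonal kernel analysis; the other delicate point is ensuring that Lemma \ref{extension} is applied on balls in the enlarged family $\mathcal{B}_{\kappa\rho}$ so that the extended constants remain uniform in~$j$.
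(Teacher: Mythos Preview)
Your argument is correct and shares the paper's skeleton---critical-ball covering, local-to-global weight extension via Lemma~\ref{extension}, and reduction to Lemma~\ref{mixed weak type inequality for variation operators associated with classical heat semigroup}---but adds one substantive ingredient the paper omits. After showing $B_x\subset\widetilde{B_j}$ for $x\in B_j$, the paper invokes Lemma~\ref{mixed weak type inequality for variation operators associated with classical heat semigroup} directly on the $x$-dependent input $f\chi_{B_x}\nu$, writing
\[
\lambda\sum_j\mu_j\nu_j\Bigl(\Bigl\{x:\tfrac{\mathcal{V}_\varrho(e^{-t\Delta})(f\chi_{B_x}\nu)(x)}{\nu(x)}>\lambda\Bigr\}\Bigr)\lesssim\sum_j\int_{B_x}|f|\mu_j\nu_j\,;
\]
this is not literally a consequence of that lemma, which applies only to a fixed function. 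Your $T_1^j/T_2^j$ split removes the ambiguity: the main term carries the fixed function $f\chi_{\kappa B_j}$, so Lemma~\ref{mixed weak type inequality for variation operators associated with classical heat semigroup} applies cleanly, while the correction $T_2^j$ is controlled pointwise by $M(f\chi_{\kappa B_j}\nu)$ (no dyadic annulus sum is actually needed---for $y\in\kappa B_j\setminus B_x$ with $x\in B_j$ one has $|x-y|\sim\rho(x_j)$, so a single-scale bound suffices), and the mixed weak-type inequality for $M$ finishes the job. One small gap on your side: from $\mu\in A_1^{\kappa\rho,\mathrm{loc}}$ and $\kappa B_j\in\mathcal{B}_{\kappa\rho}$ alone you cannot yet conclude $\mu|_{\kappa B_j}\in A_1(\kappa B_j)$, since an arbitrary sub-ball $B\subset\kappa B_j$ need not belong to $\mathcal{B}_{\kappa\rho}$. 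The paper handles this with a two-case analysis (subcritical $r_B\le\tau_0\rho(x_B)$ versus $|B|\sim|\widetilde{B_j}|$); equivalently you may invoke Remark~\ref{remark 2.4}(2) with a larger dilation $\beta>\kappa$ chosen so that every sub-ball of $\kappa B_j$ lands in $\mathcal{B}_{\beta\rho}$.
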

\begin{proof}
Let $\{B_j:B(x_j,\rho(x_j))\}_{j\in \mathbb{Z}^+}$ be the family of balls given in Lemma \ref{covering lemma}. Let $\widetilde{B_j}:=\tau_0 B_j$, where $\tau_0=1+C_02^{\frac{N_0}{N_0+1}}$ with $N_0$ and $C_0$ are given as in \eqref{critical radius function}. In the following, we claim that for any $\mu\in A_1^{\rho,loc}$ and $\nu\in A_\infty^{\rho,loc}$, there exists a $s>1$ such that
$$\mu|_{\widetilde{B_j}}\in A_1(\widetilde{B_j}),~\nu|_{\widetilde{B_j}}\in A_s(\widetilde{B_j}).$$
For any ball $B:=B(x_B,r_B)\subset\widetilde{B_j}$, we prove it by dividing it into two cases:

{\bf Case 1. $r_B\leq\tau_0\rho(x_B)$:}\, Using $[\mu]_{A_1^{\rho,loc}}\sim[\mu]_{A_1^{\tau_0\rho,loc}}$, we have
\begin{align*}
\Big(\frac{1}{|B|}\int_B\mu(y)dy\Big)\Big(\inf_{B\ni y}\mu(y)\Big)^{-1}\leq[\mu]_{A_1^{\tau_0\rho,loc}}\sim[\mu]_{A_1^{\rho,loc}}.
\end{align*}

{\bf Case 2. $r_B>\tau_0\rho(x_B)$:}\, In this case, $B(x_B,\tau_0\rho(x_B))\subset B\subset\widetilde{B_j}$. Since $|x_j-x_B|<\tau_0\rho(x_j)$, $\rho(x_B)\sim\rho(x_j)$, which yields that $|B|\sim|\widetilde{B_j}|$. Therefore,
\begin{align*}
\Big(\frac{1}{|B|}\int_B\mu(y)dy\Big)\Big(\inf_{B\ni y}\mu(y)\Big)^{-1}&\lesssim\Big(\frac{1}{|\widetilde{B_j}|}\int_{\widetilde{B_j}}\mu(y)dy\Big)
\Big(\inf_{\widetilde{B_j}\ni y}\mu(y)\Big)^{-1}\\
&\leq[\mu]_{A_1^{\tau_0\rho,loc}}\sim[\mu]_{A_1^{\rho,loc}}.
\end{align*}
Combined with Case 1 and Case 2, we conclude that $\mu|_{\widetilde{B_j}}\in A_1(\widetilde{B_j})$. While for $\nu\in A_{\infty}^{\rho,loc}$, there is a $s>1$ such that $\nu\in A_{s}^{\rho,loc}$. Similarly to the proof of $\mu|_{\widetilde{B_j}}\in A_1(\widetilde{B_j})$, we can also prove that $\nu|_{\widetilde{B_j}}\in A_s(\widetilde{B_j})$. This proves the claim.

Next, we return to the proof of lemma. By Lemma \ref{extension}, $\mu|_{\widetilde{B_j}}$ and $\nu|_{\widetilde{B_j}}$ admit extensions $\mu_j$ and $\nu_j$ on $\mathbb{R}^n$, respectively, which satisfy $\mu_j\in A_1$ and $\nu_j\in A_s\subseteq A_\infty$. In addition, for any $j\in \mathbb{Z}^+$, $x\in B_j$ and $y\in B_x$, observe that
\begin{align*}
|y-x_j|&\leq|y-x|+|x-x_j|\leq\rho(x)+\rho(x_j)\\
&\leq C_0\rho(x_j)\Big(1+\frac{\rho(x_j)}{\rho(x_j)}\Big)^{\frac{N_0}{N_0+1}}+\rho(x_j)=\tau_0\rho(x_j),
\end{align*}
which implies that $B_x\subseteq\widetilde{B_j}$ for any $x\in B_j$ and $j\in\mathbb{Z}^+$. Then, by Lemma \ref{covering lemma} and Lemma \ref{mixed weak type inequality for variation operators associated with classical heat semigroup}, we deduce that
\begin{align*}
&\lambda\mu\nu\Big(\Big\{x\in\mathbb{R}^n:\frac{\mathcal{V}_\varrho(e^{-t\Delta})(f\chi_{B_x}\nu)(x)}
{\nu(x)}>\lambda\Big\}\Big)\\
&\quad\leq\lambda\sum_j\mu_j\nu_j\Big(\Big\{x\in B_j:\frac{\mathcal{V}_\varrho(e^{-t\Delta})(f\chi_{B_x}\nu)(x)}
{\nu(x)}>\lambda\Big\}\Big)\\
&\quad\leq\lambda\sum_j\mu_j\nu_j\Big(\Big\{x\in\mathbb{R}^n:\frac{\mathcal{V}_\varrho(e^{-t\Delta})
(f\chi_{B_x}\nu)(x)}{\nu(x)}>\lambda\Big\}\Big)\\
&\quad\lesssim\sum_j\int_{B_x}|f(x)|\mu_j(x)\nu_j(x)dx
\leq\sum_j\int_{\widetilde{B_j}}|f(x)|\mu(x)\nu(x)dx\\
&\quad\lesssim\int_{\mathbb{R}^n}|f(x)|\mu(x)\nu(x)dx.
\end{align*}
\end{proof}

The next lemma tells us that the conditions of weights in Theorem \ref{mixed weak type inequality for variation of heat semigroup in Sch} are stronger than ones in Lemma \ref{mixed weak type inequality for local}.
\begin{lemma}\label{strong condition implies weak condition}
If $\mu\in A_{1}^{\rho}$, $\nu\in A_\infty^{\rho}(\mu)$, then $\mu\in A_{1}^{\rho,loc}$, $\nu\in A_\infty^{\rho,loc}$.
\end{lemma}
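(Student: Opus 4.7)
The plan is to prove the two containments separately using a common observation: on a ball $B=B(x_0,r)\in\mathcal{B}_\rho$ (so $r\leq\rho(x_0)$), the cube $Q$ of side-length $2r$ centered at $x_0$ satisfies $B\subset Q$, $|Q|\sim|B|$, and
\begin{equation*}
\psi_\theta(Q)=\Big(1+\frac{2r}{\rho(x_0)}\Big)^\theta\leq 3^\theta,
\end{equation*}
so the factor $\psi_\theta$ appearing in the global weight classes becomes harmless on such $Q$. I will also use the standard equivalence between the ball- and cube-formulations of the local weight classes, which only affects implicit constants.

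For the first assertion, choose $\theta\geq 0$ with $\mu\in A_1^{\rho,\theta}$, and for $B\in\mathcal{B}_\rho$ produce $Q$ as above. Since $B\subset Q$ one has $\essi_B\mu\geq\essi_Q\mu$, and the $A_1^{\rho,\theta}$ condition applied to $Q$ yields
\begin{equation*}
\frac{\mu(B)}{|B|}\leq\frac{\mu(Q)}{|B|}\sim\frac{\mu(Q)}{|Q|}\leq[\mu]_{A_1^{\rho,\theta}}\psi_\theta(Q)\,\essi_Q\mu\lesssim\essi_B\mu,
\end{equation*}
which is precisely the $A_1^{\rho,loc}$ condition.

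For the second assertion, choose $p\geq 1$ and $\theta_1\geq 0$ with $\nu\in A_p^{\rho,\theta_1}(\mu)$, and enlarge $\theta$ if necessary so that also $\mu\in A_1^{\rho,\theta}$. I will show $\nu\in A_p^{\rho,loc}$, which gives $\nu\in A_\infty^{\rho,loc}$. Assume first $p>1$; given $B\in\mathcal{B}_\rho$, take the cube $Q$ as above and set $m_Q:=\mu(Q)/|Q|$. Using $\mu(Q)^{-1}\int_Q h\,d\mu=m_Q^{-1}\langle h\mu\rangle_Q$ and $\psi_\theta(Q)\lesssim 1$, the weighted $A_p$ condition on $Q$ reads
\begin{equation*}
m_Q^{-p}\,\langle\nu\mu\rangle_Q\,\langle\nu^{1-p'}\mu\rangle_Q^{p-1}\lesssim 1.
\end{equation*}
Meanwhile the $A_1^{\rho,\theta}$ condition applied to $Q$ gives $\mu\geq c\,m_Q$ a.e.~on $Q$, so for any non-negative $g$,
\begin{equation*}
\langle g\mu\rangle_Q\geq c\,m_Q\,\langle g\rangle_Q.
\end{equation*}
Applying this with $g=\nu$ and $g=\nu^{1-p'}$ and combining with the previous display yields
\begin{equation*}
c^p\,\langle\nu\rangle_Q\,\langle\nu^{1-p'}\rangle_Q^{p-1}\leq m_Q^{-p}\,\langle\nu\mu\rangle_Q\,\langle\nu^{1-p'}\mu\rangle_Q^{p-1}\lesssim 1.
\end{equation*}
Because $B\subset Q$ and $|B|\sim|Q|$, the corresponding $B$-averages are controlled by the $Q$-averages, so $\nu\in A_p^{\rho,loc}$. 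The case $p=1$ is completely parallel, with $\essi_Q\nu$ playing the role of $\langle\nu^{1-p'}\rangle_Q^{p-1}$.

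No serious obstacle is anticipated. The key point is that the $A_1^{\rho,\theta}$ hypothesis on $\mu$ furnishes the pointwise lower bound $\mu\geq c\,m_Q$ on the cubes of interest, which lets one peel off the $\mu$-factors in the $d\mu$-weighted $A_p$ condition cleanly and convert it into an ordinary $A_p$ condition. The only routine care required is in handling the ball-cube conversion and the tracking of implicit constants.
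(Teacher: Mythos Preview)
Your proof is correct and substantially more direct than the paper's. The paper establishes $\nu\in A_\infty^{\rho,loc}$ by first proving a chain of structural facts --- $\mu\nu\in A_\infty^\rho$, $\mu^{-1}\in RH_\infty^\rho$, $RH_\infty^\rho\subseteq A_\infty^\rho$, and a multiplicative stability result --- and then invoking the factorization $\mu\nu=\omega_1\omega_2$ with $\omega_1\in A_1^\rho$, $\omega_2\in RH_\infty^\rho$ (Lemma~\ref{property of reverse holder class}) to write $\nu=\omega_1(\omega_2\mu^{-1})$ and conclude $\nu\in A_\infty^\rho$. Your argument bypasses all of this: the single observation that $\mu\in A_1^{\rho,\theta}$ forces the pointwise lower bound $\mu\geq c\,m_Q$ on critical cubes $Q$ lets you strip the $\mu$-weighting out of the $A_p^{\rho,\theta_1}(\mu)$ condition in one step, landing directly in $A_p^{\rho,loc}$. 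Your route is shorter and uses no machinery beyond the definitions; the paper's route, while longer, records along the way some independently useful facts about $RH_\infty^\rho$ and the interaction of $A_\infty^\rho$ with reverse H\"older classes.
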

\begin{proof}
It is stated in Remark 2.6 that $A_{1}^{\rho}\subseteq A_{1}^{\rho,loc}$. Hence, it suffices to show $\nu\in A_\infty^{\rho,loc}$. We first claim that $(1)$ $\mu\nu\in A_\infty^\rho$; $(2)$ $\mu^{-1}\in RH_\infty^\rho$; $(3)$ $RH_\infty^\rho\subseteq A_\infty^\rho$; $(4)$ $\tilde{\mu}\in A_\infty^\rho$, $\tilde{\omega}\in RH_{\infty}^\rho\Rightarrow\tilde{\mu}\tilde{\omega}\in A_\infty^\rho$.

Under the hypotheses of lemma, for every cube $Q$ and every measurable set $E$ contained in $Q$, there exist $\theta_1,\theta_2\geq0$ and $\epsilon,\delta>0$ such that
\begin{align*}
\frac{\mu\nu(E)}{\mu\nu(Q)}\lesssim\Big(1+\frac{r_Q}{\rho(x_Q)}\Big)^{\theta_1}
\Big(\frac{\mu(E)}{\mu(Q)}\Big)^\epsilon\lesssim\Big(1+\frac{r_Q}{\rho(x_Q)}\Big)^
{\theta_1+\epsilon\theta_2}\Big(\frac{|E|}{|Q|}\Big)^{\delta\epsilon},
\end{align*}
which implies that $\mu\nu\in A_\infty^{\rho}$.

Fix a cube $Q\subseteq\mathbb{R}^n$, by H\"{o}lder's inequality,
\begin{align*}
|Q|=\int_Q\mu(x)^{-1/2}\mu(x)^{1/2}\leq\Big(\int_Q\mu(x)^{-1}dx\Big)^{1/2}
\Big(\int_Q\mu(x)dx\Big)^{1/2}.
\end{align*}
Then there exists a $\theta\geq0$ such that
\begin{align*}
\esss_{x\in Q}\mu(x)^{-1}&\lesssim\frac{|Q|\psi_\theta(Q)}{\mu(Q)}\\
&\leq\frac{|Q|\psi_\theta(Q)}{|Q|^2}\Big(\int_Q\mu(x)^{-1}dx\Big)\\
&=\psi_\theta(Q)\langle\mu^{-1}\rangle_Q,
\end{align*}
this shows $\mu^{-1}\in RH_\infty^\rho$.

Let $E$ be any measurable subset of $Q$. For any $\omega\in RH_\infty^\rho$,
\begin{align*}
\frac{\omega(E)}{\omega(Q)}&=\frac{1}{\omega(Q)}\int_Q\chi_E\omega\\
&\leq\frac{|E|}{\omega(Q)}\esss_{x\in Q}\omega(x)\\
&\lesssim\psi_\theta(Q)\frac{|E|}{|Q|},
\end{align*}
so $\omega\in A_\infty^\rho$, which yields that $RH_\infty^\rho\subseteq A_\infty^\rho$.

In view of $(3)$, we have $\tilde{\mu}\in A_q^{\rho,\theta_3}$ and $\tilde{\omega}\in A_r^{\rho,\theta_4}$ for some $\theta_3, \theta_4\geq0$ and $q,r>1$. By $\tilde{\omega}\in RH_\infty^\rho$, there is a $\theta_5\geq0$ such that
\begin{align*}
\frac{1}{|Q|}\int_Q\tilde{\mu}(x)\tilde{\omega}(x)dx&\leq\frac{\esss_{x\in Q}\tilde{\omega}(x)}{|Q|}\int_Q\tilde{\mu}(x)dx\\
&\lesssim\psi_{\theta_5}(Q)\Big(\frac{1}{|Q|}\int_Q\tilde{\mu}(x)dx\Big)
\Big(\frac{1}{|Q|}\int_Q\tilde{\omega}(x)dx\Big).
\end{align*}
On the other hand, by H\"{o}lder's inequality,
\begin{align*}
&\Big(\frac{1}{|Q|}\int_Q(\tilde{\mu}(x)\tilde{\omega}(x))^{1-(q+r-1)'}dx\Big)^{(q+r-1)-1}\\
&\quad\leq\Big(\frac{1}{|Q|}\int_Q\tilde{\mu}(x)^{[1-(q+r-1)'][1+\frac{r-1}{q-1}]}dx\Big)
^{\frac{(q+r-1)-1}{1+(r-1)/(q-1)}}\\
&\qquad\times\Big(\frac{1}{|Q|}\int_Q\tilde{\omega}(x)^{[1-(q+r-1)'][1+\frac{r-1}{q-1}]'}dx\Big)
^{\frac{(q+r-1)-1}{(1+(r-1)/(q-1))'}}\\
&\quad=\Big(\frac{1}{|Q|}\int_Q\tilde{\mu}(x)^{1-q'}dx\Big)^{q-1}
\Big(\frac{1}{|Q|}\int_Q\tilde{\omega}(x)^{1-r'}dx\Big)^{r-1}.
\end{align*}
Thus,
\begin{align*}
&\Big(\frac{1}{|Q|}\int_Q\tilde{\mu}(x)\tilde{\omega}(x)dx\Big)
\Big(\frac{1}{|Q|}\int_Q(\tilde{\mu}(x)\tilde{\omega}(x))^{1-(q+r-1)'}dx\Big)^{(q+r-1)-1}\\
&\quad\lesssim\psi_{\theta_3+\theta_4+\theta_5}(Q)\Big(\frac{1}{|Q|}\int_Q\tilde{\mu}(x)dx\Big)
\Big(\frac{1}{|Q|}\int_Q\tilde{\mu}(x)^{1-q'}dx\Big)^{q-1}\psi_{\theta_3}(Q)^{-1}\\
&\qquad\times\Big(\frac{1}{|Q|}\int_Q\tilde{\omega}(x)dx\Big)
\Big(\frac{1}{|Q|}\int_Q\tilde{\omega}(x)^{1-r'}dx\Big)^{r-1}\psi_{\theta_4}(Q)^{-1}\\
&\quad\leq\psi_{\theta_3+\theta_4+\theta_5}(Q)[\tilde{\mu}]_{A_q^{\rho,\theta_3}}
[\tilde{\omega}]_{A_r^{\rho,\theta_4}},
\end{align*}
which implies that $\tilde{\mu}\tilde{\omega}\in A_{q+r-1}^{\rho,\theta_3+\theta_4+\theta_5}$. So, $\tilde{\mu}\tilde{\omega}\in A_\infty^\rho$.

Now we return to prove this lemma. Since $\mu\nu\in A_\infty^\rho$ by $(1)$, there exist $s,p>1$ such that $\mu\nu\in A_p^\rho\cap RH_s^\rho$, we have $\mu\nu=\omega_1\omega_2$, where $\omega_1\in A_1^\rho$ and $\omega_2\in RH_\infty^\rho$ by Lemma \ref{property of reverse holder class}. As a consequence of Lemma \ref{property of reverse holder class} and $(2)$, we have $\omega_2\mu^{-1}\in RH_\infty^\rho$. In virtue of $(4)$ and $\omega_1\in A_1^\rho$, $\nu=\omega_1\omega_2\mu^{-1}\in A_\infty^\rho\subseteq A_\infty^{\rho,loc}$. This completes the proof of Lemma \ref{strong condition implies weak condition}.
\end{proof}

Now, we are in the position to prove Theorem \ref{mixed weak type inequality for variation of heat semigroup in Sch}.
\begin{proof}[Proof of Theorem \ref{mixed weak type inequality for variation of heat semigroup in Sch}]
For any $f\in C_c^\infty(\mathbb{R}^n)$ and $x\in\mathbb{R}^n$, we write $f=f_1+f_2$, where $f_1:=f\chi_{B_x}$ with $B_x:=B(x,\rho(x))$. Then the sublinearity of $\mathcal{V}_\varrho(e^{-t\mathcal{L}})$ yields that
\begin{align}\label{f1+f2}
&\mu\nu\Big(\Big\{x\in\mathbb{R}^n:\frac{\mathcal{V}_\varrho(e^{-t\mathcal{L}})(f\nu)(x)}{\nu(x)}>
\lambda\Big\}\Big)\\
&\quad\leq\mu\nu\Big(\Big\{x\in\mathbb{R}^n:\frac{\mathcal{V}_\varrho(e^{-t\mathcal{L}})(f_1\nu)(x)}
{\nu(x)}>\frac{\lambda}{2}\Big\}\Big)\nonumber\\
&\qquad+
\mu\nu\Big(\Big\{x\in\mathbb{R}^n:\frac{\mathcal{V}_\varrho(e^{-t\mathcal{L}})
(f_2\nu)(x)}{\nu(x)}>\frac{\lambda}{2}\Big\}\Big)\nonumber.
\end{align}

We claim that for some $\sigma>0$ given as in Theorem \ref{mixed weak type of CZ and HL}, there holds
\begin{align}\label{f1 pointwise}
\mathcal{V}_\varrho(e^{-t\mathcal{L}})(f_1)(x)\leq\mathcal{V}_\varrho(e^{-t\Delta})(f_1)(x)+
M^{\rho,\sigma}f(x).
\end{align}
and
\begin{align}\label{f2 pointwise}
\mathcal{V}_\varrho(e^{-t\mathcal{L}})(f_2)(x)\lesssim M^{\rho,\sigma} f(x).
\end{align}
If we are done, by \eqref{f1 pointwise}-\eqref{f2 pointwise}, Theorem \ref{mixed weak type of CZ and HL} and Lemmas \ref{mixed weak type inequality for local}, \ref{strong condition implies weak condition}, we have
\begin{align*}
&\mu\nu\Big(\Big\{x\in\mathbb{R}^n:\frac{\mathcal{V}_\varrho(e^{-t\mathcal{L}})(f\nu)(x)}{\nu(x)}>
\lambda\Big\}\Big)\\
&\quad\lesssim\mu\nu\Big(\Big\{x\in\mathbb{R}^n:\frac{M^{\rho,\sigma}(f\nu)(x)}{\nu(x)}>
\frac{\lambda}{4}\Big\}\Big)
+\mu\nu\Big(\Big\{x\in\mathbb{R}^n:\frac{\mathcal{V}_\varrho(e^{-t\Delta})(f_1\nu)(x)}{\nu(x)}>
\frac{\lambda}{4}\Big\}\Big)\\
&\quad\lesssim\lambda^{-1}\int_{\mathbb{R}^n}|f(x)|\mu(x)\nu(x)dx.
\end{align*}

In what follows, we are going to prove the claim. We split $\mathcal{V}_\rho(e^{-t\mathcal{L}})(f_1)$ into two terms:
\begin{align}\label{f1 fenjie}
\mathcal{V}_\varrho(e^{-t\mathcal{L}})(f_1)(x)\leq\mathcal{V}_\varrho(e^{-t\Delta})(f_1)(x)+
\mathcal{V}_\varrho(e^{-t\mathcal{L}}-e^{-t\Delta})(f_1)(x).
\end{align}
Compared to \eqref{f1 pointwise}, it remains to estimate $\mathcal{V}_\varrho(e^{-t\mathcal{L}}-e^{-t\Delta})(f_1)$. By the perturbation formula in \cite{Dz}, there exists $\delta\in(0,\infty)$ such that for any $f\in C_c^\infty(\Rn)$ and $x\in\Rn$,
\begin{align*}
\Big|\Big(t\frac{\partial}{\partial t}e^{-t\mathcal{L}}-t\frac{\partial}{\partial t}e^{-t\Delta}\Big)(f_1)(x)\Big|\lesssim
\int_{B_x}t^{-n/2}\Big(\frac{\sqrt{t}}{\sqrt{t}+\rho(x)}\Big)
^\delta e^{-c|x-y|^2/t}|f(y)|dy.
\end{align*}
From this, we then have
\begin{align}\label{L1+L2}
&\mathcal{V}_\rho(e^{-t\mathcal{L}}-e^{-t\Delta})(f_1)(x)\\
&\quad=\sup_{\{t_i\}\downarrow0}
\Big(\sum_{i=1}^\infty|e^{-t_i\mathcal{L}}f_1(x)-e^{-t_{i+1}\mathcal{L}}f_1(x)-
e^{-t_i\triangle}f_1(x)+e^{-t_{i+1}\triangle}f_1(x)|^\varrho\Big)^{1/\varrho}\nonumber\\
&\quad\leq\sup_{\{t_i\}\downarrow0}
\sum_{i=1}^\infty|e^{-t_i\mathcal{L}}f_1(x)-e^{-t_{i+1}\mathcal{L}}f_1(x)-
e^{-t_i\triangle}f_1(x)+e^{-t_{i+1}\triangle}f_1(x)|\nonumber\\
&\quad=\sup_{\{t_i\}\downarrow0}
\sum_{i=1}^\infty\Big|\int_{t_{i+1}}^{t_i}t\frac{\partial}{\partial t}(e^{-t\mathcal{L}}-e^{-t\triangle})(f_1)(x)
\frac{dt}{t}\Big|\nonumber\\
&\quad\leq\int_{0}^{\infty}
\Big|\Big(t\frac{\partial}{\partial t}e^{-t\mathcal{L}}-t\frac{\partial}{\partial t}e^{-t\Delta}\Big)(f_1)(x)\Big|\frac{dt}{t}\nonumber\\
&\quad\lesssim\int_{0}^{\rho(x)^2}\int_{B_x}t^{-n/2}\Big(\frac{\sqrt{t}}{\sqrt{t}+\rho(x)}\Big)
^\delta
e^{-c|x-y|^2/t}|f(y)|dy\frac{dt}{t}\nonumber\\
&\qquad+\int_{\rho(x)^2}^{\infty}\int_{B_x}t^{-n/2}\Big(\frac{\sqrt{t}}{\sqrt{t}+\rho(x)}\Big)
^\delta
e^{-c|x-y|^2/t}|f(y)|dy\frac{dt}{t}\nonumber\nonumber\\
&\quad=:L_1(x)+L_2(x)\nonumber.
\end{align}
A direct computation shows that
\begin{align}\label{L2}
L_2(x)&\leq\int_{\rho(x)^2}^{\infty}\int_{B_x}t^{-n/2}|f(y)|dy\frac{dt}{t}\\
&\lesssim\int_{\rho(x)^2}^{\infty}\frac{2^\sigma}{(1+\rho(x)/\rho(x))^\sigma|B_x|}
\int_{B_x}(\rho(x)/\sqrt{t})^n|f(y)|dy\frac{dt}{t}\nonumber\\
&\leq M^{\rho,\sigma} f(x)\int_{\rho(x)^2}^{\infty}(\rho(x)/\sqrt{t})^n\frac{dt}{t}\sim M^{\rho,\sigma} f(x).\nonumber
\end{align}
To deal with the term $L_1(x)$, we first claim that for $t<\rho(x)^2$,
\begin{align*}
\int_{B_x}t^{-n/2}e^{-c|x-y|^2/t}|f(y)|dy\lesssim M^{\rho,\sigma} f(x).
\end{align*}
Assume we have done, then
\begin{align}\label{L1}
L_1(x)&\lesssim M^{\rho,\sigma} f(x)\int_{0}^{\rho(x)^2}\Big(\frac{\sqrt{t}}{\sqrt{t}+\rho(x)}\Big)^\delta\frac{dt}{t}\\
&\leq M^{\rho,\sigma} f(x)\int_{0}^{\rho(x)^2}\frac{t^{\delta/2-1}}{\rho(x)^\delta}dt\sim M^{\rho,\sigma} f(x)\nonumber.
\end{align}
Now we return to prove the claim. Using $t<\rho(x)^2$ and dyadic annuli-type decomposition, we have
\begin{align*}
&\int_{B_x}t^{-n/2}e^{-c|x-y|^2/t}|f(y)|dy\\
&\quad\leq\int_{\Rn}t^{-n/2}e^{-c|x-y|^2/t}|f(y)|dy\\
&\quad\leq\int_{B(x,\sqrt{t})}t^{-n/2}|f(y)|dy+\sum_{k=0}^{\infty}
\int_{B(x,2^{k+1}\sqrt{t})\backslash B(x,2^k\sqrt{t})}t^{-n/2}e^{-c2^{2k}}|f(y)|dy\\
&\quad=\sum_{k=0}^{\infty}
2^{(k+1)n}e^{-c2^{2k}}\frac{\Big(1+\frac{2^{k+1}\sqrt{t}}{\rho(x)}\Big)
^\sigma}{2^{(k+1)n}t^{n/2}
\Big(1+\frac{2^{k+1}\sqrt{t}}{\rho(x)}\Big)^\sigma}\int_{B(x,2^{k+1}\sqrt{t})}|f(y)|dy\\
&\qquad+\frac{\Big(1+\frac{\sqrt{t}}{\rho(x)}\Big)^\sigma}{t^{n/2}
\Big(1+\frac{\sqrt{t}}{\rho(x)}\Big)^\sigma}\int_{B(x,\sqrt{t})}|f(y)|dy\\
&\quad\lesssim M^{\rho,\sigma}f(x).
\end{align*}
This proves the claim. Then we prove \eqref{f1 pointwise} by combining with \eqref{f1 fenjie}-\eqref{L1}.

Finally, we prove \eqref{f2 pointwise}. A similar estimate of \eqref{L1+L2}, we have
\begin{align*}
\mathcal{V}_\varrho(e^{-t\mathcal{L}})(f_2)(x)\leq\int_{0}^{\infty}\Big|t\frac{\partial}{\partial t}e^{-t\mathcal{L}}f_2(x)\Big|\frac{dt}{t}.
\end{align*}
Applying
$$\Big|\frac{\partial}{\partial t}p_t(x,y)\Big|\lesssim\frac{1}{t^{n/2+1}}e^{-c|x-y|^2/t}\Big(1+\frac{\sqrt{t}}
{\rho(x)}+\frac{\sqrt{t}}{\rho(y)}\Big)^{-N}, N>0,$$
we deduce that
\begin{align*}
&\int_{0}^{\infty}\Big|t\frac{\partial}{\partial t}e^{-t\mathcal{L}}f_2(x)\Big|\frac{dt}{t}\\
&\quad=\int_{0}^{\infty}\Big|\int_{|x-y|>\rho(x)}t\frac{\partial}{\partial t}p_{t}(x,y)f(y)dy\Big|\frac{dt}{t}\\
&\quad\lesssim\int_{0}^{\infty}t^{-n/2}(1+\sqrt{t}/\rho(x))^{-N}\int_{|x-y|>\rho(x)}
e^{-c|x-y|^2/t}|f(y)|dy\frac{dt}{t}\\
&\quad\leq\int_{0}^{\infty}t^{-n/2}(1+\sqrt{t}/\rho(x))^{-N}\sum_{j=0}^{\infty}
e^{-c[2^j\rho(x)]^2/t}\int_{B(x,2^{j+1}\rho(x))\backslash B(x,2^{j}\rho(x))}|f(y)|dy\frac{dt}{t}\\
&\quad\lesssim\int_{0}^{\infty}t^{-n/2}(1+\sqrt{t}/\rho(x))^{-N}\sum_{j=0}^{\infty}
\Big(\frac{\sqrt{t}}{2^j\rho(x)}\Big)^A2^{j\sigma}(2^j\rho(x))^n\Big(1+\frac{2^{j+1}\rho(x)}
{\rho(x)}\Big)^{-\sigma}\\
&\qquad\times\frac{1}{|B(x,2^{j+1}\rho(x))|}\int_{B(x,2^{j+1}\rho(x))}|f(y)|dy\frac{dt}{t}\\
&\quad\lesssim M^{\rho,\sigma} f(x)\int_{0}^{\infty}(1+\sqrt{t}/\rho(x))^{-N}(\sqrt{t}/\rho(x))^{A-n}\sum_{j=0}^{\infty}
2^{-j(A-n-\sigma)}\frac{dt}{t}\\
&\quad\lesssim M^{\rho,\sigma} f(x)\int_{0}^{\infty}(1+\sqrt{t}/\rho(x))^{-N}(\sqrt{t}/\rho(x))^{A-n}\frac{dt}{t}\\
&\quad=M^{\rho,\sigma} f(x)\int_{0}^{\infty}\frac{s^{A-n-1}}{(1+s)^N}ds\lesssim M^{\rho,\sigma} f(x),
\end{align*}
where $A,N\in(0,\infty)$ such that $A>n+\sigma$ and $N>A-n$. This proves \eqref{f2 pointwise}.
\end{proof}

\section{Characterizations of new classes of weights via restricted weak type estimates of maximal operators}
In this section, we are going to show the characterizations of $A_{p,q,\alpha}^{\rho,\theta,\mathcal{R}}$ via restricted weak type estimates of maximal operators. We will employ the ideas of Roure Perdices in \cite{R}, but the classes of weights we deal with are larger than ones in \cite{R}. Before we prove the main theorems, we first show that \eqref{restricted weak type for maximal function} in fact implies $1/p-1/q\leq\alpha/n$.
\begin{theorem}\label{th5.1}
Let $0<\alpha<n$ and $\mu$, $\nu$ be weights. If $M_{\alpha}^{\rho,\theta}$ is bounded from $L^{p,1}(\mu)$ to $L^{q,\infty}(\nu)$, then $1/p-1/q\leq\alpha/n$.
\end{theorem}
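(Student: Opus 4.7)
The plan is to extract the exponent condition by testing the boundedness hypothesis on indicators of small cubes, then invoking Lebesgue differentiation to peel off the weights. First, I would fix an arbitrary cube $Q = Q(x_Q, r_Q)$ and apply the hypothesis to $f = \chi_Q$. On the one hand, taking $Q$ itself in the supremum defining $M_\alpha^{\rho,\theta}$ gives the pointwise bound
\begin{equation*}
M_\alpha^{\rho,\theta}(\chi_Q)(x) \geq \frac{|Q|^{\alpha/n}}{\psi_\theta(Q)^{1-\alpha/n}}, \qquad x \in Q,
\end{equation*}
so the definition of the Lorentz quasi-norm yields
\begin{equation*}
\|M_\alpha^{\rho,\theta}(\chi_Q)\|_{L^{q,\infty}(\nu)} \gtrsim \frac{|Q|^{\alpha/n}}{\psi_\theta(Q)^{1-\alpha/n}}\,\nu(Q)^{1/q}.
\end{equation*}
On the other hand, a direct computation with the layer-cake formula gives $\|\chi_Q\|_{L^{p,1}(\mu)} = p\,\mu(Q)^{1/p}$. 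Combining these with the assumed $L^{p,1}(\mu) \to L^{q,\infty}(\nu)$ bound produces the key testing inequality
\begin{equation*}
\frac{|Q|^{\alpha/n}}{\psi_\theta(Q)^{1-\alpha/n}}\,\nu(Q)^{1/q} \lesssim \mu(Q)^{1/p}, \qquad \text{for every cube } Q.
\end{equation*}

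Next, I would specialize to a sequence of cubes shrinking to a well-chosen point. Since $\mu$ and $\nu$ are weights (hence locally integrable and strictly positive almost everywhere), the Lebesgue differentiation theorem guarantees a point $x_0 \in \mathbb{R}^n$ which is a Lebesgue point of both weights and at which $0 < \mu(x_0),\, \nu(x_0) < \infty$. For the family $\{Q_\ell := Q(x_0, \ell)\}_{\ell > 0}$ we have $|Q_\ell| = \ell^n$, $\psi_\theta(Q_\ell) = (1 + \ell/\rho(x_0))^\theta \to 1$ as $\ell \to 0$, and
\begin{equation*}
\mu(Q_\ell) = \mu(x_0)\,\ell^n (1 + o(1)), \qquad \nu(Q_\ell) = \nu(x_0)\,\ell^n (1 + o(1))
\end{equation*}
as $\ell \to 0^+$. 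Substituting into the testing inequality yields
\begin{equation*}
\ell^{\alpha + n/q}\,\nu(x_0)^{1/q}(1+o(1)) \lesssim \ell^{n/p}\,\mu(x_0)^{1/p}(1+o(1)),
\end{equation*}
i.e.\ $\ell^{\alpha + n/q - n/p} \lesssim \mu(x_0)^{1/p}\nu(x_0)^{-1/q}$ for all sufficiently small $\ell > 0$. If the exponent $\alpha + n/q - n/p$ were negative, the left-hand side would blow up as $\ell \to 0^+$, contradicting the finite upper bound. Hence $\alpha + n/q - n/p \geq 0$, which is precisely $1/p - 1/q \leq \alpha/n$.

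The argument is essentially routine scaling; the only mild subtlety is ensuring that one can legitimately let $\ell \to 0$ without the factor $\psi_\theta(Q_\ell)^{1-\alpha/n}$ or the weighted densities misbehaving, but this is handled by choosing $x_0$ to be a simultaneous Lebesgue point of $\mu$ and $\nu$ at which both densities are finite and positive, combined with the fact that $\rho(x_0)$ is a fixed positive number (so $\psi_\theta(Q_\ell) \to 1$). No step is truly difficult.
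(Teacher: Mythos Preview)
Your proposal is correct and follows essentially the same route as the paper: test the boundedness on $f=\chi_Q$, bound $M_\alpha^{\rho,\theta}(\chi_Q)$ from below on $Q$, compute $\|\chi_Q\|_{L^{p,1}(\mu)}=p\,\mu(Q)^{1/p}$, and then shrink $Q$ to a common Lebesgue point of $\mu$ and $\nu$ to force the exponent condition via scaling. The only cosmetic difference is that the paper phrases this as a contradiction (assuming $1/p-1/q>\alpha/n$ and concluding $\nu\equiv 0$), whereas you argue directly that a negative exponent on $\ell$ would make the left side blow up; the content is identical.
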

\begin{proof}
We use a contradiction argument by assuming $1/p-1/q>\alpha/n$. Fix a cube $Q$, let $f=\chi_Q$. Then for $x\in Q$,
\begin{align*}
M_{\alpha}^{\rho,\theta}f(x)\geq\frac{1}{(|Q|\psi_\theta(Q))^{1-\frac{\alpha}{n}}}|Q|=
\frac{|Q|^{\frac{\alpha}{n}}}{\psi_\theta(Q)^{1-\frac{\alpha}{n}}}.
\end{align*}
Then
\begin{align*}
\Big\|\frac{|Q|^{\frac{\alpha}{n}}}{\psi_\theta(Q)^{1-\frac{\alpha}{n}}}\chi_Q\Big\|_{L^{q,\infty}(\nu)}
\leq\|M_{\alpha}^{\rho,\theta}f\|_{L^{q,\infty}(\nu)}\lesssim\|\chi_Q\|_{L^{p,1}(\mu)}.
\end{align*}
Note that
\begin{equation*}
\|\chi_Q\|_{L^{p,1}(\mu)}=p\mu(Q)^{1/p},
\end{equation*}
we then have
\begin{equation*}
\Big(\frac{1}{|Q|}\int_Q\nu(x)dx\Big)^{1/q}\lesssim|Q|^{1/p-1/q-\alpha/n}\psi_\theta(Q)^{1-\alpha/n}
\Big(\frac{1}{|Q|}\int_Q\mu(x)dx\Big)^{1/p}.
\end{equation*}

Now let $x_0$ be any Lebesgue point of $\mu$ and $\nu$. Let $Q_j$ be a sequence of cubes centered at $x_0$ that shrink to $x_0$. Since $1/p-1/q>\alpha/n$ and $0<\alpha<n$, by the Lebesgue differential theorem,
\begin{equation*}
|Q|^{1/p-1/q-\alpha/n}\psi_\theta(Q)^{1-\alpha/n}
\Big(\frac{1}{|Q|}\int_Q\mu(x)dx\Big)^{1/p}\rightarrow0,
\end{equation*}
which shows that $\nu=0$. This deduces a contradiction and completes the proof of Theorem \ref{th5.1}.
\end{proof}

\begin{proof}[Proof of Theorem \ref{characterization restricted weak type}]
We first prove the necessity of Theorem \ref{characterization restricted weak type}. Fix a cube $Q$ and $\epsilon>1$. By \cite[Theorem 1.4.16.(v)]{Gra}, there is a non-negative function $f$ such that $\|f\|_{L^{p,1}(\mu)}=1$ and
\begin{equation*}
\frac{1}{\epsilon p}\|\chi_Q\mu^{-1}\|_{L^{p',\infty}(\mu)}\leq\int_Qf(\chi_Q\mu^{-1})\mu=\int_Qf.
\end{equation*}
According to the definition of $M_{\alpha}^{\rho,\theta}$,
\begin{equation*}
\Big(\frac{1}{|Q|^{1-\frac{\alpha}{n}}}\Big(1+\frac{r_Q}{\rho(x_Q)}\Big)^{-\theta(1-\frac{\alpha}{n})}
\int_Q|f|\Big)\chi_Q\leq M_{\alpha}^{\rho,\theta}f.
\end{equation*}
Then
\begin{equation*}
\Big(\frac{1}{\epsilon p|Q|^{1-\frac{\alpha}{n}}}\Big(1+\frac{r_Q}{\rho(x_Q)}\Big)^{-\theta(1-\frac{\alpha}{n})}
\|\chi_Q\mu^{-1}\|_{L^{p',\infty}(\mu)}\Big)\chi_Q\leq M_{\alpha}^{\rho,\theta}f.
\end{equation*}
From this and \eqref{restricted weak type for maximal function}, it follows that
\begin{equation*}
\nu(Q)^{1/q}\frac{\|\chi_Q\mu^{-1}\|_{L^{p',\infty}(\mu)}}{
(|Q|\psi_\theta(Q))^{1-\frac{\alpha}{n}}}\leq c\epsilon p.
\end{equation*}
Therefore
\begin{equation*}
[\mu,\nu]_{A_{p,q,\alpha}^{\rho,\theta,\mathcal{R}}}\leq pc<\infty.
\end{equation*}

Next, we turn to prove the sufficiency. Let $\lambda>0$ and $K>0$. Denote
\begin{equation*}
E_\lambda:=\{x\in\mathbb{R}^n:M_{\alpha}^{\rho,\theta}f(x)>\lambda\},\quad E_{\lambda,K}:=B(0,K)\cap E_\lambda.
\end{equation*}
By the definition of $M_{\alpha}^{\rho,\theta}$, for any $x\in E_{\lambda,K}$, there is a sequence $\{x_j\}_j$ such that $E_{\lambda,K}\subset\bigcup_j Q_{x_j}$, where $\{Q_{x_j}\}$ is bounded overlapping. Then by H\"{o}lder's inequality,
\begin{align*}
\nu(E_{\lambda,K})&\leq\sum_j\nu(Q_{x_j})\\
&\leq\lambda^{-q}\sum_j\nu(Q_{x_j})
\Big(\Big[\Big(1+\frac{r_{Q_{x_j}}}{\rho(Q_{x_j})}\Big)^\theta|Q_{x_j}|\Big]
^{\frac{\alpha}{n}-1}\int_{Q_{x_j}}|f(y)|dy\Big)^q\\
&\leq\lambda^{-q}\sum_j\nu(Q_{x_j})
\Big[\Big(1+\frac{r_{Q_{x_j}}}{\rho(Q_{x_j})}\Big)^\theta|Q_{x_j}|\Big]
^{q(\frac{\alpha}{n}-1)}\|\chi_{Q_{x_j}}\mu^{-1}\|_{L^{p',\infty}}^q
\|f\chi_{Q_{x_j}}\|_{L^{p,1}(\mu)}^q\\
&\leq\lambda^{-q}[\mu,\nu]_{A_{p,q,\alpha}^{\rho,\theta,\mathcal{R}}}^q\sum_j
\|f\chi_{Q_{x_j}}\|_{L^{p,1}(\mu)}^q\\
&\lesssim\lambda^{-q}[\mu,\nu]_{A_{p,q,\alpha}^{\rho,\theta,\mathcal{R}}}^q\|f\|_{L^{p,1}(\mu)}^q,
\end{align*}
where we use Lemma \ref{summation lemma} in the last inequality. Thus, let $K\rightarrow\infty$, the monotone convergence theorem leads to the desired result.
\end{proof}

\begin{proof}[Proof of Theorem \ref{another characterization restricted weak type}]
Our proof of Theorem \ref{another characterization restricted weak type} goes the following:
\begin{equation*}
(1)\Rightarrow(2)\Rightarrow(3)\Rightarrow(4)\Rightarrow(1).
\end{equation*}
It is obvious that $(1)\Rightarrow(2)$. Next, we show that $(2)$ implies $(3)$. Fix a cube $Q$ and a measurable set $P$ with $0<\mu(P)<\infty$. Observe that
\begin{equation*}
\frac{|P\cap Q|}{(\psi_\theta(Q)|Q|)^{1-\frac{\alpha}{n}}}\chi_Q\leq M_\alpha^{\rho,\theta}(\chi_P).
\end{equation*}
Then the condition (2) yields that
\begin{equation*}
\nu(Q)^{1/q}\frac{|P\cap Q|}{(\psi_\theta(Q)|Q|)^{1-\frac{\alpha}{n}}}\lesssim\mu(P)^{1/p},
\end{equation*}
which implies that
\begin{equation*}
\|\mu,v\|_{A_{p,q,\alpha}^{\rho,\theta,\mathcal{R}}}<\infty.
\end{equation*}

Since $(4)\Rightarrow(1)$ is a consequence of Theorem \ref{characterization restricted weak type}, it remains to prove $(3)\Rightarrow(4)$. Fix a cube $Q$. It is not hard to check that
\begin{equation*}
\sup_{0<\mu(P)<\infty}\frac{|P\cap Q|}{\mu(P)^{1/p}}=\sup_{P\subset Q}\frac{|P |}{\mu(P)^{1/p}}.
\end{equation*}
Hence, for any $P\subset Q$,
\begin{equation*}
|P|\leq\|\mu,\nu\|_{A_{p,q,\alpha}^{\rho,\theta,\mathcal{R}}}
\frac{\mu(P)^{1/p}(\psi_\theta(Q)|Q|)^{1-\frac{\alpha}{n}}}{v(Q)^{1/q}}.
\end{equation*}
Let $P=\{x\in Q:\mu(x)^{-1}>y\}$. Then
\begin{align}\label{still valid for p=1}
y\mu(P)&=\int_Py\mu(x)dx\leq\int_P\mu(x)\mu(x)^{-1}dx\\
&=|P|\leq\|\mu,\nu\|_{A_{p,q,\alpha}^{\rho,\theta,\mathcal{R}}}
\frac{\mu(P)^{1/p}(\psi_\theta(Q)|Q|)^{1-\frac{\alpha}{n}}}{\nu(Q)^{1/q}}.
\nonumber
\end{align}

For $p>1$, we get that
\begin{align*}
\nu(Q)^{1/q}\frac{\|\chi_Q\mu^{-1}\|_{L^{p',\infty}(\mu)}}
{(\psi_\theta(Q)|Q|)^{1-\frac{\alpha}{n}}}
\leq\|\mu,\nu\|_{A_{p,q,\alpha}^{\rho,\theta,\mathcal{R}}},
\end{align*}
which yields that $(\mu,\nu)\in A_{p,q,\alpha}^{\rho,\theta,\mathcal{R}}$.

If $p=1$. For $y<\|\chi_Q\mu^{-1}\|_{L^{\infty}(\mu)}$, using \eqref{still valid for p=1},
\begin{equation*}
y\leq\|\mu,\nu\|_{A_{1,q,\alpha}^{\rho,\theta,\mathcal{R}}}
\frac{(\psi_\theta(Q)|Q|)^{1-\frac{\alpha}{n}}}{\nu(Q)^{1/q}}.
\end{equation*}
Letting $y$ increase to $\|\chi_Q\mu^{-1}\|_{L^{\infty}(\mu)}$, we obtain
\begin{align*}
\nu(Q)^{1/q}\frac{\|\chi_Q\mu^{-1}\|_{L^{\infty}(\mu)}}
{(\psi_\theta(Q)|Q|)^{1-\frac{\alpha}{n}}}
\leq\|\mu,\nu\|_{A_{1,q,\alpha}^{\rho,\theta,\mathcal{R}}},
\end{align*}
which also shows that $(\mu,\nu)\in A_{1,q,\alpha}^{\rho,\theta,\mathcal{R}}$.
\end{proof}

\section{Quantitative restricted weak type estimate for variation operator}

This section is devoted to proving Theorem \ref{quantitative restricted weak type}. Note that when $\mathcal{L}$ is a Schr\"{o}dinger operator, the kernel of $e^{-t\mathcal{L}}$ satisfies \eqref{upper bound}, and it follows from \cite{BetFHR} that $\mathcal{V}_\varrho(e^{-t\mathcal{L}})$ is of weak type $(1,1)$ and bounded on $L^2(\mathbb{R}^n)$. Therefore, Theorem \ref{quantitative restricted weak type} can be regarded as a special case of the following general theorem.

\begin{theorem}\label{th6.1}
Let $\varrho>2$, $\theta\geq0$ and $\rho$ be a critical radius function. Let $\mathcal{L}$ be defined as in Section 3. Suppose that $\mathcal{V}_\varrho(e^{-t\mathcal{L}})$ is of weak type $(1,1)$ and bounded on $L^2(\mathbb{R}^n)$. Then for $1<p<\infty$ and $\omega\in A_{p}^{\rho,\theta,\mathcal{R}}$, we have
$$\|\mathcal{V}_\varrho(e^{-t\mathcal{L}})f\|_{L^{p,\infty}(\omega)}
\lesssim[\omega]_{A_{p}^{\rho,\theta,\mathcal{R}}}^{p+1}\|f\|_{L^{p,1}(\omega)}.$$
\end{theorem}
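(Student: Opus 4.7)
The plan is to follow the template of the proof of Theorem \ref{general quantitative Ap}, adapted to the restricted weak type setting. Using Lemma \ref{covering lemma}, I would decompose $\mathbb{R}^n$ into the critical balls $\{B_j\}$ with bounded overlap, and for each $j$ select a dyadic cube $Q_j$ with $B_j\subset Q_j\subset\alpha_0 B_j$. Then the pointwise estimates \eqref{global estimate} and \eqref{local estimate} already established in the proof of Theorem \ref{general quantitative Ap} give, for $x\in B_j$,
$$\mathcal{V}_\varrho(e^{-t\mathcal{L}})f(x)\lesssim M^{\rho,\sigma} f(x)+\mathcal{A}_{\mathcal{S}_j}^{\rho,N}(f\chi_{3Q_j})(x),$$
so summing the level-set measures over $j$ and invoking $\sum_j\chi_{B_j}\lesssim1$ reduces the claim to two restricted weak type $(p,p)$ inequalities for $\omega\in A_p^{\rho,\theta,\mathcal{R}}$: one for $M^{\rho,\sigma}$ and one for the sparse operator $\mathcal{A}_{\mathcal{S}}^{\rho,N}$, with $\sigma$ and $N$ chosen sufficiently large relative to $\theta$.

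The maximal-operator piece is immediate from Theorem \ref{characterization restricted weak type}, applied with $p=q$ and $\alpha=0$. Since $A_p^{\rho,\theta,\mathcal{R}}\subseteq A_p^{\rho,\sigma,\mathcal{R}}$ for $\sigma\ge\theta$ with $[\omega]_{A_p^{\rho,\sigma,\mathcal{R}}}\le[\omega]_{A_p^{\rho,\theta,\mathcal{R}}}$, one obtains
$$\|M^{\rho,\sigma} f\|_{L^{p,\infty}(\omega)}\lesssim[\omega]_{A_p^{\rho,\theta,\mathcal{R}}}\|f\|_{L^{p,1}(\omega)},$$
which, after summing over $j$ using the bounded overlap, easily yields the required control of the global term.

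The main obstacle is the restricted weak type bound for the sparse operator. I would attack it by duality through the Lorentz pairing $L^{p,\infty}(\omega)$--$L^{p',1}(\omega)$, reducing the problem to an estimate of the form
$$\sum_{Q\in\mathcal{S}_j}\psi_N(Q)^{-1}\,\frac{1}{|3Q|}\int_{3Q}|f|\,\int_Q|g|\,\omega\lesssim[\omega]_{A_p^{\rho,\theta,\mathcal{R}}}^{p+1}\|f\chi_{3Q_j}\|_{L^{p,1}(\omega)}\|g\|_{L^{p',1}(\omega)}.$$
The natural route is a principal-cubes construction stopping on $\omega$-averages of $f$ and $g$; on each stopping region, the innermost sum is controlled by a generalized H\"older in Lorentz spaces together with the characterization from Theorem \ref{another characterization restricted weak type}, which supplies the inequality $|P\cap Q|\omega(P)^{-1/p}\lesssim[\omega]_{A_p^{\rho,\theta,\mathcal{R}}}|Q|\psi_\theta(Q)\omega(Q)^{-1/p}$. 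The factors $\psi_\theta(Q)^{-1}$ introduced along the way are absorbed by $\psi_N(Q)^{-1}$ provided $N$ is chosen large enough relative to $\theta$. Tracking the $[\omega]$-dependence is the delicate part: one factor should come from the Lorentz H\"older estimate, and $p$ further factors from the geometric summation across stopping generations, producing the announced exponent $p+1$.

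Once both pieces are in hand, the final assembly mirrors the end of the proof of Theorem \ref{general quantitative Ap}: sum the level-set bounds across $j$, use Lemma \ref{summation lemma} (thanks to $\sum_j\chi_{3Q_j}\lesssim1$) to collapse $\sum_j\|f\chi_{3Q_j}\|_{L^{p,1}(\omega)}^p$ into $\|f\|_{L^{p,1}(\omega)}^p$, and translate the resulting distributional inequality back into the claimed $L^{p,\infty}(\omega)$ norm bound. Virtually all genuine difficulty is concentrated in the sparse-operator step; the remainder is a direct transfer of the architecture of Section 3 to the Lorentz-space setting.
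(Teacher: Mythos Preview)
Your overall architecture matches the paper's proof: the decomposition via Lemma~\ref{covering lemma}, the pointwise bounds \eqref{global estimate} and \eqref{local estimate}, the treatment of the global piece by Theorem~\ref{characterization restricted weak type}, and the final assembly via Lemma~\ref{summation lemma} are all exactly what the paper does.

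The genuine difference is in the sparse-operator step. The paper does \emph{not} run a principal-cubes or stopping-time argument. Its proof of Theorem~\ref{restricted weak type estimate of sparse operator} is shorter and more direct: from $|E_Q|\sim|Q|$ and the equivalent characterization in Theorem~\ref{another characterization restricted weak type} one gets immediately
\[
\psi_\theta(Q)^{-p}\,\frac{\omega(3Q)}{\omega(E_Q)}\lesssim \|\omega\|_{A_p^{\rho,\theta,\mathcal{R}}}^p,
\]
and the elementary inclusion $Q\subset Q(x,2r_Q)\subset 3Q$ gives $\bigl(\omega(3Q)^{-1}\int_Q g\,\omega\bigr)\chi_Q\le M_\omega^c g$, the centered maximal function with respect to the measure $\omega\,dx$. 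Pairing with $g\in L^{p',1}(\omega)$ and inserting both facts yields
\[
\int \mathcal{A}_{\mathcal{S}}^{\rho,N}f\cdot g\,\omega
\lesssim \|\omega\|_{A_p^{\rho,\theta,\mathcal{R}}}^p\int M^{\rho,\theta}f\cdot M_\omega^c g\,\omega
\le \|\omega\|_{A_p^{\rho,\theta,\mathcal{R}}}^p\,\|M^{\rho,\theta}f\|_{L^{p,\infty}(\omega)}\|M_\omega^c g\|_{L^{p',1}(\omega)},
\]
provided $N\ge(p+1)\theta$. One factor of $[\omega]_{A_p^{\rho,\theta,\mathcal{R}}}$ then comes from Theorem~\ref{characterization restricted weak type}, while $M_\omega^c$ is bounded on $L^{p',1}(\omega)$ with a constant independent of $\omega$ (Besicovitch plus interpolation). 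The exponent $p+1$ thus falls out transparently, with no Carleson-type summation needed. Your principal-cubes route may well be workable, but it is more involved in the Lorentz setting and your exponent accounting (``$p$ factors from geometric summation across stopping generations'') is not yet substantiated; the paper's argument sidesteps all of that.
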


In order to prove Theorem \ref{th6.1}, we first establish the following quantitative restricted weak type estimate for sparse operator associated with critical radius function.
\begin{theorem}\label{restricted weak type estimate of sparse operator}
Let $1<p<\infty$ and $\omega\in A_{p}^{\rho,\theta,\mathcal{R}}$ with $\theta\geq0$. Then
\begin{equation*}
\|\mathcal{A}_{\mathcal{S}}^{\rho,N}f\|_{L^{p,\infty}(\omega)}\lesssim
[\omega]_{A_{p}^\mathcal{\rho,\theta,\mathcal{R}}}^{p+1}\|f\|_{L^{p,1}(\omega)},
\end{equation*}
provided that $N\geq(p+1)\theta$.
\end{theorem}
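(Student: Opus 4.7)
The plan is to combine the duality formula for $L^{p,\infty}(\omega)$ with the Lorentz--space version of H\"older's inequality triggered by the $A_p^{\rho,\theta,\mathcal{R}}$ condition, and then dispatch the resulting bilinear form via a principal--cube stopping time. By density of simple functions in $L^{p,1}(\omega)$ together with Lemma \ref{summation lemma}, it suffices to test against indicators; that is, to establish, for every measurable set $E$ with $0<\omega(E)<\infty$, the bound
\begin{equation*}
\|\mathcal{A}_{\mathcal{S}}^{\rho,N}\chi_E\|_{L^{p,\infty}(\omega)}\lesssim [\omega]_{A_p^{\rho,\theta,\mathcal{R}}}^{p+1}\omega(E)^{1/p}.
\end{equation*}
Using the standard layer--cake duality $\|g\|_{L^{p,\infty}(\omega)}\sim \sup_{0<\omega(F)<\infty}\omega(F)^{-1/p'}\int_F g\,d\omega$ (valid since $p>1$), the task becomes the bilinear form estimate
\begin{equation*}
\sum_{Q\in\mathcal{S}}\frac{|E\cap 3Q|}{|3Q|}\,\psi_N(Q)^{-1}\,\omega(F\cap Q)\lesssim [\omega]_{A_p^{\rho,\theta,\mathcal{R}}}^{p+1}\,\omega(E)^{1/p}\omega(F)^{1/p'},
\end{equation*}
uniformly in $F$ with $0<\omega(F)<\infty$.

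The first factor of $[\omega]_{A_p^{\rho,\theta,\mathcal{R}}}$ is produced by H\"older's inequality in Lorentz spaces with respect to $d\omega$: writing $|E\cap 3Q|=\int \chi_E\,\chi_{3Q}\omega^{-1}\,d\omega$ and invoking $L^{p,1}(\omega)\times L^{p',\infty}(\omega)\hookrightarrow L^{1}(\omega)$, one obtains
\begin{equation*}
|E\cap 3Q|\le \|\chi_E\|_{L^{p,1}(\omega)}\,\|\chi_{3Q}\omega^{-1}\|_{L^{p',\infty}(\omega)}\le p\,[\omega]_{A_p^{\rho,\theta,\mathcal{R}}}\,\frac{\omega(E)^{1/p}\,|3Q|\,\psi_\theta(3Q)}{\omega(3Q)^{1/p}},
\end{equation*}
where I used $\|\chi_E\|_{L^{p,1}(\omega)}=p\,\omega(E)^{1/p}$ and the defining property of $A_p^{\rho,\theta,\mathcal{R}}$. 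Since $\psi_\theta(3Q)\lesssim \psi_\theta(Q)$, the problem reduces to proving
\begin{equation*}
\sum_{Q\in\mathcal{S}}\frac{\psi_\theta(Q)}{\psi_N(Q)}\,\frac{\omega(F\cap Q)}{\omega(3Q)^{1/p}}\lesssim [\omega]_{A_p^{\rho,\theta,\mathcal{R}}}^{p}\,\omega(F)^{1/p'}.
\end{equation*}

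The remaining factor $[\omega]^p$ is extracted by a principal--cube stopping time driven by the $\omega$-averages of $\chi_F$. Stratify $\mathcal{S}=\bigcup_k \mathcal{S}_k$ with $\mathcal{S}_k:=\{Q\in\mathcal{S}:\,2^{-k-1}<\omega(F\cap Q)/\omega(3Q)\le 2^{-k}\}$; extract for each $k$ the maximal (hence disjoint) cubes $\{Q_j^k\}_j\subset\mathcal{S}_k$; and use the disjointness of the sparse sets $E_Q\subset Q$ to convert the discrete sum into an integral controlled by an $\omega$-weighted Hardy--Littlewood maximal function of $\chi_F$. Applying Theorem \ref{characterization restricted weak type} with $\alpha=0$ to this maximal function yields, at each generation, a bound $\sum_{j}\omega(Q_j^k)^{1/p'}\lesssim [\omega]^{p-1}\,2^{k/p'}\omega(F)^{1/p'}$ (after absorbing the internal sparse sub--cubes). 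Summing the resulting geometric series in $k$ cancels the $2^{-k}$ and delivers the stated $[\omega]^{p}$ factor.

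The main obstacle is the polynomial growth factor $\psi_\theta$: it enters once in the H\"older step above, once more per generation of the stopping time, and up to $p$ times through iterated invocations of the $A_p^{\rho,\theta,\mathcal{R}}$ inequality. The hypothesis $N\ge (p+1)\theta$ is designed exactly so that the built--in damping $\psi_N^{-1}$ pointwise dominates the $(p+1)$ accumulated $\psi_\theta$ factors, ensuring that the geometric series converge uniformly in $Q$ and producing the clean dependence $[\omega]_{A_p^{\rho,\theta,\mathcal{R}}}^{p+1}$. All other steps are, by contrast, routine once the bookkeeping of $\psi_\theta$ versus $\psi_N$ is made explicit.
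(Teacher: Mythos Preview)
Your reduction to a bilinear form via duality is fine, but applying H\"older's inequality cube-by-cube to $|E\cap 3Q|$ discards the localization in $E$ too early and leaves a sum that is \emph{not} controllable. After your H\"older step the task becomes (ignoring $\psi$-factors and constants)
\[
\sum_{Q\in\mathcal S}\frac{\omega(F\cap Q)}{\omega(3Q)^{1/p}}\lesssim \omega(F)^{1/p'},
\]
and this inequality is false in general, already for $\omega\equiv 1$ and $1<p<2$. In one dimension take $F=[0,1]$ and let $\mathcal S$ be the $\tfrac12$-sparse family obtained by keeping, inside each retained interval, its first and third quarters: at generation $k$ there are $2^k$ intervals of length $4^{-k}$, each contained in $F$, so that level contributes $\sim 2^{k}\cdot 4^{-k}/4^{-k/p}=2^{k(1-2/p')}$, which diverges for $p'>2$. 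Thus no stopping-time repair is possible; the loss occurred at the H\"older step. Relatedly, your displayed estimate $\sum_{j}\omega(Q_j^k)^{1/p'}\lesssim 2^{k/p'}\omega(F)^{1/p'}$ cannot follow from disjointness of the $Q_j^k$, since $t\mapsto t^{1/p'}$ is concave and $\sum_j a_j^{1/p'}\ge(\sum_j a_j)^{1/p'}$ goes the wrong way; Theorem~\ref{characterization restricted weak type} (which concerns $M^{\rho,\theta}$, not the $\omega$-weighted maximal function governing your strata) does not supply it either.

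The paper avoids this by postponing H\"older until after the sum has been linearized into an integral. One writes
\[
\int\mathcal A_{\mathcal S}^{\rho,N}f\cdot g\,\omega
=\sum_{Q\in\mathcal S}\langle|f|\rangle_{3Q}\,\psi_N(Q)^{-1}\Bigl(\frac{1}{\omega(3Q)}\int_Q g\omega\Bigr)\omega(E_Q)\cdot\frac{\omega(3Q)}{\omega(E_Q)},
\]
bounds $\omega(3Q)/\omega(E_Q)\lesssim\|\omega\|_{A_p^{\rho,\theta,\mathcal R}}^{p}\psi_1(Q)^{p\theta}$ directly from the weight condition applied to $E_Q\subset 3Q$ with $|E_Q|\sim|Q|$, and observes that $\omega(3Q)^{-1}\int_Q g\omega\le\inf_Q M_\omega^c g$. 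With $N\ge(p+1)\theta$ the accumulated $\psi$-factors leave $\psi_\theta(3Q)^{-1}$, and disjointness of the $E_Q$ converts the sum into $\int M^{\rho,\theta}f\cdot M_\omega^c g\,d\omega$. Only then is H\"older applied, as $\|M^{\rho,\theta}f\|_{L^{p,\infty}(\omega)}\|M_\omega^c g\|_{L^{p',1}(\omega)}$; Theorem~\ref{characterization restricted weak type} handles the first factor and the dimension-only bound for the centered weighted maximal function (via Besicovitch) handles the second.
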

\begin{proof}
Since $\mathcal{S}$ is a sparse family, for any $Q\in\mathcal{S}$, there exists $E_Q\subset Q$ such that $|E_Q|\sim|Q|$ and $\{E_Q\}_{Q\in\mathcal{S}}$ is a disjoint family. Then
\begin{equation}\label{new classes weights appear}
\Big(1+\frac{r_Q}{\rho(x_Q)}\Big)^{-p\theta}\frac{\omega(3Q)}{\omega(E_Q)}
\sim\frac{|E_Q|^p}{|3Q|^p
\Big(1+\frac{r_Q}{\rho(x_Q)}\Big)^{p\theta}}
\frac{\omega(3Q)}{\omega(E_Q)}\leq\|\omega\|_{A_{p}^{\rho,\theta,\mathcal{R}}}^p.
\end{equation}
Observe that for any $x\in Q$,
\begin{equation*}
Q\subset Q(x,2r_Q)\subset3Q,
\end{equation*}
which implies that
\begin{equation}\label{dominated by center HL}
\Big(\frac{1}{\omega(3Q)}\int_Qg\omega\Big)\chi_Q(x)\leq
\Big(\frac{1}{\omega(Q(x,2r_Q))}\int_{Q(x,2r_Q)}g\omega\Big)\chi_Q(x)\leq
\inf_Q M_\omega^c(g)(x),
\end{equation}
where
\begin{align*}
M_\omega^cg(x):=\sup_{Q_x}\frac{1}{|Q_x|}\int_{Q_x}|g(x)|\omega(x)dx,
\end{align*}
and $Q_x$ denotes any cube centered at $x$.

By duality,
\begin{equation*}
\|\mathcal{A}_{\mathcal{S}}^{\rho,N}f\|_{L^{p,\infty}(\omega)}=
\underset{\|g\|_{L^{p',1}(\omega)=1}}{\sup_{0\leq g\in L^{p',1}(\omega)}}\int_{\mathbb{R}^n}
\mathcal{A}_{\mathcal{S}}^{\rho,N}f(x)g(x)\omega(x)dx.
\end{equation*}
Since
\begin{align*}
&\int_{\mathbb{R}^n}\mathcal{A}_{\mathcal{S}}^{\rho,N}(f)(x)g(x)\omega(x)dx\\
&\quad=\sum_{Q\in\mathcal{S}}\langle|f|\rangle_{3Q}\Big(1+\frac{r_Q}{\rho(x_Q)}\Big)^{-N}
\int_Qg(x)\omega(x)dx\\
&\quad=\sum_{Q\in\mathcal{S}}\langle|f|\rangle_{3Q}\Big(1+\frac{r_Q}{\rho(x_Q)}\Big)^{-N}
\Big(\frac{1}{\omega(3Q)}\int_Qg(x)\omega(x)dx\Big)\omega(E_Q)\frac{\omega(3Q)}{\omega(E_Q)}.
\end{align*}
By \eqref{new classes weights appear}, \eqref{dominated by center HL}, Theorem \ref{characterization restricted weak type}, H\"{o}lder's inequality in Lorentz space and taking $N\geq(p+1)\theta$, we arrive at
\begin{align*}
&\int_{\mathbb{R}^n}\mathcal{A}_{\mathcal{S}}^{\rho,N}(f)(x)g(x)\omega(x)dx\\
&\quad\lesssim\|\omega\|_{A_{p}^{\rho,\theta,\mathcal{R}}}^p\sum_{Q\in\mathcal{S}}
\langle|f|\rangle_{3Q}\Big(1+\frac{r_Q}{\rho(x_Q)}\Big)^{p\theta-N}
\Big(\frac{1}{\omega(3Q)}\int_Qg(x)\omega(x)dx\Big)\omega(E_Q)\\
&\quad\lesssim\|\omega\|_{A_{p}^{\rho,\theta,\mathcal{R}}}^p\sum_{Q\in\mathcal{S}}
\langle|f|\rangle_{3Q}\Big(1+\frac{3r_Q}{\rho(x_Q)}\Big)^{-\theta}
\Big(\frac{1}{\omega(3Q)}\int_{Q}g(x)\omega(x)dx\Big)\omega(E_Q)\\
&\quad\leq\|\omega\|_{A_{p}^{\rho,\theta,\mathcal{R}}}^p\sum_{Q\in\mathcal{S}}\int_{E_Q}
\langle|f|\rangle_{3Q}\Big(1+\frac{3r_Q}{\rho(x_Q)}\Big)^{-\theta}
\Big(\frac{1}{\omega(3Q)}\int_Qg(x)\omega(x)dx\Big)\omega(x)dx\\
&\quad\leq\|\omega\|_{A_{p}^{\rho,\theta,\mathcal{R}}}^p
\int_{\mathbb{R}^n}M^{\rho,\theta}f(x)M_\omega^cg(x)\omega(x)dx\\
&\quad\leq\|\omega\|_{A_{p}^{\rho,\theta,\mathcal{R}}}^p\|M^{\rho,\theta}f\|_{L^{p,\infty}(\omega)}
\|M_\omega^cg\|_{L^{p',1}(\omega)}\\
&\quad\lesssim[\omega]_{A_{p}^{\rho,\theta,\mathcal{R}}}^{p+1}\|f\|_{L^{p,1}(\omega)}.
\end{align*}
\end{proof}

Now, we prove Theorem \ref{th6.1}.
\begin{proof}[Proof of Theorem \ref{th6.1}]
Let $B_j$, $Q_j$ be given as in the proof of Theorem \ref{quantitative Ap}. For any $\lambda>0$, by \eqref{global estimate} and \eqref{local estimate}, we get that
\begin{align}\label{M1+M2}
&\lambda\omega\Big(\Big\{x\in\mathbb{R}^n:\mathcal{V}_\varrho(e^{-t\mathcal{L}})f(x)>\lambda\Big\}
\Big)^{1/p}\\
&\quad\leq\lambda\omega \Big(\Big\{x\in\bigcup_j B_j:\mathcal{V}_\varrho(e^{-t\mathcal{L}})
(f\chi_{3Q_j})(x)>\lambda/2\Big\}\Big)^{1/p}\nonumber\\
&\qquad+\lambda\omega\Big(\Big\{x\in\bigcup_j B_j:\mathcal{V}_\varrho(e^{-t\mathcal{L}})
(f\chi_{(3Q_j)^c})(x)>\lambda/2\Big\}\Big)^{1/p}\nonumber\\
&\quad\lesssim\lambda\Big(\sum_j\omega(\{x\in B_j:\mathcal{A}_{\mathcal{S}_j}^{\rho,N}
(f\chi_{3Q_j})(x)>\lambda/2\})\Big)^{1/p}\nonumber\\
&\qquad+\lambda\Big(\sum_j\omega(\{x\in B_j:M^{\rho,\theta}
f(x)>\lambda/2\})\Big)^{1/p}\nonumber\\
&\quad=:M_1+M_2.\nonumber
\end{align}

First, we estimate $M_1$. By Theorem \ref{restricted weak type estimate of sparse operator} with $N\geq(p+1)\theta$ and Lemma \ref{summation lemma}, we derive that
\begin{align}\label{M1}
M_1&\leq\Big(\sum_j\lambda^p\omega(\{x\in\mathbb{R}^n:
\mathcal{A}_{\mathcal{S}_j}^{\rho,N}(f\chi_{3Q_j})(x)>\lambda/2\}\Big)^{1/p}\\
&\lesssim\Big(\sum_j[\omega]_{A_{p}^{\rho,\theta,\mathcal{R}}}^{(p+1)p}\|f\chi_{3 Q_j}\|_{L^{p,1}(\omega)}^p\Big)^{1/p}\nonumber\\
&\lesssim[\omega]_{A_{p}^{\rho,\theta,\mathcal{R}}}^{(p+1)}\|f\|_{L^{p,1}(\omega)}.\nonumber
\end{align}

Next, we take care of $M_2$. Utilizing Theorem \ref{characterization restricted weak type} and Lemma \ref{covering lemma}, we obtain
\begin{align*}
M_2&=\lambda\Big(\sum_j\int_{\{x\in\mathbb{R}^n:M^{\rho,\theta}f(x)>\lambda/2\}}\chi_{B_j}(x)\omega(x)dx
\Big)^{1/p}\\
&\lesssim\lambda\Big(\int_{\{x\in\mathbb{R}^n:M^{\rho,\theta}f(x)>\lambda/2\}}\omega(x)dx\Big)^{1/p}
\\
&\lesssim[\omega]_{A_{p}^{\rho,\theta,\mathcal{R}}}\|f\|_{L^{p,1}(\omega)}.
\end{align*}
This, together with \eqref{M1+M2} and \eqref{M1}, implies the desired conclusion and completes the proof of Theorem \ref{th6.1}.
\end{proof}

\subsection*{Declaration of competing interest}
The authors declare that there is no competing interest.

\end{document}